\crefname{equation}{}{}
\newtheorem{theorem}{Theorem}[section]
\newtheorem{corollary}[theorem]{Corollary}
\newtheorem{conjecture}[theorem]{Conjecture}
\newtheorem*{conjecture*}{Conjecture}
\theoremstyle{definition}
\theoremstyle{remark}
\newtheorem*{remark}{Remark}
\numberwithin{equation}{section}
\newcommand{\N}{\mathbb N}
\DeclareSymbolFont{cyrletters}{OT2}{wncyr}{m}{n}
\DeclareMathSymbol{\Sha}{\mathalpha}{cyrletters}{"58}
\newcommand{\cM}{\mathcal M}
\newcommand{\cN}{\mathcal N}
\newcommand{\F}{\mathbb F}
\newcommand{\Q}{\mathbb Q}
\newcommand{\Z}{\mathbb Z}
\newcommand\remove[1]{}
\title[ the divisibility and indivisibility
of class numbers of quadratic fields ]{
	%Survey on divisibility of class numbers of quadratic fields and Iizuka conjecture
	A collage of results on the divisibility and indivisibility
	of class numbers of quadratic fields}
\author{Srilakshmi Krishnamoorthy}
\author{Sunil Kumar Pasupulati}
\author{muneeswaran R}
\begin{document}
	\maketitle
	\begin{abstract}
		The investigation of the ideal class group  $Cl_K$ of an algebraic number field $K$ is one of the key subjects of inquiry in algebraic number theory since it encodes a lot of arithmetic information about K. There is a considerable amount of research on many topics linked to quadratic field class groups notably intriguing aspect is the divisibility  of  the class numbers. This article discusses a few recent  results on the divisibility of class numbers and the Izuka conjecture. We also discuss the quantitative aspect of the Izuka conjecture.
	\end{abstract}
	\section{introduction}
	The ideal class group $Cl_K$ of the number field $K$ is one of the essential and mysterious objects in algebraic number theory. It is a finite abelian group and the order $h_K$ is called the class number
	of $K$. The divisibility properties of class numbers provide information to understand the class group's structure. Therefore, studying the divisibility properties of class number of the number fields becomes crucial.
	
	The Cohen-Lenstra heuristics \cite{CL} predicts that the portion of quadratic fields whose class numbers are divisible by $n$ is equal to $\frac{1}{n} $.  
	The notion of the class number was introduced in Gauss's Disquisitiones Arithmeticae, written in 1801. He analyzed the theory of binary quadratic forms, which is closely linked to the theory of quadratic fields.
	He conjectured the following :
	\begin{enumerate}[font={\bfseries},label={C\arabic*.}]
		\item There exist infinitely many  quadratic fields of  class number one.\label{classone}
		\item There exists infinitely many real quadratic fields of the form $\Q(\sqrt{p}), p\equiv1\pmod{4}$ of class number 3. 
		\item There are only finitely many imaginary quadratic fields with class number one.\label{ima}
	\end{enumerate}
	The first part (\ref{classone}) of the above conjecture is famously known as the {\it class number one problem.} 
	He even predicted the complete list of imaginary quadratic fields with class number one. They are $\Q(\sqrt{-d})$  where $d =1,2,3,7,11,19,43,67,163$. 
	%very little is known about the class number of real quadratic fields.  
	The third part (\ref{ima}) of the above conjecture was finally proved independently by Alan Baker \cite{AB71}, Heegner and Harold Stark \cite{SH66} in 1966. 
	Chowla and Friedlander \cite{CF} posed the following conjecture.
	\begin{itemize}
		\item If $d=m^2+1$ is a prime with $m>26,$ then the class number of $\Q(\sqrt{d})$ is greater than 1. 
		\item This conjecture says that there are exactly seven real quadratic fields of the form $\Q(\sqrt{m^2+1})$  with class number 1, and they correspond to $m\in\{1, 2, 4, 6, 10, 14, 26\}$.
	\end{itemize}
	
	In 1973, Weinberger  \cite{MR335471}  established that for infinitely many values of $d$, the class number of the number field $\mathbb{Q}(\sqrt{d^{2n}+4})$ is divisible by $n$ if $n$ is odd, and divisible by $\frac{n}{2}$ if $n$ is even. Later, in 1988, Mollin and Williams [2] proved this conjecture under the assumption of the generalized Riemann hypothesis. Chowla [3] also formulated a conjecture analogous to this for a broader family of real quadratic fields. Specifically, he conjectured the following:
	
	\begin{itemize}
		\item Let $d$ be a square-free integer of the form $d=4m^2+1$ for some positive integer $m$. Then there exist exactly six real quadratic fields $\Q(\sqrt{d})$ of class number 1, viz $d\in\{5, 17, 37, 101, 197, 677\}$.
	\end{itemize}
	Yokoi \cite{MR891892} studied the above conjecture and posed the following conjecture on certain real quadratic fields.
	\begin{itemize}
		\item Let $d$ be a square-free integer of the form $d=m^2+4$ for some positive integer $m$. Then there exist exactly six real quadratic fields $\Q(\sqrt{d})$ of class number 1, viz $d\in\{5, 13, 29, 53, 173, 293\}$.
	\end{itemize}

	The investigation of the divisibility of class numbers is fascinating due to its association with the special values of $L$-functions. Dirichlet introduced the Dirichlet class number formula, which establishes a relationship between the class number of quadratic fields and the special values $L(1,\chi)$. Here, $\chi$ represents a  Dirichlet character, and $L(s,\chi)$ denotes the Dirichlet series associated with $\chi$.
	Birch Swinnerton-Dyer conjecture is an elliptic curve analogue of the analytic class number formula.
	For any elliptic curve defined over $\Q$ of rank zero and square-free conductor $N,$ if $p $ divides  $| E(\Q)| $ under
	certain conditions on the Shafarevich-Tate group $| {\Sha}_D|,$ Krishnamoorthy~\cite[Theorem 2.6]{S16} showed that
	$p \mid |\Sha_D| $ if and only if $ p \mid h_K,$ $K=\Q(\sqrt{-D})$.
	
	\section{Quantitative results on divisibility of class numbers  of  quadratic fields}
	For given integer $n\geq1$ and a positive real number $X$, let $\cN^{+}_n(X)$ (respectively, $\cN^{-}_n(X) $)    be the number of square-free $0<d\le X$ such that  class number of ${\Q(\sqrt{d})}$ (respectively, ${\Q(\sqrt{-d})}$) is divisible by $n$. 
	Cohen and Lenstra \cite{CL} conjectured that the probability of a prime $p$ dividing the class number of real quadratic fields  is equal to 
	\[ 1-\prod_{i=2}^\infty\left(1-\frac{1}{p^i}\right), \] and that for  imaginary quadratic fields is equal to 
	\[ 1-\prod_{i=1}^\infty\left(1-\frac{1}{p^i}\right). \]
	In other words, Cohen and Lenstra \cite{CL} conjectured that $\cN^{+}_n(X) \sim  c_n^+X$ and  $\cN^{-}_n(X) \sim  c_n^- X$, for some $c_n^+,  c_n^-$.
	Murty \cite{RM99} obtained the first quantitative results on $\cN^{+}_n(X)$ and  $\cN^{-}_n(X)$. Murty proved that for any $\epsilon>0$, $\cN^{-}_n(X) \gg X^{\frac{1}{2}+\frac{1}{n}}$ and $\cN^{+}_n(X) \gg X^{{\frac{1}{2n}} -\varepsilon}$. After that, several authors improved the above results. The following theorem is the best-known quantitative result for imaginary quadratic fields due to Soundararajan \cite{KS00}.
	\begin{theorem}[Soundararajan \cite{KS00}]
		For all sufficiently large real number $X$, we have
		\[ \cN_n(X)\gg \begin{cases}
			X^{\frac{1}{2}+\frac{2}{n}-\epsilon}   & \text{if } \ n\equiv 0 \pmod{4}\\
			
			X^{\frac{1}{2}+\frac{3}{n+2}-\epsilon}  & \text{otherwise.}
		\end{cases}\]
		
	\end{theorem}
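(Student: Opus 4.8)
The plan is to reduce the statement to a Diophantine counting problem, to produce many admissible $D$ by a two-parameter construction refining the one behind Murty's bound $\cN_n^-(X)\gg X^{1/2+1/n}$, and then to control multiplicities.

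First I would isolate the algebraic input. Suppose $D>0$ is squarefree with $D\equiv 1,2\pmod 4$, so that $\cO_K=\Z[\sqrt{-D}]$ for $K=\Q(\sqrt{-D})$ (the case $D\equiv 3\pmod 4$ is handled the same way after replacing $t^n$ by $4t^n$ and allowing half-integral generators), and suppose there are positive integers $u,v,t$ with $t\ge 2$, $\gcd(t,2uvD)=1$, $\gcd(u,v)=1$ and $u^2+Dv^2=t^n$. Then the ideal $(u+v\sqrt{-D})$ is coprime to its conjugate, and since their product is $(t^n)$ one gets $(u+v\sqrt{-D})=\fb^n$ for an ideal $\fb$ with $N(\fb)=t$. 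If in addition $D>t^{d}$, where $d$ denotes the largest proper divisor of $n$, then $[\fb]$ has order exactly $n$ in $Cl_K$, hence $n\mid h_K$: if $\fb^{e}$ were principal for a proper divisor $e\mid n$, a generator $c+f\sqrt{-D}$ would have norm $t^{e}$, forcing $D\le Df^{2}\le t^{e}\le t^{d}<D$ when $f\ne 0$, while $f=0$ gives $\fb^{e}=(c)=\bar\fb^{\,e}$ and hence $\fb=\cO_K$ by coprimality, contradicting $t\ge 2$. It therefore suffices to count distinct squarefree $D\le X$ for which such a triple $(u,v,t)$ exists.

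Next I would set up the construction. Letting $v$ range over a convenient set of integers coprime to $D$ (primes work well) forces $v^{2}$ to be the square part of $t^{n}-u^{2}$, so the data is essentially the pair $(t,u)$ subject to $v^{2}\mid t^{n}-u^{2}$ and to $D=(t^{n}-u^{2})/v^{2}$ being squarefree of the prescribed congruence class. For $t$ and $v$ in dyadic ranges the congruence $u^{2}\equiv t^{n}\pmod{v^{2}}$ determines $O(1)$ residue classes for $u$, and within each class the number of admissible $u$ with $D\le X$ and $D$ squarefree is evaluated by the unconditional squarefree sieve for the quadratic polynomial $u\mapsto(t^{n}-u^{2})/v^{2}$. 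Summing over $t$ in the range cut out by $D>t^{d}$ and $D\le X$ (roughly $t\le X^{2/n}$ when $4\mid n$) and over $v$ up to about $X^{1/2}$ (forced by $v^{2}<D$), and optimizing, yields $\gg_{\epsilon}X^{1/2+2/n-\epsilon}$ admissible triples in the delicate case $n\equiv 0\pmod 4$; for the other residue classes the order condition and the congruence conditions interact more favourably with the sieve and one gets $\gg_{\epsilon}X^{1/2+3/(n+2)-\epsilon}$ triples.

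Finally I would pass from triples to distinct values of $D$. Writing $r(D)$ for the number of admissible triples producing a given $D$, one has $\sum_{D}r(D)$ equal to the count above, and $\#\{D:r(D)>0\}\ge\bigl(\sum_{D}r(D)\bigr)^{2}/\sum_{D}r(D)^{2}$ by Cauchy--Schwarz, so it is enough to show $\sum_{D}r(D)^{2}$ exceeds the number of triples by at most a factor $X^{o(1)}$. This reduces to bounding the coincidences $v_{2}^{2}(t_{1}^{n}-u_{1}^{2})=v_{1}^{2}(t_{2}^{n}-u_{2}^{2})$, equivalently the solutions of $(v_{2}u_{1}-v_{1}u_{2})(v_{2}u_{1}+v_{1}u_{2})=v_{2}^{2}t_{1}^{n}-v_{1}^{2}t_{2}^{n}$ with the right-hand side nonzero. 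This is the heart of the matter and the step I expect to be the main obstacle: it requires a divisor- and representation-type estimate --- once more a squarefree sieve, now coupled with bounds for the number of representations of $n$-th powers by the relevant binary quadratic forms --- uniform enough to absorb the polylogarithmic and $X^{\epsilon}$ losses without eroding the exponent. Granting such an estimate, the theorem follows.
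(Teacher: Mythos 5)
You should first be aware that this survey does not prove the statement at all: it is quoted (with attribution) from Soundararajan's paper, so there is no proof in the present paper to compare yours against, and I am measuring your sketch against Soundararajan's published argument. Note also that the statement as transcribed here cannot be literally correct: for $n=3$ the second case would give $\cN_3(X)\gg X^{\frac12+\frac35-\epsilon}$, which exceeds the trivial upper bound $X$. In \cite{KS00} the dichotomy is $n\equiv 0\pmod 4$ versus $n\equiv 2\pmod 4$, with odd $n$ handled by passing to $2n$; any correct proof has to respect that.

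Your architecture --- manufacture ideal classes of large order from norm equations $u^2+Dv^2=t^{h}$, count the admissible parameters with a squarefree sieve, and pass to distinct $D$ by Cauchy--Schwarz and a second-moment bound --- is indeed the shape of Soundararajan's proof, and your algebraic lemma (order exactly $n$ when $D>t^{d}$ for $d$ the largest proper divisor of $n$) is correct. The gaps are exactly where the theorem's content lives. First, the exponents do not follow from your setup. Insisting on order exactly $n$ forces $D>t^{n/2}$ when $n$ is even; with $v$ bounded and $D\le X$ this confines $u$ to an interval of length $O(X/t^{n/2})$, and summing over $t$ gives only $O(X^{\frac12+\frac1n})$ pairs --- Murty's bound. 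Soundararajan's gain for $n\equiv 0\pmod 4$ comes from working with the equation $4t^{n/2}=u^2+Dv^2$, i.e.\ constructing classes whose order is a priori only $n/2$ (so that $t$ genuinely ranges up to $X^{2/n}$ with $\asymp X^{1/2}$ choices of $u$ for each $t$) and recovering the last factor of $2$ in $n\mid h_K$ from genus theory and the $2$-part of the class group; nothing in your proposal produces that factor. The case $n\equiv 2\pmod 4$, where $\frac{3}{n+2}>\frac 2n$ for $n\ge 6$, needs a further device in which the auxiliary square $v^2$ carries real quantitative weight rather than the bookkeeping role you give it, and the sentence ``the order condition and the congruence conditions interact more favourably with the sieve'' is not an argument --- I do not see how to extract $\frac{3}{n+2}$ from what you have written. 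Second, the bound on $\sum_D r(D)^2$, equivalently on solutions of $v_2^2(t_1^n-u_1^2)=v_1^2(t_2^n-u_2^2)$, is the technical heart of \cite{KS00}; you correctly identify it as the main obstacle and then assume it, and it is most delicate precisely in the large-$v$ regime that your counting would have to exploit. As it stands, the proposal proves only $\cN_n^{-}(X)\gg X^{\frac12+\frac1n-\epsilon}$.
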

	Using counting of irreducible polynomials, Chakraborty and Ram Murty \cite{CR03} proved that $\cN^+_3(X)\geq X^{\frac{5}{6}}$.
	Byeon and Koh \cite{DE03} improved Chakraborty and Ram Murty's  bound to  $X^{\frac{7}{8}}$.
	Let $\omega(d_K)$ be the number of distinct prime divisors of $d_K$. Using the results of  \cite{DE03} 
	and  \cite{KM00}, Chattopadhyay proved the following.
	$$\#\left\{K=\Q(\sqrt{d_K})/ \  0<d_K\leq x, \  \omega(d_K) \geq \ell+2 , \ 2^\ell.3\mid h_K\right\}\gg X^{\frac{7}{8}}$$
	%the number of real quadratic fields K such that the discriminant $d_K$ is less than or equal to $X$ and divisible by $l+2$ and the class number $h_K$ of $K$ is divisible by $2^l\cdot 3$ is greater than $X^\frac{7}{8}$.

	\remove{Kalyan Chakraborty and Ram Murty\cite{MR1929021} proved a result for the number of quadratic fields with the class number divisible by $3$.
		\begin{theorem}
			Let $N(x,3)=\#\{\Q(\sqrt{d}): 0<d\leq x$ with $3$ divides class number of $\Q(\sqrt{d})\}$. Then $N(x,3)>> x^{\frac{5}{6}}$.
	\end{theorem}}
	
	Heath-Brown \cite{MR2357319} has improved the lower bound to $\cN^+_3(X) \gg X^{\frac{9}{10}-\epsilon}$ and $\cN^-_3(X) \gg X^{\frac{9}{10}-\epsilon}$.  Yu \cite{MR4159812} constructed a family of complex quadratic fields whose class group has a 3-rank of at least 2. Yu also demonstrated that for sufficiently large $X$, there are $>> X^{\frac{1}{2}-\epsilon}$ such fields with discriminant $-D$, where $D \leq X$. In another work \cite{MR1939135}, Yu established that the number of real quadratic fields with discriminant $\leq X$ and class number divisible by $n$ is $>> X^{\frac{1}{n}-\epsilon}$ for any $\epsilon > 0$ and any odd $n$.
	Siyun Lee, Yoonjin Lee, and Jinjoo Yoo \cite{MR4547469} have made advancements in providing effective lower bounds on the number of imaginary quadratic fields with absolute discriminants less than or equal to $X$ and ideal class groups having a 3-rank of at least one, which is $>> X^{\frac{17}{18}}$. Additionally, they determined that the number of imaginary quadratic fields with a 3-rank of at least two is $>> X^{\frac{2}{3}}$.
	
	\section{Indivisibility of the class numbers of quadratic fields}
	
	Several authors studied the indivisibility of class numbers because this information assists us in predicting the structure of the class groups. In this section, we list a few results on the indivisibility of class numbers.
	Gauss proved that class number of $\Q(\sqrt{-p}), \ p\equiv3\pmod{4}$ is odd. From this, we get infinitely many quadratic fields whose class numbers are not divisible by $2$.  Hartung \cite{MR352040} proved that there exists an infinite family of imaginary quadratic fields whose class numbers are not divisible by $3$.
	Daniel and Fouvry \cite{MR1679790} proved that there are infinitely many odd positive fundamental discriminants $d $ and $ d+4$ such that  $\Q(\sqrt{d})$ and $\Q(\sqrt{d+4})$ both have the class number not divisible by $2$. For given odd prime $p$, Kohnen and Ono  \cite{MR1666783} estimated the number of  imaginary quadratic fields whose discriminants are  less than $X$ and the class number is not divisible by $p$   to be 
	$ \gg \frac{\sqrt{x}}{\log{x}}$.
	For any square free integer $t$,   Byeon \cite{MR2073286} proved that there exist infinitely many $d>0$ such that  $3$ does not divide the class numbers 
	of $\Q(\sqrt{d})$ and $\Q(\sqrt{td})$. In fact, he also proved that the set  
	\[ \left\{  d : 3 \nmid  h_{\Q(\sqrt{d})} \  \text{and} \ 3 \nmid h_{\Q(\sqrt{td})} \right\} \]
	has a positive density. Chattopadhyay and Saikia \cite{jaitra&anupam} proved a result on the density of discriminant such that $3$ doesn't divide the class numbers of $\Q(\sqrt{d}), \ \Q(\sqrt{d+t})$ for $t\equiv0\pmod{4}$.
	Lee and  Lee \cite{MR3471189} proved that the density of real quadratic function fields with class numbers not divisible by $\ell$ is $\frac{\ell-2}{\ell-1}$. Wang  \cite{wang} established the existence of infinitely many  quadratic fields whose class numbers are not divisible by $n$ for $n>2$. 
	Wiles \cite{MR3404031} established the existence of imaginary quadratic fields with prescribed local data whose class numbers are indivisible by a given odd prime $\ell$.
	\begin{theorem}
		Let $\ell\geq5$ be prime and let $S_0, \ S_+, \ S_-$ be finite disjoint sets of distinct odd primes not contain  $\ell$ such that the following are true:
		\begin{itemize}
			\item $S_0$ does not contain any primes which are 
			congruent to $1$ modulo  ${\ell}$.
			\item $S_+$ does not contain any primes which are congruent to $-1$ modulo  ${\ell}$.
			\item $S_-$ does not contain any primes which are congruent to $1$ modulo  ${\ell}$ and $-1$ modulo $4$.
		\end{itemize}
		Then there exists a negative fundamental discriminant $d$ such that $\ell$ does not divide the class number of $\Q(\sqrt{d})$. Moreover, the 
		extension $Q(\sqrt{d})$ splits at every prime in $S_+,$ is inert at every prime at $S_-,$ and ramifies at every prime in $S_0$. 
	\end{theorem}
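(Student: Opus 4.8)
The plan is to recognise the condition $\ell\nmid h_{\Q(\sqrt{d})}$ as a non-vanishing modulo $\ell$ of a Fourier coefficient of a half-integral weight modular form, and then to exhibit such a coefficient at an index that forces the prescribed local behaviour. The natural object is Cohen's Eisenstein series $\cH_{\ell+\frac12}=\sum_{N\ge 0}H(\ell,N)q^N$, a holomorphic modular form of weight $\ell+\frac12$ on $\Gamma_0(4)$ (holomorphic because $\ell\ge 5>2$), whose coefficients are the generalized class numbers $H(\ell,N)$. For a negative fundamental discriminant $-D$ with $\ell\nmid D$ one has $H(\ell,D)=L(1-\ell,\chi_{-D})$, and the Kummer congruence for generalized Bernoulli numbers — applicable since $\ell\equiv 1\pmod{\ell-1}$ — together with Dirichlet's analytic class number formula shows that, as soon as $\ell$ is inert in $\Q(\sqrt{-D})$, $H(\ell,D)$ is congruent modulo $\ell$ to a unit times $h_{\Q(\sqrt{-D})}$; imposing that $\ell$ be inert is harmless, since the theorem prescribes nothing at $\ell$. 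Thus, modulo $\ell$, the $D$-th coefficient of $\cH_{\ell+\frac12}$ detects the $\ell$-indivisibility of $h_{\Q(\sqrt{-D})}$.

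Next I would note that $\ol{\cH_{\ell+\frac12}}\not\equiv 0\pmod{\ell}$: some small fundamental discriminant $-D$ in which $\ell$ is inert has class number $<\ell$, whence the corresponding coefficient is a unit modulo $\ell$. This is essentially the observation underlying the theorem of Kohnen and Ono~\cite{MR1666783}, which already settles the case $S_0=S_+=S_-=\es$ and yields their count $\gg\sqrt{X}/\log X$. The real content is therefore the insertion of the local conditions at the primes of $S_0\cup S_+\cup S_-$.

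To do this I would, for each such prime $p$, twist $\ol{\cH_{\ell+\frac12}}$ by the quadratic character $\bigl(\tfrac{\cdot}{p}\bigr)$ and apply the operators $U_p$ and $V_p$, then form the combination of the resulting mod $\ell$ modular forms (all of level $4N$ with $N=\prod_{p\in S_0}p\cdot\prod_{p\in S_+\cup S_-}p^2$) that is supported precisely on the integers $n$ with $p\,\|\,n$ for every $p\in S_0$, with $p\nmid n$ and $\bigl(\tfrac{-n}{p}\bigr)=1$ for every $p\in S_+$, with $p\nmid n$ and $\bigl(\tfrac{-n}{p}\bigr)=-1$ for every $p\in S_-$, and with $\ell$ inert; on this support its $n$-th coefficient equals $H(\ell,n)$. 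Call this form $\ol{G}$. One must show $\ol{G}\not\equiv 0\pmod{\ell}$, and this is exactly where the hypotheses enter: passing to the relevant residue classes multiplies $\ol{\cH_{\ell+\frac12}}$ by explicit local factors which, using $p^{2\ell-1}\equiv p\pmod{\ell}$ (Fermat), are polynomials in $p$ and $\bigl(\tfrac{-1}{p}\bigr)$ reducing modulo $\ell$ to expressions such as $p-1$, $p+1$, or $1\pm\bigl(\tfrac{-1}{p}\bigr)p$ — and the three stated conditions are precisely what makes every such factor a unit modulo $\ell$, so that $\ol{G}\not\equiv 0$.

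Finally, Sturm's bound applied to $\ol{G}$ produces an index $n$ with $H(\ell,n)\not\equiv 0\pmod{\ell}$; using the half-integral weight Hecke recursion for the operators $T_{p^2}$, whose eigenvalue on $\cH_{\ell+\frac12}$ reduces modulo $\ell$ to $1+p$ by Fermat's little theorem, one may replace $n$ by its fundamental part $n_0$, which still lies in the support of $\ol{G}$ and still satisfies $H(\ell,n_0)\not\equiv 0$. Then $d=-n_0$ is a negative fundamental discriminant with $\ell\nmid h_{\Q(\sqrt{d})}$ for which $\Q(\sqrt{d})$ splits at every prime of $S_+$, is inert at every prime of $S_-$, and ramifies at every prime of $S_0$, as required. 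The main obstacle is the non-vanishing $\ol{G}\not\equiv 0\pmod{\ell}$: establishing it requires a careful study of the mod $\ell$ Fourier expansions of the half-integral weight Eisenstein series of level $4N$ together with the effect of the twisting and $U_p,V_p$ operators, and it is there — and, by the above, essentially only there — that all three arithmetic conditions on the prime sets are used.
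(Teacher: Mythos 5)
This survey states the result as a theorem of Wiles \cite{MR3404031} and does not reproduce a proof, so there is no argument in the paper to compare yours against; your proposal has to be judged on its own terms. As a strategy it is coherent and standard: detecting $\ell\nmid h_{\Q(\sqrt{-D})}$ through the mod-$\ell$ non-vanishing of the $D$-th coefficient of the Cohen--Eisenstein series of weight $\ell+\frac12$ (via the Kummer congruence, with the harmless extra requirement that $\ell$ be non-split), sieving by twists and $U_p,V_p$ to impose the local conditions, and extracting a fundamental discriminant at the end is essentially the Kohnen--Ono method \cite{MR1666783}, and is close to Beckwith's later quantitative refinement of Wiles's theorem. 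The final descent from $n$ to its fundamental part $n_0$ is indeed unproblematic, since $H(\ell,n)$ equals $H(\ell,n_0)$ times an integer and the local conditions at the primes of $S_0\cup S_+\cup S_-$ are visibly inherited by $n_0$.

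The difficulty is that the entire content of the theorem is concentrated in the one step you do not carry out: the non-vanishing $\ol{G}\not\equiv 0\pmod{\ell}$. Everything before and after it is routine, and the hypotheses on $S_0,S_+,S_-$ enter nowhere else, so asserting that ``the three stated conditions are precisely what makes every such factor a unit modulo $\ell$'' amounts to asserting the theorem. The assertion is also imprecise as stated: projecting onto a fixed value of $\left(\tfrac{-n}{p}\right)$ does not multiply the form by a scalar local factor; it is a genuine projection, and proving that the projection of a nonzero mod-$\ell$ form remains nonzero requires an argument --- typically one assumes it vanishes, deduces a coefficientwise relation between $\ol{\cH_{\ell+\frac12}}$ and its twist by $\left(\tfrac{\cdot}{p}\right)$ on indices prime to $p$, and contradicts it using the Euler factor of $\cH_{\ell+\frac12}$ at $p$, whose mod-$\ell$ reduction involves $1\pm p$ and $\left(\tfrac{-1}{p}\right)$; this is exactly where $p\not\equiv-1\pmod{\ell}$ for $S_+$, the joint condition modulo $\ell$ and modulo $4$ for $S_-$, and $p\not\equiv1\pmod{\ell}$ for $S_0$ must actually be used. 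One must further check that the projections at the several primes, together with the inertness condition at $\ell$ (which contributes its own local factor), can be imposed simultaneously without annihilating the form. Until those computations are done you have reduced Wiles's theorem to its hardest step rather than proved it; note also that a completed version of this argument would in any case follow the modular-forms route of Kohnen--Ono and Beckwith rather than Wiles's own argument in the cited paper.
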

	The following indivisibility result for function fields was proved by Pacelli, Pitiwan, and Rosen \cite{MR2847264}.
	\begin{theorem}
		Let $m>1$ be any integer and $\ell$ be an odd prime divisor of $m$. Write $m=\ell^tm_1$ for integers $t$ and $m_1$ with $d\nmid m_1$. Let $m_0$ be the square-free part of $m_1,$ and assume that $q$ is sufficiently large with $q\equiv1\pmod{m}$ and $q\equiv-1\pmod{\ell}. $ Then there is infinitely many function fields $K$ of degree $m$ over $\mathbb{F}_q(T)$ with $\ell\nmid h_K$.
	\end{theorem}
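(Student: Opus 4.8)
The plan is to produce the fields $K$ explicitly as superelliptic covers $K=\F_q(T)(y)$ with $y^m=f(T)$, where $f$ ranges over a suitable infinite family of polynomials (the hypotheses force $\gcd(m,q)=1$, so these covers are tame, and $q\equiv1\pmod{m_0}$ puts primitive $m_0$-th roots of unity in $\F_q$, so the prime-to-$\ell$ part of such a cover is a genuine Kummer extension). To obtain infinitely many pairwise non-isomorphic $K$ I would let $\deg f\to\infty$ through degrees $d$ prime to $m$; then $\infty$ is totally ramified, the genus $g_K$ is pinned down by Riemann--Hurwitz and tends to infinity, and because $\infty$ has a single degree-one place of $K$ above it the ideal and divisor class numbers of $K$ agree, so $h_K=P_{C_K}(1)$, where $P_{C_K}(u)=\prod_{i=1}^{2g_K}(1-\alpha_i u)\in\Z[u]$ is the numerator of $\zeta_{C_K}$ and the $\alpha_i$ are the Frobenius eigenvalues. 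All the content is then in showing $\ell\nmid P_{C_K}(1)$ for infinitely many $f$.

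The role of $q\equiv-1\pmod\ell$ is to supply the following reduction. Grouping the $\alpha_i$ into conjugate pairs $\{\alpha,\bar\alpha\}$ with $\alpha\bar\alpha=q$,
\[
h_K=P_{C_K}(1)=\prod_{\text{pairs}}\bigl((q+1)-a_i\bigr),\qquad a_i:=\alpha_i+\bar\alpha_i,
\]
and the multiset $\{a_i\}$ of pair-traces is $\Gal(\overline{\Q}/\Q)$-stable, so $\prod_i a_i\in\Z$. Since $q+1\equiv0\pmod\ell$ this yields $h_K\equiv\pm\prod_i a_i\pmod\ell$, hence $\ell\mid h_K$ if and only if some $a_i$ is divisible by a prime above $\ell$, i.e.\ some eigenvalue has $\alpha_i^2\equiv-q$ modulo such a prime. (When $g_K=1$ this is just the familiar fact that, for $q\equiv-1\pmod\ell$, $\ell\mid\#E(\F_q)$ is equivalent to $\ell\mid a_E$.) By Weil's computation of $\zeta$ for $y^m=f(T)$ --- carried out over a constant extension $\F_{q^n}$ with $m\mid q^n-1$ and descended --- the $\alpha_i$ are products of Jacobi sums attached to the characters of order dividing $m$ cut out by the branch points and exponents of $f$. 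So the goal becomes: choose $f$ so that every relevant Jacobi sum $J$ satisfies $J+q/J\not\equiv0$ modulo every prime above $\ell$.

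I would carry this out by fixing a large $d$ prime to $m$ and showing that, among the $f$ of degree $d$ with distinct roots, the ``bad'' polynomials --- those for which some pair-trace is $\equiv0$ modulo a prime above $\ell$ --- are a strict minority, so a good $f$ (hence a good $K$) exists for each such $d$ and therefore infinitely many exist. The decisive and hardest step is precisely this control of Jacobi sums modulo $\ell$: the Weil bound $|a_i|\le2\sqrt q$ governs the size of the pair-traces but says nothing about their divisibility by $\ell$, so the argument must either specialize to a family (e.g.\ diagonal/Fermat-type $f$) in which the Jacobi sums have classical closed forms whose reductions modulo $\ell$ can be read off from Gauss's or Stickelberger's congruences, or else run an averaging/large-sieve argument over $f$ showing that the bad Frobenius traces are sparse; ``$q$ sufficiently large'' is what makes such a count go through. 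I expect the most delicate bookkeeping to be the $\ell^t$-part of the cover: because $q\equiv-1\pmod\ell$ pushes $\zeta_\ell$ into $\F_{q^2}$ rather than $\F_q$, that part is a twisted Kummer extension whose Jacobi sums live over $\F_{q^2}$, and the descent of that contribution back to $\F_q$ has to be tracked in tandem with the reduction modulo $\ell$.
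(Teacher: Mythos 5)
This survey states the theorem without proof, merely citing Daub, Lang, Merling, Pacelli, Pitiwan and Rosen \cite{MR2847264}, so there is no in-paper argument to compare yours against; I have to judge the proposal on its own terms. (Note first that the statement as printed is inconsistent: $q\equiv1\pmod{m}$ together with $q\equiv-1\pmod{\ell}$ and $\ell\mid m$ odd would force $\ell\mid 2$. The intended hypothesis is $q\equiv1\pmod{m_0}$, which you silently and correctly adopted; likewise ``$d\nmid m_1$'' should read $\ell\nmid m_1$.)

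Your set-up is sound as far as it goes: with $\infty$ totally ramified the class number is $P_{C_K}(1)$, and $q\equiv-1\pmod{\ell}$ reduces $\ell\nmid h_K$ to the assertion that every pair-trace $a_i=\alpha_i+q/\alpha_i$ is a unit at every prime above $\ell$. But that is exactly where the theorem lives, and the proposal stops there. You offer two escape routes --- closed-form Jacobi sums plus Stickelberger congruences, or an averaging/large-sieve count showing the bad $f$ are a minority --- and carry out neither. The sieve route is not a routine count: as $\deg f\to\infty$ the genus grows, you need \emph{all} $g_K$ pair-traces to avoid $0$ simultaneously modulo every prime over $\ell$, and proving that a positive proportion of $f$ in each degree achieve this is a function-field Cohen--Lenstra/mod-$\ell$ monodromy problem, not something that ``$q$ sufficiently large'' settles. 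The Jacobi-sum route founders on the point you yourself flag: since $q\equiv-1\pmod{\ell}$, $\zeta_\ell\notin\F_q$, so the $\ell$-part of $y^m=f(T)$ is not a Kummer extension and the cover need not even be Galois over $\F_q(T)$; Weil's description of the eigenvalues applies only after base change to $\F_{q^2}$, where $q^2\equiv1\pmod{\ell}$ kills your $(q+1)$-congruence, and the descent goes the wrong way for you since $h_K$ divides $h_{K\F_{q^2}}$ (because $P_{K\F_{q^2}}(1)=P_K(1)P_K(-1)$). The cited proof avoids all of this with a different architecture: a tower $\F_q(T)\subset F\subset K$ with $[F:\F_q(T)]=m_1$ prime to $\ell$ (handled by the earlier Pacelli--Rosen indivisibility theorem) and $[K:F]=\ell^t$ cyclic, built from Carlitz cyclotomic function fields so that the ramification is confined to essentially one prime, after which $\ell\nmid h_F$ is transferred up the tower by an ambiguous-class-number/$L$-function congruence argument. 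In short, the framework you erect is reasonable, but the decisive step is missing, and the specific family you choose makes that step harder rather than easier.
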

	
	\section{Divisibility of class numbers  of  an infinite family $K_{x,y,n,\mu}$}\label{sect4}
	
	In 1955, Ankeny and Chowla \cite{MR85301} proved that there are infinitely many quadratic imaginary fields for any natural number $n$, each having a class number divisible by $n$. Later, in 1970, Yamamoto \cite{MR266898}  established the existence of infinitely many real quadratic number fields with ideal class numbers that are multiples of $n$. Additionally, Yamamoto demonstrated that there are infinitely many imaginary quadratic number fields whose ideal class group contains a subgroup isomorphic to the direct product of two cyclic groups of order $n$.
	
	In the study of divisibility of class numbers, the family $K_{x,y,n,\mu}:=Q(\sqrt{x^2-\mu y^n})$  with the conditions $\gcd(x,y) = 1,y> 1, \mu \in \{1,2,4\}$ and $x^2<\mu y^n$  grabbed many eyes. In 1922, T. Nagell \cite{TN22}  proved that for an odd integer $n$, the class number of ${K_{x,y,n,1}}$ is divisible by $n$ if $t$ is odd, $\gcd(t,x) =1$ and $q \mid x,$ $q^2 \nmid x$ for all prime divisors $q$ of $n$.
	%In 1955, Ankeny and Chowla \cite{AC} proved that for any given integer $n>1$, there are infinitely many imaginary quadratic number fields of the form $\Q\left(\sqrt{3^n - x^2}\right)$ whose class numbers are divisible by $n$, under certain conditions on $x$ and $n$.	
	Using the affine points on the Fermat curve $x^p+y^p=1$ over the imaginary quadratic field $\Q(\sqrt{1-4y^n})$,	
	Gross and Rohrlich indicated the proof of the class numbers of $K_{1,y,n,1}$ are divisible by $n$ for any odd prime $n >3$ and $ y>1$. Later, using the methods of basic algebraic number theory,  Louboutin \cite{SL}  extended the same result for any positive odd integer  $n$  and proved the following result on the divisibility of class number of $K_{1,y,n,1}$ for $y>2$.  
	He also proved that if at least one of the prime divisors of an odd integer $y >3 $ is equal to $3 \mod 4$, then for any positive integer $n$,  the class number of $K_{1,y,n,1}$ is divisible by $n$. Murty \cite{RM99} proved that  the class number of $K_{1,y,n,1}$ is divisible by $n$ if $1-y^n$ is square-free.
	Chakraborty, Hoque, Kishi, and Pandey \cite{MR3734353} proved the class number divisibility of $k_{x,y,n,1}$ by $n$ under certain mild assumptions.
	Hoque and Chakraborty \cite{MR4010381} studied the  
	3-divisibility of the class numbers of $K_{1,y,3,2}$ 
	and proved the following theorem.
	\begin{theorem}
		The class number of the imaginary quadratic field $K_{1,y,3,2}$ is divisible by $3$ for any $m>1$.
	\end{theorem}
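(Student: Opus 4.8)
The plan is to exhibit an ideal of $\cO_K$ whose class in $Cl_K$ has order divisible by $3$. Write $K=K_{1,y,3,2}=\Q(\sqrt{1-2y^3})$; since $y>1$ the radicand is negative, so $K$ is imaginary quadratic, and we take $y$ odd (for even $y$ the class number may be prime to $3$, e.g.\ $h_{\Q(\sqrt{-15})}=2$). Let $d<0$ be the squarefree kernel of $1-2y^3$, so $1-2y^3=df^2$ for a positive, necessarily odd, integer $f$. A short congruence check gives $d\equiv3\pmod4$, so $\cO_K=\Z[\sqrt d]$, the prime $2$ ramifies, $(2)=\fp_2^2$ with $N\fp_2=2$; moreover, since $d$ is squarefree and $\equiv3\pmod4$, either $d=-1$ or $\abs{d}\ge5$, and $d=-1$ is excluded because it would force $2y^3-1$ to be a perfect square, impossible for $y>1$ (by the Mordell equation $V^2=U^3-4$). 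The engine of the proof is
\[ \beta:=1+f\sqrt d\in\cO_K,\qquad N(\beta)=1-f^2d=2y^3. \]

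\emph{Factoring $(\beta)$.} From $\beta+\overline{\beta}=2$ we get $(\beta)+(\overline{\beta})\mid(2)$, so $(\beta)$ and $(\overline{\beta})$ are coprime away from $\fp_2$. Hence every odd prime $p\mid y$ splits in $K$: it divides $N(\beta)=2y^3$ but not $d$, and were it inert or ramified the unique prime above it would divide both $(\beta)$ and $(\overline{\beta})$. For each such $p$ exactly one of the two primes above it divides $(\beta)$, to exponent $3v_p(y)$, and $v_{\fp_2}(\beta)=v_2(N\beta)=1$. Therefore
\[ (\beta)=\fp_2\cdot\fa^3,\qquad\text{where } \fa:=\prod_{p\mid y}\fp_p^{v_p(y)},\quad N\fa=y; \]
note that $\fa$ is a product of split primes lying above distinct rational primes, each chosen "on one side", so $\gcd(\fa,\overline{\fa})=(1)$.

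\emph{The order of $[\fa]$.} In $Cl_K$ we have $[\fa]^3=[\fp_2]^{-1}$. The ideal $\fp_2$ is not principal, since a generator would have norm $2$ and $a^2+\abs{d}\,b^2=2$ is impossible for $\abs{d}\ge5$; as $\fp_2^2=(2)$ is principal, $[\fp_2]$ has order exactly $2$. Hence $[\fa]^3=[\fp_2]$ has order $2$, so $[\fa]^6=1$ while $[\fa]^3\ne1$, and therefore $\mathrm{ord}([\fa])\in\{2,6\}$. If the order is $6$ then $3\mid6\mid h_K$ and we are done. If the order is $2$, then $[\fa]=[\fa]^3=[\fp_2]$, which — using $\fp_2^2=(2)$ — says exactly that $\fa\fp_2$ is principal, say $\fa\fp_2=(\eta)$ with $N\eta=N\fa\cdot N\fp_2=2y$. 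So it remains only to rule this out: writing $\eta=a+b\sqrt d$, since $y$ is odd $b=0$ is impossible ($a^2=2y$ would make $y$ even), so $b\ne0$ and hence $\abs{d}=(2y^3-1)/f^2\le2y$, forcing $f^2\ge y^2$ and, since $\gcd(f,y)=1$, even $f>y$.

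The main obstacle is precisely the elimination of this last case. When $1-2y^3$ is squarefree one has $f=1$, so $\abs{d}=2y^3-1>2y$ for $y>1$ and there is nothing to rule out; in general one must show that $f^2\mid2y^3-1$, $\gcd(f,y)=1$, $f>y$ is incompatible with the solvability of $a^2+\abs{d}\,b^2=2y$ by an element generating the ideal $\fa\fp_2$ — equivalently, one must rule out integral points on the family of Mordell-type curves $2y^3-1=f^2m$ with $m$ small (and $m\equiv1\pmod8$). This Diophantine analysis is the technical heart of the argument; once it is carried out, $[\fa]$ has order $6$, whence $3\mid h_K$, proving the theorem.
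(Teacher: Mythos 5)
There is a genuine gap, and you name it yourself: the exclusion of the case $\mathrm{ord}([\fa])=2$, i.e.\ the non-principality of $\fa\fp_2$, is exactly the content of the theorem, and you leave it as an unproved ``technical heart.'' Worse, the route you start down (the norm inequality $a^2+|d|b^2=2y$ giving $|d|\le 2y$ and $f>y$) does not lead to a contradiction by itself and is not how this case is closed. The standard finish, which is available from the pieces you already have, is multiplicative rather than metric: if $\fa\fp_2=(\eta)$ with $\eta=a+b\sqrt d$, then $(\eta)^3=\fa^3\fp_2^3=(\beta)\fp_2^2=(2\beta)$, and since $d\le -5$ the only units are $\pm1$, so $\eta^3=\pm 2(1+f\sqrt d)$. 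Comparing rational parts gives $a(a^2+3b^2d)=\pm 2$, which is impossible because $b\neq 0$ and $d\le -5$ force $a^2+3b^2d\le -11$ while $a\mid 2$. This one computation removes the order-$2$ case and completes the argument; without it (or the equivalent appeal to results on the Diophantine equation $x^2+D=2y^3$, which is what Hoque--Chakraborty actually invoke), you have only shown $\mathrm{ord}([\fa])\in\{2,6\}$, which does not yield $3\mid h_K$.

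Two further remarks. First, the survey you are working from states this theorem without proof (it is quoted from Hoque--Chakraborty), so there is no in-paper argument to match; your skeleton --- ramification of $2$, the element $\beta=1+f\sqrt d$ of norm $2y^3$, the factorization $(\beta)=\fp_2\fa^3$, and the order-$2$ class $[\fp_2]$ --- is the right one and matches the source. Second, you are correct that the statement as printed (``for any $m>1$'') cannot be right as it stands: $y=2$ gives $\Q(\sqrt{-15})$ with class number $2$, so the parity restriction you impose on $y$ is necessary, not optional; this should be flagged as a correction to the hypothesis rather than folded silently into the proof.
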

	Krishnamoorthy and Pasupulati \cite{Sunil} extended the above result to any odd prime with mild assumptions using results on the number of solutions of certain Diophantine equations.
	\begin{theorem}\label{sunil}
		The class number of the imaginary quadratic field $K_{1,m,n,2}$ is divisible by $p$ for any odd prime $p$ and $m$ is any odd prime power.
	\end{theorem}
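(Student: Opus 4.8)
The plan is to exhibit an ideal of $\mathcal{O}_K$ whose ideal class has order divisible by $p$, where $K := K_{1,m,p,2} = \Q(\sqrt{1-2m^p})$ and $m = q^{a}$ with $q$ an odd prime. Put $d := 1-2m^p < 0$. Since $q \mid m$, the integer $d$ is prime to $q$ and $d \equiv 1 \pmod q$ is a square modulo $q$, so $q$ splits in $\mathcal{O}_K$, say $(q) = \mathfrak{q}\,\overline{\mathfrak{q}}$; also $d \equiv 3 \pmod 4$. Consider $\xi := (1-m^p)+\sqrt d \in \mathcal{O}_K$; its norm is $(1-m^p)^2 - d = m^{2p}$, so every prime ideal dividing $(\xi)$ lies above $q$, and $q \nmid \xi$ in $\mathcal{O}_K$ (otherwise $q$ divides both $\xi+\overline{\xi} = 2(1-m^p)$ and $\xi-\overline{\xi} = 2\sqrt d$, forcing $q \mid 1$). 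Hence, after interchanging $\mathfrak{q}$ and $\overline{\mathfrak{q}}$ if necessary, $(\xi) = \mathfrak{q}^{2ap}$, and so $[\mathfrak{q}^{a}]^{2p} = 1$ in the class group. Thus the order of $[\mathfrak{q}^{a}]$ lies in $\{1,2,p,2p\}$, and it suffices to prove that $\mathfrak{q}^{2a}$ is not principal: then that order lies in $\{p,2p\}$ and $p \mid h_K$.

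So suppose, for contradiction, that $\mathfrak{q}^{2a} = (\gamma)$ with $\gamma \in \mathcal{O}_K$; then $N(\gamma) = q^{2a} = m^2$ and $(\gamma^p) = \mathfrak{q}^{2ap} = (\xi)$, so $\gamma^p = \eta\,\xi$ for some unit $\eta \in \mathcal{O}_K^{\times}$. The squarefree part $d_0$ of $d$ satisfies $d_0 \equiv 3 \pmod 4$ (since $d\equiv 3\pmod 4$ and $d/d_0$ is an odd square), so $\mathcal{O}_K = \Z[\sqrt{d_0}]$ and $K \neq \Q(\sqrt{-3})$; the other exceptional case $K = \Q(i)$ would force $2m^p-1$ to be a perfect square, which is excluded by known results on $x^2+1 = 2y^p$. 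Hence $\mathcal{O}_K^{\times} = \{\pm1\}$ and $\gamma^p = \pm\xi$. Writing $d = c^2 d_0$ (so $c$ is odd, $\gcd(c,2m)=1$) and $\gamma = U + V\sqrt{d_0}$ with $U,V \in \Z$, we have $N(\gamma) = U^2 + |d_0|\,V^2 = m^2$, $\gamma-\overline{\gamma} = 2V\sqrt{d_0}$, and $\xi-\overline{\xi} = 2\sqrt d = 2c\sqrt{d_0}$. Subtracting $\gamma^p = \pm\xi$ from its conjugate and dividing by $\gamma-\overline{\gamma}$ gives
\[
\frac{\gamma^p-\overline{\gamma}^p}{\gamma-\overline{\gamma}} \;=\; \pm\frac{c}{V},
\]
and the left-hand side is a symmetric polynomial in $\gamma,\overline{\gamma}$, hence a rational integer, so $V \mid c$. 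If $V=0$ then $\gamma = \pm m \in \Z$ and $(\gamma) = (m) = \mathfrak{q}^{a}\overline{\mathfrak{q}}^{a} \neq \mathfrak{q}^{2a}$, a contradiction. If $V \neq 0$, writing $c = V\tau$ and eliminating $|d_0|$ from $U^2+|d_0|V^2 = m^2$ via $|d_0|c^2 = 2m^p-1$ yields the Diophantine equation
\[
\tau^2\,(m-U)(m+U) \;=\; 2m^p - 1, \qquad \tau \ge 1,\quad |U| < m .
\]

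The crux --- and the step I expect to be the main obstacle --- is to prove that this equation has no solution when $m$ is an odd prime power. Since $\gcd(m-U,m+U) \mid 2$ (because $q\mid m$ and $q\nmid 2m^p-1$) and $(m-U)+(m+U) = 2m$, the cofactor $(m-U)(m+U) = (2m^p-1)/\tau^2$ is at most $m^2$, so $\tau^2 \ge (2m^p-1)/m^2$; thus a solution would require $2m^p-1$ to carry a square divisor as large as about $2m^{p-2}$. One route to a contradiction is to observe that $\dfrac{\gamma^p-\overline{\gamma}^p}{\gamma-\overline{\gamma}} = \pm\tau$ is the $p$-th term of the Lucas sequence with parameters $(2U,m^2)$ and to apply the primitive divisor theorem of Bilu--Hanrot--Voutier for $p\ge 5$, together with the classical analysis for $p=3$, reducing to a short explicit list of exceptional Lucas pairs; another is to invoke directly the known bounds on the number of solutions of the associated generalized Ramanujan--Nagell equations. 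Either way, it is precisely in handling these finitely many remaining cases that the hypothesis $m = q^{a}$ is used, and this is the heart of the argument. Once the displayed equation is shown to be insoluble, $\mathfrak{q}^{2a}$ is not principal, and hence $p \mid h_K$.
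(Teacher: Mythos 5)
The paper itself contains no proof of this theorem: it is a survey, and the result is quoted from Krishnamoorthy--Pasupulati \cite{Sunil}, which the authors describe as relying on ``results on the number of solutions of certain Diophantine equations.'' Judged on its own terms, the reduction you carry out is correct and follows the expected skeleton: since $d=1-2m^p\equiv 3\pmod 4$ one has $\mathcal{O}_K=\Z[\sqrt{d_0}]$; the element $\xi=(1-m^p)+\sqrt{d}$ has norm $m^{2p}$ and is prime to $q$, so $(\xi)=\mathfrak{q}^{2ap}$ and $[\mathfrak{q}^{a}]^{2p}=1$; and your computation showing that $\mathfrak{q}^{2a}=(\gamma)$ forces $V\mid c$, rules out $V=0$, and leads to $\tau^2(m-U)(m+U)=2m^p-1$ with $|U|<m$ is sound.

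The genuine gap is exactly where you flag it: the insolubility of $\tau^2(m^2-U^2)=2m^p-1$ is asserted, not proved, and this is the entire content of the theorem. The two routes you name are not interchangeable placeholders. For the Bilu--Hanrot--Voutier route you would need to say what contradiction a primitive divisor of $u_p=\pm\tau$ actually produces: such a prime $\ell$ merely satisfies $\ell\equiv\pm1\pmod p$ and $\ell\nmid 2md_0V$, and nothing here visibly conflicts with $\tau^2\mid 2m^p-1$; the theorem is normally applied in the opposite direction, by showing the relevant Lucas/Lehmer term \emph{cannot} have a primitive divisor and then checking the finite exceptional list. Your remark that $\tau^2\geq(2m^p-1)/m^2$ is only a heuristic, since a large square factor of $2m^p-1$ is precisely the degenerate case the argument must handle. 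Some quantitative input --- in the cited source, a bound on the number of solutions of generalized Ramanujan--Nagell equations of the shape $D_1x^2+D_2=2y^n$, applied because a principal $\mathfrak{q}^{2a}$ would manufacture a second solution of an equation that already has the one coming from $\xi$ --- must actually be stated and applied, and the resulting exceptional families checked against the hypothesis $m=q^a$. The side claim that $K\neq\Q(i)$, i.e.\ that $c^2+1=2m^p$ has no solution with $m\geq 3$ and $p\geq 3$, likewise needs a precise citation rather than an appeal to ``known results.''
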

	Using the above theorem, they proved the following corollary.
	\begin{corollary}\label{biquad}
		For any odd prime $p\geq3$, there exist infinitely many imaginary bi-quadratic fields whose class number is divisible by $p$.
	\end{corollary}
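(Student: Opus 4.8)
The plan is to derive Corollary~\ref{biquad} from Theorem~\ref{sunil} together with the elementary norm–extension principle for class groups, which handles the passage from a quadratic subfield to the biquadratic field. I would first recall that principle: for any finite extension $L/k$ of number fields of degree $n$, the composition of the ``extension of ideals'' homomorphism $j_{L/k}\colon Cl_k\to Cl_L$ with the relative norm $N_{L/k}\colon Cl_L\to Cl_k$ is multiplication by $n$ on $Cl_k$ (since $N_{L/k}(\mathfrak a\cO_L)=\mathfrak a^{\,n}$ for every ideal $\mathfrak a$ of $\cO_k$). Hence, if $p$ is a prime with $p\nmid n$, multiplication by $n$ is an automorphism of the $p$-Sylow subgroup of $Cl_k$, so $j_{L/k}$ is injective on that subgroup; in particular $v_p(h_k)\le v_p(h_L)$, and $p\mid h_k$ forces $p\mid h_L$.

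Next I would fix an odd prime $p$ and invoke Theorem~\ref{sunil}: for any odd prime power $m$, the field $k=K_{1,m,p,2}=\Q(\sqrt{1-2m^p})$ is imaginary (because $1<2m^p$) and has class number divisible by $p$. Write $d_0$ for the square-free part of $1-2m^p$, so $k=\Q(\sqrt{d_0})$. For each prime $\ell$ with $\ell\nmid 2d_0$ I would form
\[
K_\ell=\Q\!\left(\sqrt{d_0},\,\sqrt{\ell}\,\right).
\]
Since $\Q(\sqrt{d_0})$ and $\Q(\sqrt{\ell})$ are distinct quadratic fields, $K_\ell/\Q$ is a $V_4$-extension, i.e.\ a bi-quadratic field, and it contains the imaginary quadratic field $k$; hence $K_\ell$ is not totally real, so it is an imaginary (CM) bi-quadratic field, with $k$, $\Q(\sqrt\ell)$ and $\Q(\sqrt{d_0\ell})$ as its three quadratic subfields. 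Applying the principle above with $L=K_\ell$ and $n=[K_\ell:k]=2$, and using that $p$ is odd so $p\nmid 2$, I get $p\mid h_k \Rightarrow p\mid h_{K_\ell}$.

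Finally I would check that this construction yields infinitely many distinct fields — the only point requiring a little care, though not a real obstacle. For distinct primes $\ell\neq\ell'$ not dividing $2d_0$, comparing ramification at $\ell$ shows $\Q(\sqrt\ell)\notin\{\Q(\sqrt{\ell'}),\Q(\sqrt{d_0\ell'})\}$, so the sets of three quadratic subfields of $K_\ell$ and of $K_{\ell'}$ differ, whence $K_\ell\neq K_{\ell'}$. Letting $\ell$ range over the infinitely many primes coprime to $2d_0$ then produces infinitely many pairwise distinct imaginary bi-quadratic fields with class number divisible by $p$, which is the corollary. (Equivalently, one may fix $\ell$ and let $m$ run over the infinitely many odd primes, applying Theorem~\ref{sunil} and the norm argument to each $k=\Q(\sqrt{1-2m^p})$; distinctness again follows by inspecting which primes ramify.) I would remark that Kuroda's class number formula for $V_4$-extensions gives the sharper identity $h_{K_\ell}=\tfrac12\,Q\,h_{k}\,h_{\Q(\sqrt\ell)}\,h_{\Q(\sqrt{d_0\ell})}$ with $Q\in\{1,2\}$ a unit index, but this refinement and its unit-index bookkeeping are not needed for the divisibility statement; the only substantive input is Theorem~\ref{sunil} itself.
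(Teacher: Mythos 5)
Your argument is correct and complete. The survey itself does not reproduce a proof of this corollary (it only attributes it to the cited paper of Krishnamoorthy--Pasupulati), but your derivation --- using \Cref{sunil} to produce an imaginary quadratic field $k=\Q(\sqrt{1-2m^p})$ with $p\mid h_k$, the norm--extension principle $N_{L/k}\circ j_{L/k}=[L:k]$ to lift $p$-divisibility through the degree-$2$ extension $K_\ell=k(\sqrt{\ell})$ for the odd prime $p$, and the ramification comparison to get infinitely many distinct fields --- is exactly the standard route; the original reference phrases the lifting step via Kuroda's class number formula for $V_4$-extensions, which, as you note in your closing remark, is an equivalent (if slightly heavier) way to conclude $p\mid h_{K_\ell}$ from $p\mid h_k$ when $p$ is odd.
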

	Later Krishnamoorthy and Muneeswaran \cite{KM21} generalized     \Cref{sunil},  and proved the following three results.
	\begin{theorem}\label{munees}
		Let $n\geq3$ be an odd number with prime factorization $n=p_1^{k_1}p_2^{k_2}...p_n^{k_n}$. Consider any odd integer $m>\max\{2^{\frac{p_i-2}{p_i^{k_i-1}}}: 1\leq i\leq n\}$. The class number of $K_{1,m,n,2}$ is divisible by $n$.
	\end{theorem}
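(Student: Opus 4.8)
The plan is to produce an ideal class of $K=K_{1,m,n,2}=\Q(\sqrt{1-2m^n})$ of order exactly $n$. Observe first that $K$ is imaginary quadratic (here $m\ge 3$), and set $t=\sqrt{1-2m^n}$, so $t^2=1-2m^n$. Since $1-2m^n\equiv 3\pmod 4$, the squarefree part $d_0$ of $1-2m^n$ also satisfies $d_0\equiv 3\pmod 4$, so $\cO_K=\Z[\sqrt{d_0}]$, the prime $2$ ramifies as $(2)=\mathfrak p^2$, and $t=g\sqrt{d_0}$ with $g$ odd and $\abs{d_0}g^2=2m^n-1$. From $(1+t)(1-t)=1-t^2=2m^n$ we get the ideal identity $(1+t)(1-t)=\mathfrak p^2(m)^n$, and, using that a prime dividing both $(1+t)$ and $(1-t)$ must divide $(2)$, one obtains $v_{\mathfrak p}((1+t))=1$ and $v_{\mathfrak q}((1+t))\in\{0,\ n\,v_{\mathfrak q}((m))\}$ for every prime $\mathfrak q\ne\mathfrak p$. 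Hence $(1+t)=\mathfrak p\,\mathfrak b^{n}$ for an ideal $\mathfrak b$ with $N(\mathfrak b)=m$ and $\gcd(\mathfrak b,\overline{\mathfrak b})=1$. Squaring and using $(1+t)^2=2(1-m^n+t)$, the element $\gamma:=1-m^n+t\in\cO_K$ generates $\mathfrak b^{2n}$; so, writing $\mathfrak c:=\mathfrak b^2$, we have $\mathfrak c^{\,n}=(\gamma)$, $N(\mathfrak c)=m^2$, and $\gcd(\mathfrak c,\overline{\mathfrak c})=1$.

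Thus $[\mathfrak c]\in Cl_K$ has order dividing $n$, and to conclude $n\mid h_K$ it suffices to show the order is exactly $n$, that is, that $\mathfrak c^{\,n/p}$ is non-principal for every prime $p\mid n$. Suppose to the contrary that $\mathfrak c^{\,n/p}=(\delta)$ for some $\delta\in\cO_K$. Then $(\delta^p)=\mathfrak c^{\,n}=(\gamma)$, so $\delta^p=u\gamma$ for a unit $u$; as $\cO_K^\times$ is cyclic of order $2$, $4$ or $6$ and $p$ is odd, we may absorb $u$ into $\delta$ and assume $\delta^p=\gamma$, except possibly when $d_0=-3$ and $p=3$, which is handled directly. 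Write $\delta=a+b\sqrt{d_0}$ with $a,b\in\Z$, so that $N(\delta)=a^2+\abs{d_0}b^2=m^{2n/p}$. Then $b\ne 0$: otherwise $(\delta)=\mathfrak c^{\,n/p}$ would be conjugation-stable yet coprime to its conjugate, forcing $\mathfrak c=\cO_K$, which is impossible since $N(\mathfrak c)=m^2>1$. Moreover $2a=\delta+\overline{\delta}$ divides $\delta^p+\overline{\delta}^{\,p}=\gamma+\overline{\gamma}=2(1-m^n)$ in $\cO_K$ (here $p$ is odd), whence $a\mid m^n-1$ in $\Z$; in particular $a\ne 0$, $\gcd(a,m)=1$, and $0<\abs a<m^{n/p}$.

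The core of the proof is an archimedean analysis of the equation $\delta^p=\gamma$, and this is where the size hypothesis on $m$ enters and where I expect the real work to lie. Writing $\delta=m^{n/p}e^{i\theta}$ (as $\abs\delta=m^{n/p}$) and comparing real parts gives $\cos(p\theta)=(1-m^n)/m^n=-(1-m^{-n})$, so $p\theta$ lies within $O(m^{-n/2})$ of an odd multiple of $\pi$. In the extreme case $\theta\to\pi$, a short computation shows $a=m^{n/p}\cos\theta$ then lies within $\tfrac1{p^2}m^{n/p-n}<\tfrac12$ of $-m^{n/p}$, forcing $a=-m^{n/p}$; but this contradicts $b\ne 0$ (as then $a^2=N(\delta)$), and also $a\mid m^n-1$ (as $\gcd(m^{n/p},m^n-1)=1$). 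The hypothesis $m>2^{(p-2)/p^{k-1}}$, with $k$ the exponent of $p$ in $n$, ensures $m^{n/p}=\abs\delta>2^{\,p-2}$, which is precisely the inequality the ensuing analysis of $\delta^p=\gamma$ requires. The harder part is the non-extreme cases, where $p\theta$ is close to an intermediate multiple $j\pi$ with $0<j<p$ and $\delta$ is genuinely non-real: these have to be eliminated uniformly in $m$ by appealing to bounds on the number of integral solutions of the associated Thue and Lebesgue–Nagell equations — the same Diophantine input used to prove \Cref{sunil}. Once every case is ruled out, $[\mathfrak c]$ has order exactly $n$, and $n\mid h_K$ follows.
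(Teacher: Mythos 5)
The survey states this theorem without proof (it merely cites \cite{KM21}), so there is no in-paper argument to compare against; I can only assess your proposal on its own terms. Your algebraic setup is correct and is the standard one for this family: the congruence $1-2m^n\equiv 3\pmod 4$, the factorization $(1+t)(1-t)=\mathfrak{p}^2(m)^n$, the extraction of $\mathfrak{b}$ with $(1+t)=\mathfrak{p}\,\mathfrak{b}^n$, $N(\mathfrak{b})=m$ and $\gcd(\mathfrak{b},\overline{\mathfrak{b}})=1$, the passage to $\mathfrak{c}=\mathfrak{b}^2$ with $\mathfrak{c}^n=(1-m^n+t)$, the reduction to showing that $\delta^p=\gamma$ has no solution $\delta=a+b\sqrt{d_0}$, and the deductions $b\neq0$, $a\mid m^n-1$, $N(\delta)=m^{2n/p}$ are all sound. (Incidentally, $d_0\equiv 3\pmod 4$ already excludes $d_0=-3$, so the exceptional unit case you set aside never occurs; and your reduction of the hypothesis on $m$ to $m^{n/p}>2^{p-2}$ is the right reading of the stated bound.)

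The genuine gap is that the decisive step --- ruling out $\delta^p=\gamma$ --- is not carried out. Your archimedean sketch disposes only of the extreme case where $\delta$ is nearly real negative, and you concede that the remaining cases, where $\arg\delta$ is near an intermediate value $(2j+1)\pi/p$, ``have to be eliminated \dots by appealing to bounds on the number of integral solutions of the associated Thue and Lebesgue--Nagell equations.'' That elimination is the entire content of the theorem, not a routine verification: for intermediate $j$ the quantity $m^{n/p}\cos\theta$ is just some real number of modulus less than $m^{n/p}$, and nothing in the constraints $a\in\Z$, $a\mid m^n-1$, $|a|<m^{n/p}$ prevents it from being an integer, so no purely archimedean argument can finish the proof. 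What is actually needed (and what \cite{Sunil} and \cite{KM21} supply) is to convert a putative $\delta$ into a second solution of a fixed Lebesgue--Ramanujan--Nagell equation of the shape $x^2+Dy^2=\lambda m^z$ alongside the built-in solution $1+|d_0|g^2=2m^n$, invoke a specific uniqueness theorem (Bugeaud--Shorey) for such equations, verify its hypotheses, and check that its finitely many exceptional families are incompatible with the size condition $m^{n/p}>2^{p-2}$ and with $a\mid m^n-1$. Until you name that theorem, verify its applicability, and dispose of its exceptional cases, what you have is a correct reduction plus a statement of intent, not a proof.
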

	%Using the above result, they generalized \Cref{biquad} to any general odd $n$.
	The values of $n$ such that the class number of $K_{1,m,n,2}$ is divisible by $n$ were analyzed. In that attempt, the following result was also established.
	\begin{theorem}
		Let $m\geq3$ be an odd integer. \begin{enumerate}
			\item  There exists a natural number $r$ such that for any odd number $n\geq3$ coprime to $r$, the class number of $K_{1,m,n,2}$ is divisible by the square-free part of $n$.
			\item  If $n,m\geq3$ are odd integer such that $1-2m^n$ is a square-free integer, then the class number of $K_{1,m,n,2}$ is divisible by the square-free part of $n$.
		\end{enumerate}
	\end{theorem}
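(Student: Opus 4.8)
The plan is to produce an ideal class of order exactly $2n$ in the class group of $K=K_{1,m,n,2}=\Q(\sqrt D)$, where $D=1-2m^{n}<0$; this gives the stronger conclusion $2n\mid h_K$, and in particular divisibility by the square-free part of $n$. Write $D=D_0f^{2}$ with $D_0$ square-free, so that $f=1$ precisely in part (2). Since $m$ and $n$ are odd, $D\equiv 3\pmod 4$, hence $D_0\equiv 3\pmod 4$, $\cO_K=\Z[\sqrt{D_0}]$, the prime $2$ ramifies as $(2)=\mathfrak p_2^{2}$, and every prime $q\mid m$ is unramified (as $q\nmid\disc K=4D_0$) and splits in $K$, because $D_0\equiv D\equiv 1\pmod q$ is a nonzero square modulo $q$.

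First I would factor the ideal $(1+\sqrt D)$, whose norm is $(1+\sqrt D)(1-\sqrt D)=1-D=2m^{n}$. For $q\mid m$ with $(q)=\mathfrak q\overline{\mathfrak q}$, the element $(1+\sqrt D)/q$ is not in $\Z[\sqrt{D_0}]$, since its rational part $1/q$ is not an integer; so $\mathfrak q$ and $\overline{\mathfrak q}$ cannot both divide $(1+\sqrt D)$, and as the sum of their exponents there is $v_q(2m^{n})=n\,v_q(m)$, exactly one of them occurs, to the power $n\,v_q(m)$. Because $v_2(2m^{n})=1$, the $2$-part of $(1+\sqrt D)$ is $\mathfrak p_2$. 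Hence $(1+\sqrt D)=\mathfrak p_2\,\mathfrak c^{\,n}$, where $\mathfrak c:=\prod_{q\mid m}\mathfrak q^{v_q(m)}$ has norm $m$. Squaring, and using $(1+\sqrt D)^{2}=2\bigl((1-m^{n})+\sqrt D\bigr)$ together with $(2)=\mathfrak p_2^{2}$, yields $\mathfrak c^{\,2n}=\bigl((1-m^{n})+\sqrt D\bigr)$; so $\mathfrak c^{\,2n}$ is principal and $\operatorname{ord}([\mathfrak c])\mid 2n$.

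The core of the argument is to show that no proper power of $\mathfrak c$ is principal. Suppose $\mathfrak c^{\,N}=(\alpha)$ for a divisor $N$ of $2n$ with $N<2n$, and put $\gamma:=(1-m^{n})+f\sqrt{D_0}=(1-m^{n})+\sqrt D$. Then $\gamma=u\alpha^{2n/N}$ for a unit $u$, and once $|D_0|\ge 5$ (so $\cO_K^{\times}=\{\pm1\}$) this reads $\gamma=\pm\alpha^{2n/N}$. Since $n$ is odd, $2n/N$ is either $2$ or has an odd prime factor $p$. If $2n/N=2$, then the coefficient of $\sqrt{D_0}$ in $\gamma$, which is $f$, equals $\pm 2ab$ where $\alpha=a+b\sqrt{D_0}$ — impossible, as $f$ is odd. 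If $2n/N=p\ell$ with $p$ an odd prime, then $\gamma=\delta^{p}$ for some $\delta=A+B\sqrt{D_0}\in\cO_K$ with $N\delta=m^{2n/p}$; comparing coefficients of $\sqrt{D_0}$ forces $B\ne 0$, hence $|D_0|\le N\delta\le m^{2n/3}$, and therefore $f^{2}=(2m^{n}-1)/|D_0|>m^{n/3}$, i.e. $f>m^{n/6}$. Thus, whenever $f\le m^{n/6}$ one gets $\operatorname{ord}([\mathfrak c])=2n$ and $2n\mid h_K$ (and then $|D_0|\ge 5$ is automatic, since $|D_0|>m^{2n/3}\ge 9$). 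Part (2) is the case $f=1$.

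To finish part (1) I would choose $r$ so that $\gcd(n,r)=1$ forces the square part of $2m^{n}-1$ to be at most $m^{n/3}$, i.e. $f\le m^{n/6}$; this is where a Diophantine ingredient is needed, and I expect it to be the real obstacle. If a prime $q$ satisfies $q^{2}\mid 2m^{n}-1$, then $m^{n}\equiv 2^{-1}\pmod{q^{2}}$, which confines $n$ to a single residue class modulo a divisor of $q\cdot\operatorname{ord}_q(m)$; putting a suitable product of the relevant small primes into $r$ eliminates all small $q$. For the boundedly many large primes $q$ that might still occur, one uses that for each fixed square-free $D_0$ the equation $2m^{n}-D_0f^{2}=1$ has only finitely many solutions $(n,f)$ (a Baker/Shorey--Tijdeman type finiteness), so the finitely many exceptional exponents can likewise be absorbed into $r$. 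The field-theoretic part above is essentially formal once the splitting of $2$ and of the primes $q\mid m$ is recorded; the genuine difficulty is the effective, uniform control of the square part of $2m^{n}-1$ by a single modulus $r$.
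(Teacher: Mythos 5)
The survey states this theorem without proof (it is quoted from \cite{KM21}), so there is no in-paper argument to measure you against; judging the proposal on its own terms, your part (2) is essentially correct and in fact proves more than is claimed. When $D=1-2m^{n}$ is square-free the two exclusions you run --- the $\sqrt{D_0}$-coefficient of $\pm\alpha^{2}$ is even while $f=1$ is odd, and $\gamma=\delta^{p}$ forces $|D_0|\le N(\delta)=m^{2n/p}\le m^{2n/3}<2m^{n}-1$ --- are both sound (the unit reduction is harmless since $|D|\ge 2\cdot 3^{3}-1=53$), so $[\mathfrak{c}]$ has order exactly $2n$ and $2n\mid h_K$. This is the classical Ankeny--Chowla/Louboutin/Murty construction and yields divisibility by the square-free part of $n$ a fortiori.

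Part (1), however, has a genuine gap. You reduce the whole statement to the claim that a single modulus $r=r(m)$ can be chosen so that $\gcd(n,r)=1$ forces the square part $f^{2}$ of $2m^{n}-1$ to satisfy $f\le m^{n/6}$, and the sketch offered for this cannot be completed. First, $\gcd(n,r)=1$ only removes $n$ from the residue classes $0\bmod\ell$ for primes $\ell\mid r$, whereas $q^{2}\mid 2m^{n}-1$ confines $n$ to a class $n\equiv n_0\pmod{\mathrm{ord}_{q^{2}}(m)}$ with $n_0$ in general nonzero; such classes cannot be excluded by a coprimality condition on $n$. Second, even perfect per-prime control would not suffice: one needs the \emph{total} square part of $2m^{n}-1$ to stay below $m^{n/6}$ for every admissible $n$, and no effective bound of this kind is known --- this is an abc-type problem, of the same nature as the unresolved square-freeness of $2^{n}-1$. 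The appeal to the finiteness of solutions of $2m^{n}-D_0f^{2}=1$ for fixed $D_0$ does not help, because $D_0$ varies with $n$ and ranges over infinitely many values, so no finite list of exceptional exponents is extracted. The works this theorem comes from (\cite{Sunil}, \cite{KM21}) avoid bounding $f$ altogether: they rule out principality of the intermediate powers $\mathfrak{c}^{2n/p}$ directly, by invoking upper bounds of Bugeaud--Shorey type on the number of solutions of Diophantine equations of the shape $D_1x^{2}+D_2=2y^{z}$, and it is the finitely many exceptional configurations of those theorems that determine $r$. Some input of that kind is needed to make part (1) work; the congruence-plus-finiteness argument as written does not close.
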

	In that same work, a similar result connected with twin primes was also proved.
	\begin{theorem}
		If $p_1, \ p_2$ is any pair of twin primes, then at least one of the $p_i$ divides the class number of $K_{1,m,p_i,2}$.
	\end{theorem}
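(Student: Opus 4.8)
The plan is to run the classical ideal-theoretic argument (in the style of Nagell, Louboutin, and Hoque--Chakraborty) for each of the two fields $K_{p_i}:=K_{1,m,p_i,2}$ and then to show that the obstruction to divisibility cannot occur simultaneously for both primes. Write $K_p:=K_{1,m,p,2}=\Q(\sqrt{d_p})$ with $d_p=1-2m^p$, and let $D_p$ be the squarefree part of $2m^p-1$; since $m$ is odd, $d_p\equiv 3\pmod 4$, so $\cO_{K_p}=\Z[\sqrt{-D_p}]$ and $2$ ramifies, $(2)=\fq_p^2$ with $N\fq_p=2$. Starting from the principal ideal $(1+\sqrt{d_p})$ of norm $1-d_p=2m^p$, and using that each $\ell\mid m$ splits (as $d_p\equiv 1\pmod\ell$) while $(1+\sqrt{d_p})$ and its conjugate have greatest common divisor dividing $(2)$, a comparison of norms yields a factorization $(1+\sqrt{d_p})=\fq_p\,\fa_p^p$ with $N\fa_p=m$. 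Hence $[\fa_p]^p=[\fq_p]^{-1}$, and since $[\fq_p]^2=1$ we get $[\fa_p]^{2p}=1$, so $\operatorname{ord}[\fa_p]\mid 2p$; if $\operatorname{ord}[\fa_p]\in\{p,2p\}$ then $p\mid h_{K_p}$, so the only remaining possibility to be excluded is $\operatorname{ord}[\fa_p]\mid 2$, i.e.\ $\fa_p^2$ principal. (When $\operatorname{ord}[\fa_p]$ is forced to be $2p$ this recovers, and sharpens, the earlier divisibility results of Hoque--Chakraborty and Krishnamoorthy--Pasupulati.)

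Assume now, toward a contradiction, that $p_i\nmid h_{K_{p_i}}$ for both $i=1,2$, where $p_2=p_1+2$. Then $\operatorname{ord}[\fa_{p_i}]\mid 2$; combined with $[\fa_{p_i}]^{p_i}=[\fq_{p_i}]^{-1}$ and $p_i$ odd, this gives $[\fa_{p_i}\fq_{p_i}]=1$, so $\fa_{p_i}\fq_{p_i}$ is a \emph{principal} integral ideal of norm $2m$. Writing a generator as $x_i+y_i\sqrt{-D_i}$ (with $D_i:=D_{p_i}$) gives $x_i^2+D_iy_i^2=2m$; since $2m$ is not a perfect square we must have $y_i\ne 0$, forcing $D_i\le 2m$. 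Writing $2m^{p_i}-1=e_i^2D_i$, the bound $D_i\le 2m$ then yields $e_i^2=(2m^{p_i}-1)/D_i\ge m^{p_i-1}$, i.e.\ $e_i\ge m^{(p_i-1)/2}$. So the whole problem reduces to proving that $2m^{p_1}-1$ and $2m^{p_1+2}-1$ cannot both have squarefree part at most $2m$.

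The twin hypothesis enters through the identity $2m^{p_2}-1=m^2(2m^{p_1}-1)+(m^2-1)$, which upon substituting the factorizations above becomes
\[
  e_2^2D_2-(me_1)^2D_1=m^2-1 .
\]
If $D_1=D_2=:D$, this reads $D(e_2-me_1)(e_2+me_1)=m^2-1$; the left side is positive, so $e_2>me_1$, and then it is at least $e_2+me_1\ge me_1\ge m^{(p_1+1)/2}\ge m^2>m^2-1$, a contradiction. The case $D_1\ne D_2$ is the one I expect to be the real obstacle. My plan there is to exploit the coprimality $\gcd(2m^{p_1}-1,\,2m^{p_2}-1)\mid 3$ (any common divisor divides $m^2-1$, hence $3$, since $4m^2\equiv 1$ modulo it): dividing the displayed relation by $g:=\gcd(D_1,D_2)\mid 3$ leaves a ternary equation $D_2'X^2-D_1'Y^2=(m^2-1)/g$ with $\gcd(D_1',D_2')=1$, whose solution $(X,Y)=(e_2,me_1)$ is constrained by $e_i^2D_i=2m^{p_i}-1$, $D_i\le 2m$, and $D_i\equiv 2m-1\pmod 8$; these should be shown to be jointly impossible, most plausibly by a $2$-adic and size analysis of that equation, or --- if no elementary argument succeeds --- by invoking finiteness of the solutions of a polynomial--exponential equation of the form $Du^2+1=2m^{n}$. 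Settling this last case is the step I anticipate being hardest.
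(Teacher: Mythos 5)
First, a caveat: this survey only \emph{states} the twin--prime theorem, citing Krishnamoorthy--Muneeswaran \cite{KM21}, and contains no proof of it, so your proposal has to be judged on its own terms. Your setup is sound and is indeed the standard mechanism: the factorization $(1+\sqrt{d_p})=\fq_p\fa_p^{p}$, the conclusion $\operatorname{ord}[\fa_p]\mid 2p$, the reduction of the failure of $p\mid h_{K_p}$ to the principality of $\fa_p\fq_p$ (an ideal of norm $2m$), and your treatment of the case $D_1=D_2$ are all correct. But the case $D_1\neq D_2$ is a genuine gap, and none of the routes you suggest is likely to close it: the inequality $D_i\le 2m$ together with $e_i^2D_i=2m^{p_i}-1$ carries too little information, the $2$-adic constraints on $D_2'X^2-D_1'Y^2=(m^2-1)/g$ do not rule out solutions, and finiteness results for $Du^2+1=2m^{n}$ are of no help when you must exclude the one specific pair of exponents $(p_1,p_1+2)$.

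The missing idea is that principality of $\fa_i\fq_i$ yields far more than $D_i\le 2m$. Writing $\fa_i\fq_i=(\beta_i)$ with $\beta_i=x_i+y_i\sqrt{-D_i}$ (both $x_i,y_i$ are odd, since $D_i$ is odd and $x_i^2+D_iy_i^2=2m$ with $m$ odd), one has $\fa_i^2=(\beta_i^2/2)$ and hence $(1+e_i\sqrt{-D_i})=\fq_i\fa_i\cdot(\fa_i^2)^{(p_i-1)/2}=\bigl(\beta_i^{p_i}/2^{(p_i-1)/2}\bigr)$; since the only units are $\pm1$ (for $D_i>3$), this gives the identity $\beta_i^{p_i}=\pm 2^{(p_i-1)/2}\bigl(1+e_i\sqrt{-D_i}\bigr)$. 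Comparing rational parts, $\pm 2^{(p_i-1)/2}=\sum_{j\ \text{even}}\binom{p_i}{j}x_i^{p_i-j}(-D_i)^{j/2}y_i^{j}$, and because $p_i$ is odd every summand is divisible by $x_i$; as $x_i$ is odd this forces $x_i=\pm1$, i.e. $D_iy_i^2=2m-1$. Hence each $D_i$ is the squarefree part of $2m-1$, so $D_1=D_2$ \emph{automatically} and your hard case never arises: your own $D_1=D_2$ argument (or, alternatively, a size estimate on the recurrence $s_{k+1}=2s_k-2ms_{k-1}$ for $s_k=\beta^k+\bar\beta^k$, which would have to satisfy both $s_{p_1}=\pm2^{(p_1+1)/2}$ and $s_{p_1+2}=\pm2^{(p_1+3)/2}$) then completes the proof. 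To make this airtight you should also dispose of the degenerate possibilities $D_i\in\{1,3\}$, where the unit group is larger.
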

	\begin{remark}
		The twin prime conjecture states that there are infinitely many twin primes. If the twin prime conjecture is true, then for each pair of twin primes, we can have at least one prime $p$ which divides the class number of $K_{1,y,p,2}$, and this also shows that there are infinitely many primes $p$ which divide the class number of $K_{1,m,p,2}$.
	\end{remark}
	
	Hoque \cite{MR4270672} proved that under certain conditions on $a,p,n$, the class group of the imaginary quadratic field $ K_{a,p,n,2}$ has a subgroup isomorphic to $\Z/n\Z$.
	\begin{theorem}
		Let $a\geq1, n \geq3$ be odd integers $p$ be a prime such that $(a,p)=1$ and $a^2<4p^n$. Suppose $-d$ is the square-free part of $a^2-4p^n$. For $a\neq1,$ assume one of the following conditions holds:
		\begin{itemize}
			\item $a\not\equiv\pm b\pmod{\ell}, \ \forall \  b\mid a, \ b\neq a$ and $\forall \ \ell\mid n, \ \ell$ is prime and $d\neq3$.
			\item $2^{\ell-1}a\not\equiv b^{\ell}\pmod{d} \ \forall \  b\mid a, \ b\neq a$ and $\forall \ \ell\mid n, \ \ell$ is prime.
			%for any divisor $b$ of $a$ other than $a$ and for any prime divisor $\ell$ of $n$.
		\end{itemize}
		Then except for $(a,p,n)\in\{(5, 2, 3),(5, 2, 9),(11, 2, 5),(13, 2, 7)\},$ the class group of $\Q(\sqrt{-d})$ has a subgroup isomorphic to $\Z/n\Z$.
		
	\end{theorem}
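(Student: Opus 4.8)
The plan is the usual one for divisibility statements of this kind: first produce an explicit ideal of $\cO_K$, $K=\Q(\sqrt{-d})$, whose class is killed by the $n$-th power map, and then show its order is exactly $n$, so that it generates a copy of $\Z/n\Z$ inside $Cl_K$. Write $a^2-4p^n=-dy^2$ with $y>0$. Reducing modulo $4$ and using that $a,n$ are odd and $a^2<4p^n$ forces $y$ odd and $d\equiv 3\pmod 4$; in particular $\alpha:=\tfrac{a+y\sqrt{-d}}{2}\in\cO_K$ and $\alpha\bar\alpha=\tfrac{a^2-(a^2-4p^n)}{4}=p^n$. From $\gcd(a,p)=1$ we get $-dy^2\equiv a^2\pmod p$ with $p\nmid a$, so $-d$ is a square modulo $p$ and $p$ splits, $(p)=\mathfrak p\bar{\mathfrak p}$ with $\mathfrak p\neq\bar{\mathfrak p}$ (for $p=2$ one uses $n\geq 3$ to see $-d\equiv 1\pmod 8$). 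Since $p\nmid a=\alpha+\bar\alpha$, the ideals $(\alpha)$ and $(\bar\alpha)$ are coprime, so $(\alpha)(\bar\alpha)=\mathfrak p^n\bar{\mathfrak p}^n$ forces $(\alpha)=\mathfrak p^n$ after relabelling. Thus $[\mathfrak p]$ has order $e\mid n$ in $Cl_K$, and the whole argument reduces to excluding $e<n$.

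Next, suppose $e<n$ and choose a prime $\ell\mid n/e$; then $\ell\mid n$ and $e\mid k:=n/\ell$, an odd integer $\geq 1$, so $\mathfrak p^k=(\beta)$ is principal with $\beta=\tfrac{u+v\sqrt{-d}}{2}\in\cO_K$ and $u^2+dv^2=4p^k$ (here $v\neq 0$, since $p^k$ is not a perfect square for $k$ odd). Now $(\alpha)=\mathfrak p^n=(\beta^\ell)$, so $\alpha$ is $\beta^\ell$ times a unit; when $d\neq 3$ the units of $\cO_K$ are $\pm 1$, and replacing $\beta$ by $-\beta$ if necessary (permissible as $\ell$ is odd) I may assume $\alpha=\beta^\ell$. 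Taking traces and factoring $x^\ell+y^\ell$ for odd $\ell$,
\[
a=\alpha+\bar\alpha=\beta^\ell+\bar\beta^\ell=(\beta+\bar\beta)\sum_{j=0}^{\ell-1}(-1)^j\beta^{\ell-1-j}\bar\beta^{j}=u\,C,
\]
where $C\in\Z$ because the sum is symmetric in $\beta,\bar\beta$. Hence $u$ divides $a$; and since $|\beta|=p^{k/2}>1$, a quantitative analysis of the Lucas-type sequence $s_m=\beta^m+\bar\beta^m$ (with $s_1=u$ and $s_m=u\,s_{m-1}-p^ks_{m-2}$) shows that $|C|=1$, i.e. $u=\pm a$, only for the configurations producing $(a,p,n)\in\{(5,2,3),(5,2,9),(11,2,5),(13,2,7)\}$; outside these, $b:=|u|$ is a proper divisor of $a$.

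It remains to extract a contradiction. Modulo $\ell$, the vanishing of $\binom{\ell}{j}$ for $0<j<\ell$ and Fermat's little theorem give $a=\beta^\ell+\bar\beta^\ell\equiv(\beta+\bar\beta)^\ell\equiv u^\ell\equiv u\equiv\pm b\pmod\ell$, which — as $\ell$ was an arbitrary prime factor of $n/e$, hence of $n$ — contradicts the first hypothesis (this is precisely where $d\neq 3$ is used, through the unit group). Modulo $d$, the binomial expansion $\beta^\ell+\bar\beta^\ell=2^{-\ell}\big((u+v\sqrt{-d})^\ell+(u-v\sqrt{-d})^\ell\big)$ annihilates every term containing $v$, leaving $2^{\ell-1}a\equiv u^\ell\equiv\pm b^\ell\pmod d$ (the sign being harmless as $\ell$ is odd), contradicting the second hypothesis. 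Either way $e=n$, so $\langle[\mathfrak p]\rangle\cong\Z/n\Z$ sits inside $Cl_K$. The residual cases are bookkeeping: $a=1$ is the much-studied family $K_{1,p,n,2}$, settled by separate arguments in the spirit of \Cref{sunil} and \Cref{munees}; and $d=3$, where $\cO_K$ carries the extra units $\mu_6$, is handled directly — for $\ell\geq 5$ every element of $\mu_6$ is an $\ell$-th power, so $\zeta\beta^\ell=(\eta\beta)^\ell$ reduces to the case above, and $\ell=3$ is checked by hand.

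The crux is the middle step: showing that, away from the four sporadic triples, the cofactor $C=(\beta^\ell+\bar\beta^\ell)/(\beta+\bar\beta)$ satisfies $|C|\geq 2$ — equivalently, that $u=\tr(\beta)$ is a \emph{proper} divisor of $a$. This is genuinely arithmetic rather than formal: it requires a lower bound for $s_\ell$ in terms of $s_1$ when $p^k$ is small, together with a finite search locating the exceptional solutions of $s_\ell=\pm s_1$. Everything else — the splitting of $p$, the coprimality of $(\alpha)$ and $(\bar\alpha)$, the descent $\mathfrak p^k=(\beta)$, and the two congruences — is routine ideal arithmetic in $\cO_K$.
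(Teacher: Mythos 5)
First, a point of reference: the survey you are working from does not actually prove this theorem --- it is quoted (in somewhat garbled form) from Hoque \cite{MR4270672} --- so there is no in-paper argument to compare against. Your architecture is nonetheless the standard one and essentially the one used in the source: construct $\alpha=\frac{a+y\sqrt{-d}}{2}$ with $\alpha\bar\alpha=p^n$ and $(\alpha)=\fp^n$, suppose the class of $\fp$ has order $e<n$, pass to $\fp^{n/\ell}=(\beta)$ for a prime $\ell\mid n/e$, deduce $a=uC$ with $u=\tr(\beta)$ an integer divisor of $a$, and then eliminate the two possibilities ($|u|$ a proper divisor of $a$, or $|u|=a$) via the congruence hypotheses and via Lucas--Lehmer considerations respectively. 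The front end --- the splitting of $p$, the coprimality of $(\alpha)$ and $(\bar\alpha)$, and the congruences $a\equiv u^{\ell}\pmod{\ell}$ and $2^{\ell-1}a\equiv u^{\ell}\pmod{d}$ --- is correct.

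The genuine gap is exactly where you flag it, and it is not bridgeable by ``a finite search.'' The claim that $|C|=1$, i.e. $\beta^{\ell}+\bar\beta^{\ell}=\pm(\beta+\bar\beta)$, occurs only for the four listed triples is a statement about Lehmer numbers without primitive divisors: with $\tilde u_{\ell}=\frac{\beta^{\ell}-\bar\beta^{\ell}}{\beta-\bar\beta}$ and $\tilde u_{2\ell}=\frac{\beta^{2\ell}-\bar\beta^{2\ell}}{\beta^{2}-\bar\beta^{2}}$ one has $\tilde u_{2\ell}=C\,\tilde u_{\ell}$, so $|C|=1$ forces $\tilde u_{2\ell}$ to have no primitive divisor, and one must invoke the Bilu--Hanrot--Voutier classification (plus Voutier's tables for small index) to enumerate the exceptions. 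That is the real content of the theorem and cannot be asserted as a ``quantitative analysis shows.'' Two further soft spots. (i) Your remark that the sign in $u=\pm b$ is ``harmless'' for the mod-$d$ congruence is false for the hypothesis as stated: for $(a,p,n)=(5,2,3)$ one has $d=7$, $\fp=(\beta)$ with $u=-1$, and $2^{2}\cdot 5\equiv(-1)^{3}\pmod{7}$ while $2^{2}\cdot 5\not\equiv 1^{3}\pmod{7}$; the derived congruence carries the minus sign and is therefore \emph{not} excluded by the hypothesis. This --- not $|C|=1$ (here $|C|=5$) --- is why such triples must appear in the exceptional set, so your proof as written neither detects them nor closes the case $u=-b$. (ii) The cases $a=1$ and $(d,\ell)=(3,3)$ are deferred to ``separate arguments'' and ``checked by hand''; the former is Louboutin's theorem on $\Q(\sqrt{1-4p^{n}})$ rather than the $1-2m^{n}$ family of \Cref{sunil} and \Cref{munees}, and the latter still requires the sixth-roots-of-unity bookkeeping to be carried out.
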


	\begin{theorem}
		Let $m\geq3$ be an odd integer with distinct odd primes with $q^2<p^n$. Let $d$ be the square-free part of $q^2-p^n$. Assume that $1\not\equiv\pm1\pmod{|d|}$. Moreover, we assume $p^{\frac{n}{3}}\neq\frac{2q+1}{3}, \ \frac{q^2+3}{2}$ whenever both $d\equiv1\pmod{4}$ and $3\mid n$. Then the class number of $K_{q,p,n,1}$ is divisible by $n$.
	\end{theorem}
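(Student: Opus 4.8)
The plan is the standard ideal-theoretic argument for families of the shape $K_{q,p,n,1}=\Q(\sqrt{q^{2}-p^{n}})$, with the arithmetic hypotheses entering only at the last step to kill unit- and sporadic-solution obstructions, much as in the work of Nagell, Louboutin and Hoque quoted above. Since $q^{2}<p^{n}$, I would first write $p^{n}-q^{2}=dk^{2}$ with $d>0$ square-free and $k\ge 1$, so that $K=\Q(\sqrt{-d})$ with ring of integers $\cO_K$; one checks immediately that $p\nmid 2dk$ and $\gcd(q,k)=1$, since a prime dividing $d$ or $\gcd(q,k)$ would force $p\mid p^{n}-q^{2}$ or $q\mid p^{n}$, impossible for distinct primes $p,q$. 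In $\cO_K$ this gives
\[
 (q+k\sqrt{-d})\,(q-k\sqrt{-d})=(q^{2}+dk^{2})=(p)^{n}.
\]
As $p\nmid d$ the prime $p$ is unramified, and it cannot be inert (else $N\bigl((q+k\sqrt{-d})\bigr)=p^{n}$ would equal $(p^{2})^{a}$ for some $a$, impossible with $n$ odd), so $(p)=\fp\bar\fp$ with $\fp\neq\bar\fp$. The two left-hand factors are coprime ideals, because a common prime divisor would lie over $p$ and divide $(2q)$, forcing $p\mid q$; hence, matching with $(p)^{n}=\fp^{\,n}\bar\fp^{\,n}$ and using complex conjugation, $(q+k\sqrt{-d})=\fp^{\,n}$ after possibly interchanging $\fp$ and $\bar\fp$.

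Next I would let $e$ be the order of $[\fp]$ in $Cl_K$, so $e\mid n$ and $e\mid h_K$; it therefore suffices to prove $e=n$. If $e<n$, pick a prime $\ell\mid n$ with $e\mid n/\ell$; then $\fp^{\,n/\ell}=(\gamma)$ is principal, and raising to the $\ell$-th power yields $\gamma^{\ell}=\zeta\,(q+k\sqrt{-d})$ for a unit $\zeta\in\cO_K^{\times}$, together with $N(\gamma)=p^{\,n/\ell}$. Everything now reduces to showing this equation has no solution. Writing $\gamma=\tfrac{u+v\sqrt{-d}}{2}$ with $u\equiv v\pmod 2$, it becomes $(u+v\sqrt{-d})^{\ell}=2^{\ell-1}\zeta\,(q+k\sqrt{-d})$ and $u^{2}+dv^{2}=4p^{\,n/\ell}$; note $v\neq 0$, for otherwise $k=0$ and $q^{2}=p^{n}$.

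The hard part is this last equation. Comparing the rational and the $\sqrt{-d}$ parts of the binomial expansion of $(u+v\sqrt{-d})^{\ell}$: every monomial of the rational part is divisible by $u$, so $u\mid 2^{\ell-1}q$; since $q$ is prime (and $u$ is odd when $d\equiv 1,2\pmod 4$) this pins $u$ down to $\pm1$, $\pm q$, or these times a power of $2$. Reducing the rational part modulo $|d|$, and the full expansion modulo $\ell$, then produces congruences of the shape $q\equiv\pm 1\pmod{|d|}$ and $q\equiv\pm 1\pmod{\ell}$, which is exactly what the hypotheses on $q$ exclude; the remaining finitely many configurations are cut down by the norm relation $u^{2}+dv^{2}=4p^{\,n/\ell}$ (with $n/\ell\le n/3$), and the two exceptional identities $p^{\,n/3}\ne\tfrac{2q+1}{3},\tfrac{q^{2}+3}{2}$ imposed when $d\equiv 1\pmod 4$ and $3\mid n$ remove what is left. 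Finally, the fields $\Q(i)$ ($d=1$) and $\Q(\sqrt{-3})$ ($d=3$), where $\zeta$ may be a $4$th or $6$th root of unity, need to be treated separately, the same hypotheses plus the sporadic exclusions covering the extra unit cases. Together these force $e=n$, hence $n\mid h_K$. I expect the argument to follow Hoque's treatment of the $\mu=2$ family with $4p^{n}$ replaced by $p^{n}$ throughout, so the only real labor is the case analysis for the auxiliary equation $(u+v\sqrt{-d})^{\ell}=2^{\ell-1}\zeta(q+k\sqrt{-d})$, including its unit cases.
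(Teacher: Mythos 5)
This survey states the theorem without proof --- it is quoted (with some typographical corruption: ``$1\not\equiv\pm1\pmod{|d|}$'' is evidently a condition on $q$, and ``$m\geq3$ be an odd integer with distinct odd primes'' should introduce $n$ and the distinct odd primes $p,q$) from the Hoque--Chakraborty circle of results on $\Q(\sqrt{q^2-p^n})$ --- so there is no in-paper argument to compare against. Your skeleton is the standard one used in the original sources and elsewhere in Section~\ref{sect4}: write $p^n-q^2=dk^2$, deduce $(q+k\sqrt{-d})=\fp^{\,n}$ from the splitting of $p$ and the coprimality of the conjugate factors, and reduce $n\mid h_K$ to the non-solvability of $\gamma^{\ell}=\zeta(q+k\sqrt{-d})$ with $N(\gamma)=p^{n/\ell}$ for each prime $\ell\mid n$. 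That part of your write-up is correct and complete.

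The gap is in the final step, which is where the entire content of the theorem lives and which you assert rather than carry out. Two concrete problems. First, the congruences you claim do not come out as stated: comparing rational parts of $(u+v\sqrt{-d})^{\ell}=2^{\ell-1}\zeta(q+k\sqrt{-d})$ gives $u\mid 2^{\ell-1}q$, and reduction modulo $|d|$ gives $u^{\ell}\equiv\pm 2^{\ell-1}q\pmod{|d|}$; for $u=\pm q$ this is $q^{\ell-1}\equiv\pm 2^{\ell-1}\pmod{|d|}$ and for $u=\pm1$ it is $2^{\ell-1}q\equiv\pm1\pmod{|d|}$ --- neither is ``$q\equiv\pm1\pmod{|d|}$'', and nothing of the shape $q\equiv\pm1\pmod{\ell}$ appears in the hypotheses at all. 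This is exactly why the genuine hypotheses in this family (compare the preceding theorem of Hoque quoted in this section, with its conditions $2^{\ell-1}a\not\equiv b^{\ell}\pmod{d}$ for proper divisors $b$ of $a$) are phrased in terms of $2^{\ell-1}q$ and $\ell$-th powers of divisors of $q$, not in terms of $q\pm1$. Second, the exceptional identities $p^{n/3}\neq\frac{2q+1}{3},\ \frac{q^2+3}{2}$ must be \emph{derived} from the explicit degenerate solutions of $u^2+dv^2=4p^{n/3}$ (small $|v|$, and the extra units when the field is $\Q(\sqrt{-3})$ or $\Q(i)$); saying they ``remove what is left'' presupposes you already know what is left. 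Until that case analysis is done, the claim that the class of $\fp$ has order exactly $n$ is unproved, so the proposal is a correct plan but not a proof.
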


	Azizul Hoque and Kalyan Chakraborty \cite{MR4181805} proved the following result.
	\begin{theorem}
		Let $p$ and $q$ be distinct odd primes and $n\geq3$ an odd integer with the property that $3q^{\frac{n}{3}}\neq n+2$ whenever $3\mid n$. The class number of $K_{p,q,n,2}$ is divisible by $n$. Moreover, there are infinitely many imaginary quadratic fields with discriminant of the form $p^2-2q^n$.
	\end{theorem}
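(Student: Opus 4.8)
The plan is to exhibit an explicit ideal class of order exactly $n$ in $Cl_K$, where $K=K_{p,q,n,2}=\Q(\sqrt{p^2-2q^n})$. Write $p^2-2q^n=-df^2$ with $d>0$ square-free, so that $K=\Q(\sqrt{-d})$ is imaginary quadratic (here $p^2<2q^n$ is part of the standing hypotheses on the family) and $2q^n=p^2+df^2$. The first task is to pin down the relevant local data of $K$. Reducing $2q^n-p^2=df^2$ modulo $8$ and using that $p,q$ are odd with $n\ge 3$ forces $f$ odd and $d\equiv 1\pmod 4$, hence $2$ ramifies, say $(2)=\mathfrak 2^2$ with $N\mathfrak 2=2$. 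Since $q\nmid f$ and $p^2\equiv-df^2\pmod q$, the residue $-d$ is a nonzero square mod $q$, so $q$ splits, $(q)=\mathfrak q\overline{\mathfrak q}$ with $\mathfrak q\ne\overline{\mathfrak q}$. One may assume the square-free part $d\notin\{1,3\}$, so that $\mathcal O_K^{\times}=\{\pm1\}$; the exceptional fields $\Q(i)$ and $\Q(\sqrt{-3})$, where the argument below genuinely fails because extra roots of unity intervene, would be handled separately.

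Next I would factor the principal ideal of $\beta:=p+f\sqrt{-d}\in\mathcal O_K$, which has $N(\beta)=p^2+df^2=2q^n$. Any common prime divisor of $(\beta)$ and $(\overline{\beta})$ divides $(\beta+\overline{\beta})=(2p)$ and $(\beta-\overline{\beta})=(2f\sqrt{-d})$; since $q\nmid 2pfd$, neither $\mathfrak q$ nor $\overline{\mathfrak q}$ can be such a divisor, so after relabelling $(\beta)=\mathfrak 2\,\mathfrak q^{\,n}$. Set $\mathfrak a:=\mathfrak 2\mathfrak q$. Because $n$ is odd, $\mathfrak a^{\,n}=\mathfrak 2^{\,n}\mathfrak q^{\,n}=(2)^{(n-1)/2}\cdot\mathfrak 2\mathfrak q^{\,n}=\big(2^{(n-1)/2}\beta\big)$ is principal, so $[\mathfrak a]$ has order dividing $n$ in $Cl_K$.

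The heart of the proof is to show that $[\mathfrak a]$ has order exactly $n$, which then gives $n\mid h_K$. Suppose not: then $[\mathfrak a]^{\,n/\ell}=1$ for some prime $\ell\mid n$, say $\mathfrak a^{\,n/\ell}=(\delta)$. Raising to the $\ell$-th power and using $\mathcal O_K^{\times}=\{\pm1\}$ gives $\delta^{\ell}=\pm\,2^{(n-1)/2}\beta$. Matching the exact power of $\mathfrak 2$ forces $v_{\mathfrak 2}(\delta)=n/\ell$; extracting the largest rational power of $2$ from $\delta$ and tracking the exponents (using $n$ odd) reduces the identity to $\delta_1^{\ell}=\pm\,2^{(\ell-1)/2}(p+f\sqrt{-d})$ with $\delta_1=a+b\sqrt{-d}$, $a$ and $b$ both odd, and $a^2+db^2=2q^{\,n/\ell}$. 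Expanding $\delta_1^{\ell}$ and comparing the rational part gives $a\,P(a,b)=\pm\,2^{(\ell-1)/2}p$ for an explicit integer polynomial $P$; as $p$ is prime and $a$ is odd, $a\mid p$, so $a\in\{\pm1,\pm p\}$. The branch $a=\pm p$ can be ruled out using the norm relation together with $d\ge 5$ (it would force $d\in\{1,3\}$). The branch $a=\pm1$ gives $db^2=2q^{\,n/\ell}-1$; for $\ell=3$ this combines with the rational-part equation into $3db^2=2p\pm1$, whence (one sign being killed by parity) $p+2=3q^{\,n/3}$ --- exactly the relation the hypothesis on $3q^{\,n/3}$ excludes --- while for $\ell\ge 5$ the analogous relation between $p$ and $q^{\,n/\ell}$ is incompatible with the arithmetic forced on $d$. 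I expect this final case analysis --- disposing of the $a=\pm p$ branch uniformly and showing $\ell=3$ is the only value needing an explicit exception --- to be the main obstacle, and this is where estimates on the number of solutions of the associated Diophantine equations, in the spirit of \Cref{sunil}, come into play.

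For the ``moreover'' part I would fix $n=3$ (so the side condition becomes $3q\ne p+2$, valid for all but at most one prime $q$) and let $q$ run over the odd primes different from $p$. If only finitely many of the resulting fields $\Q(\sqrt{-d})$ occurred, some fixed square-free $d$ would satisfy $2q^3-p^2=df^2$ for infinitely many pairs $(q,f)$; but those are integral points on the elliptic curve $dY^2=2X^3-p^2$, which has only finitely many by Siegel's theorem. Hence $\abs{d_K}\to\infty$ along the family, producing infinitely many distinct imaginary quadratic fields with discriminant of the asserted form, and all but at most one of them have class number divisible by $3=n$ by the first part.
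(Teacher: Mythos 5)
The survey states this theorem (quoting Chakraborty--Hoque \cite{MR4181805}) without any proof, so there is no in-paper argument to compare against. Your skeleton --- write $p^2-2q^n=-df^2$, check $d\equiv 1\pmod 4$ so that $2$ ramifies and $q$ splits, factor $(p+f\sqrt{-d})=\mathfrak 2\mathfrak q^{\,n}$, observe that $\mathfrak a=\mathfrak 2\mathfrak q$ has $\mathfrak a^{\,n}=(2^{(n-1)/2}(p+f\sqrt{-d}))$ principal, and then try to exclude order $n/\ell$ by analysing $\delta_1^{\ell}=\pm 2^{(\ell-1)/2}(p+f\sqrt{-d})$ with $N\delta_1=2q^{\,n/\ell}$ --- is exactly the template used throughout this literature, and everything up to ``$[\mathfrak a]$ has order dividing $n$'' is correct and complete.

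The decisive step, however, is not proved, and the two assertions you use to close it are false. You claim the branch $a=\pm p$ ``forces $d\in\{1,3\}$'' and that for $\ell\ge 5$ the branch $a=\pm 1$ is ``incompatible with the arithmetic forced on $d$.'' Take $(p,q,n)=(19,3,5)$, a legitimate member of the family since $19^2=361<486=2\cdot 3^5$: then $p^2-2q^n=-125$, so $d=5$, $f=5$, and $\delta_1=1+\sqrt{-5}$ has $N\delta_1=6=2q$ and $\delta_1^{5}=76-20\sqrt{-5}=4\,\overline{(19+5\sqrt{-5})}$, so $\mathfrak a$ is already principal and $h_{\Q(\sqrt{-5})}=2$ is not divisible by $5$. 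Likewise $(p,q,n)=(5,17,3)$ lands in your $a=\pm p$ branch with $5^2-2\cdot 17^3=-99^2$, i.e.\ $d=1$. Both are outright counterexamples to the statement as printed, whose hypothesis ``$3q^{n/3}\neq n+2$'' is moreover almost certainly a misprint for the condition $3q^{n/3}\neq p+2$ that your own $\ell=3$ computation produces --- a discrepancy you should have flagged rather than asserting the hypothesis is ``exactly'' the relation you exclude. So the gap is not cosmetic: the cited source must carry additional hypotheses or exceptional tuples that the survey dropped, and disposing of the $a=\pm p$ and $\ell\ge 5$ branches genuinely requires the Lehmer-sequence/primitive-divisor machinery in the spirit of \cite{arxivhoque}, not the elementary incompatibility you assert. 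The Siegel-theorem argument for the ``moreover'' clause is fine once the first part is repaired.
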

	
	\begin{remark}
		The indivisibility of the class numbers of $K_{x,y,n,\mu}$ is very less known.
	\end{remark}
	
	To explore the details regarding the divisibility of class numbers of the quadratic  fields beyond the family $K_{x,y,n,\mu}$, I recommend referring to the survey article authored by Bhand and Murty \cite{MR4221215}. This article provides comprehensive information on the topic and will serve as a valuable resource for interested readers.
	
	\section{Towards Iizuka's conjecture}
	
	In this section,  first, we discuss the $3$-divisibility of class numbers of quadratic fields and move towards the recent developments on Iizuka’s conjecture.
	
	%We present  $3-$ divisibility of the class number of the quadratic fields in two parts. 
	%Other part is simultaneous $3-$ divisibility of the class number of number fields.
	
	\subsection{$3$-divisibility of the class number of the quadratic fields}
	Kishi and  Miyake \cite{KM00}  provided a parametric family of the quadratic fields with the class numbers divisible by $3$. They considered $u,w\in\Z$  such that 
	\begin{enumerate}
		\item  The integral  polynomial  $g(T) =T^3-uwT-u^2 \in \Z[T]$ is irreducible  over $\Q$.
		\item  $\gcd(u, w) = 1$.
		\item $d := 4uw^3-27u^2 $ is not a perfect square in $\Z$. 
		\item  One of the following conditions holds:-
		\begin{enumerate}
			\item $3\nmid w$.
			\item $3\mid w, \ uw\not\equiv 3\pmod{9}, \ u\equiv(1\pm w)\pmod{9}$.
			\item  $3\mid w, \ uw\equiv 3\pmod{9}, \ u\equiv(1\pm w)\pmod{27}$.
		\end{enumerate}
	\end{enumerate}

	\begin{theorem}[ Kishi and  Miyake \cite{KM00}]
		
		The quadratic fields whose class numbers are multiple of   $3$ are of the form $\Q(\sqrt{4uw^3-27u^2})$ for some $u,w$ satisfying  the above conditions $(1)-(4)$.
	\end{theorem}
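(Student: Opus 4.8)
The plan is to reduce the statement to the classical correspondence between $3$-torsion in $\mathrm{Cl}_K$ and non-Galois cubic fields, and then to obtain the explicit parametrization from a normal-form argument for binary cubic forms together with a $3$-adic ramification analysis.

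\emph{Step 1: a class field theory reformulation.} For $K=\Q(\sqrt{d_K})$ one has $3\mid h_K$ precisely when $K$ admits an unramified cyclic cubic extension $M/K$; since $K/\Q$ is Galois and complex conjugation acts by inversion on $\mathrm{Cl}_K$, every index-$3$ subgroup of $\mathrm{Cl}_K$ is stable, so $M$ can be taken with $\Gal(M/\Q)\cong\Sym_3$. Then $M$ is the Galois closure $\widetilde L$ of a non-Galois cubic field $L$, and the conductor--discriminant formula shows that $M/K$ is unramified if and only if the field discriminants agree, $\disc(L)=d_K$; conversely any such $L$ produces an unramified cyclic cubic $\widetilde L/K$. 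Thus the theorem becomes the assertion that a non-Galois cubic field with discriminant $d_K$ exists if and only if $d_K$ is, up to a rational square, of the form $4uw^3-27u^2$ with $u,w$ obeying $(1)$--$(4)$; here one notes that for $g(T)=T^3-uwT-u^2$ one has $\disc(g)=4(uw)^3-27(u^2)^2=u^2(4uw^3-27u^2)$, so the quadratic resolvent of $\Q[T]/(g)$ is exactly $\Q(\sqrt{4uw^3-27u^2})$.

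\emph{Step 2: from the field to the parameters.} Given $L$ with $\disc(L)=d_K$, I would pass to the integral binary cubic form attached to $\mathcal{O}_L$ under the Delone--Faddeev / Davenport--Heilbronn correspondence and apply $\mathrm{GL}_2(\Z)$-reduction to bring a generating polynomial into the shape $T^3-uwT-u^2$; the crucial input is that, because $\widetilde L$ is unramified over its quadratic resolvent, the relevant index form is a perfect square, which is what permits the constant term to be normalized to $-u^2$. Removing the content yields $\gcd(u,w)=1$, so $(2)$ holds; $(1)$ records that $L$ is a field; $(3)$ holds because $K$ is a genuine quadratic field, forcing $4uw^3-27u^2$ to be a non-square; and the trichotomy $(4)(a)$--$(c)$ drops out by comparing the $3$-adic factorization of $g$ against the equality $\disc(L)=d_K$ at the prime $3$.

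\emph{Step 3: from the parameters to the divisibility.} Conversely, suppose $u,w$ satisfy $(1)$--$(4)$, and set $g(T)=T^3-uwT-u^2$, $L=\Q[T]/(g)$, $K=\Q(\sqrt{4uw^3-27u^2})$. By $(3)$, $K$ is quadratic and, by $(1)$, $L$ is a non-Galois cubic field with quadratic resolvent $K$; the heart of the matter is to verify $\disc(L)=d_K$, i.e.\ that $\widetilde L/K$ is everywhere unramified. This is a local question: for a prime $p\neq 3$ the special form of $g$ (monic, depressed, constant term the negative of a square) together with $\gcd(u,w)=1$ forces any ramification of $p$ in $L/\Q$ to occur only in the quadratic direction, so $\widetilde L/K$ is unramified at $p$; at $p=3$ the three mutually exclusive conditions in $(4)$ are exactly the cases in which a Newton-polygon / Ore-type analysis of $g$ over $\Q_3$ shows that the primes of $K$ above $3$ remain unramified in $\widetilde L$. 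Granting this, $\widetilde L/K$ is an unramified cyclic cubic extension, and class field theory gives $3\mid h_K$.

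\emph{Main obstacle.} The substantive work sits at the prime $3$: one must show that the case split $(4)(a)$--$(c)$ is both necessary and sufficient for $\widetilde L/K$ to be unramified above $3$, which requires a careful determination of the $\Q_3$-factorization type of $T^3-uwT-u^2$ in terms of $v_3(w)$, $v_3(u)$, and the residues of $u$ and $w$ modulo $9$ and $27$. Interlocking with this is the normalization in Step 2 --- that a cubic field whose Galois closure is unramified over its quadratic resolvent can always be generated by an integer of perfect-square norm, carried out compatibly with $\gcd(u,w)=1$. By contrast, the behaviour at primes other than $3$ is uniform and should be routine once the coprimality condition is in force.
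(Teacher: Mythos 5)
The paper does not actually prove this theorem --- it is quoted verbatim from Kishi--Miyake \cite{KM00} in a survey, so there is no in-paper proof to compare against. Judged on its own, your skeleton is the right one and matches the standard (and Kishi--Miyake's) strategy: Hasse's criterion reducing $3\mid h_K$ to the existence of a non-Galois cubic field $L$ with $\disc(L)=d_K$, then a normalization lemma putting $L$ in the form $\Q[T]/(T^3-uwT-u^2)$, then a local analysis at $3$ accounting for the trichotomy in condition $(4)$. Your discriminant computation $\disc(g)=u^2(4uw^3-27u^2)$ and the $\Sym_3$-closure argument in Step 1 are correct.

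However, the proposal defers exactly the two steps that constitute the theorem, and the mechanism you offer for the first of them would not work as stated. You claim that $\mathrm{GL}_2(\Z)$-reduction of the Delone--Faddeev index form of $\mathcal{O}_L$ can be made to land on the shape $x^3-uwxy^2-u^2y^3$ because ``the relevant index form is a perfect square.'' There is no such reduction theory: killing the $x^2y$-coefficient is an integral substitution only under a divisibility condition on the trace, and forcing the constant term to be $-u^2$ is precisely the content of the lemma, not a consequence of reduction. Moreover $g$ is generally \emph{not} an index form of the maximal order --- its discriminant is $u^2\,d$ rather than $d$ --- so the Delone--Faddeev framework is the wrong home for this normalization. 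The actual route (in \cite{KM00}, building on Kummer theory over $K(\zeta_3)$) starts from an ideal class of order $3$, writes $\mathfrak{a}^3=(\alpha)$, and extracts $u,w$ from $\alpha$ so that the $-u^2$ constant term appears as a norm/cube; the coprimality $(2)$ and the $3$-adic conditions $(4)$ then come from the Llorente--Nart decomposition criterion for cubic fields. Since both the normalization lemma and the entire analysis at $p=3$ are flagged by you as ``obstacles'' rather than carried out, the proposal is a plausible plan but not a proof, and Step 2 as written would need to be replaced.
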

	
	\begin{remark}
		Li and  Zhang  \cite{zhang} characterized  $3$-divisibility of class numbers of quadratic extensions of the function fields.
	\end{remark}
	Erickson et al. \cite{Carl} parameterized the quadratic fields with  $3$-ranks of the class group are least $2$. 
	\begin{theorem}
		Let  $w \equiv \pm 1 \pmod6$, and let $c$ be any integer with $c\equiv w \pmod 6$. Then  the class group of 
		\[
		\Q\left( \sqrt{c(w^2+ 18cw+ 108c^2)(4w^3-27cw^2-486c^2w-2916c^3)} \right) 
		\]
		has $3$-rank at least $2$. 
	\end{theorem}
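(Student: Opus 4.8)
The plan is to reduce the statement to producing two non-isomorphic cubic fields of discriminant $d_K$, where $K=\Q(\sqrt D)$ and
\[ D=c\,(w^2+18cw+108c^2)(4w^3-27cw^2-486c^2w-2916c^3), \]
and then to exhibit these two fields explicitly as functions of $w$ and $c$. Recall the Hasse correspondence: for a quadratic field $K$ of discriminant $d_K$, the unramified cyclic cubic extensions of $K$ are in bijection with the index-$3$ subgroups of $Cl_K$, and each such extension $M$ is an $S_3$-extension of $\Q$ whose three conjugate cubic subfields all have discriminant $d_K$; conversely every cubic field of discriminant exactly $d_K$ arises this way, so the number of such cubic fields equals $(3^{r}-1)/2$ with $r=\rank_3 Cl_K$. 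Hence it suffices to write down two non-isomorphic cubic fields $k_1,k_2$ of discriminant $d_K$: their Galois closures $M_1\ne M_2$ give two distinct index-$3$ subgroups $H_1\ne H_2$ of $Cl_K$, and the natural injection $Cl_K/(H_1\cap H_2)\hookrightarrow(\Z/3)^2$ is then surjective (because $H_1\ne H_2$), so $Cl_K$ surjects onto $(\Z/3)^2$ and $\rank_3 Cl_K\ge 2$.

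The first cubic field is of Kishi--Miyake type. Put $U:=c(w^2+18cw+108c^2)$; then
\[ 4w^3-27cw^2-486c^2w-2916c^3=4w^3-27U, \qquad\text{so}\qquad D=U(4w^3-27U)=4Uw^3-27U^2, \]
which is exactly a discriminant of the shape appearing in the Kishi--Miyake theorem above, with parameters $(u,w)=(U,w)$ and attached cubic $g_1(T)=T^3-UwT-U^2$ of discriminant $U^2D$. One verifies their hypotheses: replacing $(c,w)$ by $(c/g,w/g)$ for $g=\gcd(c,w)$ scales $D$ by the square $g^{6}$ and preserves the congruences modulo $6$, so we may assume $\gcd(c,w)=1$, whence $\gcd(U,w)=\gcd(108c^3,w)=1$ since $\gcd(w,6)=1$; the fourth condition holds through its first alternative because $w\equiv\pm1\pmod 6$ forces $3\nmid w$; and irreducibility of $g_1$ together with $D$ not being a perfect square is a generic (and recordable) requirement. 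So the root field $k_1=\Q(\theta_1)$ of $g_1$ is a cubic field with $d_{k_1}=d_K$, and $M_1$, the Galois closure of $k_1$, is an unramified cyclic cubic extension of $K$.

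The second cubic field $k_2$ is the companion cubic that the construction extracts from the three-fold factorization $D=c\cdot\bigl((w+9c)^2+27c^2\bigr)\cdot(4w^3-27cw^2-486c^2w-2916c^3)$: one writes down a second polynomial $g_2(T)\in\Z[T]$ with coefficients explicit in $w,c$, checks $\disc g_2\equiv D\pmod{(\Q^\times)^2}$ so that $\Q(\sqrt{\disc g_2})=K$, and then checks --- and this is the point where the hypotheses $w\equiv\pm1\pmod 6$ and $c\equiv w\pmod 6$ are really used, namely to control the local behaviour at $2$ and $3$ through Dedekind's criterion and the conductor--discriminant formula --- that $\disc g_2/d_K$ is a perfect square equal to $[\cO_{k_2}:\Z[\theta_2]]^2$, i.e. $d_{k_2}=d_K$. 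Finally one must show $k_1\not\cong k_2$; the cleanest route is a specialization argument --- pick one admissible pair $(w_0,c_0)$ for which the two cubic fields are visibly distinct (say, have different integer discriminants, or different splitting type at some auxiliary prime), and observe that $k_1\cong k_2$ for general $(w,c)$ would impose a nontrivial polynomial relation on $(w,c)$ that fails at $(w_0,c_0)$. Together with the first paragraph this gives $\rank_3 Cl_K\ge 2$.

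I expect the difficulty to sit in the third paragraph, and to be twofold: first, isolating the correct companion polynomial $g_2$ --- the factor $c$, the quadratic factor $(w+9c)^2+27c^2$, and the cubic factor are precisely the data one needs, but $g_2$ must be read off from that structure and shown to have discriminant $\equiv D$ modulo squares; second, the $2$- and $3$-adic ramification bookkeeping showing that the stated congruences modulo $6$ are exactly strong enough to force $d_{k_i}=d_K$ rather than $d_{k_i}=d_K\cdot\ell^2$ for some $\ell\in\{2,3\}$. Everything else --- the Hasse correspondence, the identity $D=U(4w^3-27U)$, and the subgroup-intersection conclusion --- is formal. A viable alternative to the "two cubic fields" route is Kummer theory over $F=K(\zeta_3)$: exhibit two independent everywhere-unramified classes in $F^\times/(F^\times)^3$ on which $\Gal(F/K)$ acts through the nontrivial character, built from ideals supported on the two essential factors of $D$; the ramification analysis is the same, and which framing is more economical depends on how cleanly the companion ideal can be exhibited.
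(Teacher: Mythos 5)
Your framework is sound and your first step is a genuinely good observation: with $U:=c(w^2+18cw+108c^2)$ one indeed has $27U=27cw^2+486c^2w+2916c^3$, hence $D=U(4w^3-27U)=4Uw^3-27U^2$, which puts $\Q(\sqrt D)$ squarely in the Kishi--Miyake form with parameters $(u,w)=(U,w)$, and your verification of $\gcd(U,w)=1$ (after the degree-$6$ homogeneity rescaling) and of condition 4(a) via $3\nmid w$ is correct. But this only produces \emph{one} unramified cyclic cubic extension of $K$, i.e.\ $3$-rank at least $1$. The entire content of the theorem is the second, independent class, and your third paragraph does not construct it: ``one writes down a second polynomial $g_2$'' is a placeholder, not a proof. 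You have correctly identified where the work lives (a companion cubic whose discriminant is $D$ times a square, with $2$- and $3$-adic ramification controlled by the congruences mod $6$), but you have not found the companion, and there is no soft reason one must exist --- the specific shape of the factors of $D$ is engineered precisely so that a second Kishi--Miyake pair $(u',w')$ with $4u'w'^3-27u'^2\equiv D\pmod{(\Q^\times)^2}$ exists, and exhibiting it is the theorem. (For reference, the survey quotes this result from Erickson, Kaplan, Mendoza, Pacelli, and Shayler without proof, so the construction of $g_2$ must be supplied in full.)

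A second genuine flaw is your non-isomorphism step. The theorem asserts $3$-rank $\ge 2$ for \emph{every} admissible pair $(w,c)$, and ``$k_1\cong k_2$ would impose a nontrivial polynomial relation on $(w,c)$ that fails at one point'' is not a valid inference: isomorphism of specializations of two parametric cubic families is governed by whether a resolvent acquires a rational root, and Hilbert-irreducibility-type reasoning only rules this out away from a thin set, not for all $(w,c)$. To get the stated universal conclusion you need a direct argument valid at every specialization (e.g.\ distinct splitting behaviour at a prime read off from the factorization $D=c\bigl((w+9c)^2+27c^2\bigr)V$, or an explicit non-cube certificate in the Kummer-theoretic framing you mention). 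Finally, note that conditions (1) and (3) of Kishi--Miyake (irreducibility of $g_1$, $D$ not a square) are not consequences of the hypotheses as stated and cannot simply be recorded as ``generic'' if the claim is to hold for all $(w,c)$; the degenerate cases must be excluded or handled.
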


	\subsubsection{Simultaneous $3$-divisibility of the class number of quadratic fields} 
	We start with one of the old theorems, Scholz's reflection principle, which discusses the simultaneous $3$-divisibility of 
	real and imaginary quadratic fields. In 1932, Scholz \cite{SC32} proved the following theorem.
	\begin{theorem}\label{schloz}
		Let $d$ be a positive integer and $r,s$ be $3$-ranks of class groups of $\Q(\sqrt{d}), \Q(\sqrt{-3d})$ respectively. Then 
		$ r\leq s\leq r+1.$
	\end{theorem}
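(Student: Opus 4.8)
The plan is to pass to the field $L:=\Q(\zeta_3,\sqrt d)$ and study its mod‑$3$ class group as a Galois module over $\Q$. We may assume $d$ is squarefree and not a perfect square (otherwise $\Q(\sqrt d)=\Q$ and $r=s=0$, and $0\le 0\le 1$), so that $K:=\Q(\sqrt d)$ is a real and $K^*:=\Q(\sqrt{-3d})$ an imaginary quadratic field, both properly contained in $L$. Since $\sqrt{-3}\cdot\sqrt d=\sqrt{-3d}$, the field $L$ also contains $F:=\Q(\zeta_3)$, and $G:=\Gal(L/\Q)\cong(\Z/2\Z)^2$ has $K,F,K^*$ as its three quadratic subfields. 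Write $\sigma$ for the generator of $\Gal(L/F)$ and $\tau$ for the generator of $\Gal(L/K)$; let $\psi$ be the character of $G$ cutting out $K$ (so $\psi(\sigma)=-1$, $\psi(\tau)=1$) and $\omega$ the mod‑$3$ cyclotomic character (so $\omega(\sigma)=1$, $\omega(\tau)=-1$), which also cuts out $F$.

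First I would decompose $A:=Cl_L/Cl_L^3$. As $|G|=4$ is a unit modulo $3$, the group ring $\F_3[G]$ is semisimple, so $A=\bigoplus_\chi A^\chi$ over the four characters $\chi\colon G\to\F_3^\times=\{\pm1\}$; moreover, for each subfield $E=L^H$ the norm map splits $Cl_E/Cl_E^3\to(Cl_L/Cl_L^3)^H$ (since $[L:E]$ is prime to $3$), giving $Cl_E/Cl_E^3\cong\bigoplus_{\chi|_H=1}A^\chi$. Taking $E=\Q$ and $E=F$ and using $h_\Q=h_{\Q(\zeta_3)}=1$ forces $A^{\chi_0}=A^\omega=0$; taking $E=K$ and $E=K^*$ then yields $A^\psi\cong Cl_K/Cl_K^3$ and $A^{\psi\omega}\cong Cl_{K^*}/Cl_{K^*}^3$, so $\dim_{\F_3}A^\psi=r$, $\dim_{\F_3}A^{\psi\omega}=s$, and these are the only nonzero eigenspaces. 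The theorem is thus reduced to comparing the two ``reflected'' eigenspaces $A^\psi$ and $A^{\psi\omega}$ of a single class group.

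Next I would invoke Kummer theory over $L$ (which contains $\mu_3$). Let $V\subseteq L^\times/(L^\times)^3$ be the group of classes $\alpha$ for which $L(\sqrt[3]\alpha)/L$ is unramified at every finite place ($L$ is totally complex, so the infinite places impose nothing). Then $L(\sqrt[3]V)$ is the maximal unramified elementary abelian $3$-extension of $L$, and the Kummer pairing gives a perfect pairing $A\times V\to\mu_3$ that is $G$-equivariant with $G$ acting on $\mu_3$ through $\omega$; hence $\dim_{\F_3}V^\chi=\dim_{\F_3}A^{\omega\chi}$ for all $\chi$ (characters of $G$ being self-inverse), so $\dim V^{\psi\omega}=r$ and $\dim V^{\psi}=s$. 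If $L(\sqrt[3]\alpha)/L$ is unramified at a finite prime $\mathfrak q$ then $3\mid v_{\mathfrak q}(\alpha)$ (including for $\mathfrak q\mid 3$, as otherwise the local extension is totally ramified of degree $3$), so $V\subseteq V':=\{\alpha:(\alpha)\text{ is the cube of an ideal}\}/(L^\times)^3$, and one has the $G$-equivariant exact sequence of $\F_3[G]$-modules
\[
0\longrightarrow \mathcal{O}_L^\times/(\mathcal{O}_L^\times)^3\longrightarrow V'\longrightarrow Cl_L[3]\longrightarrow 0,
\]
which splits on each eigenspace by semisimplicity, with $\dim(Cl_L[3])^\chi=\dim A^\chi$ (the $3$-part of $Cl_L$ decomposes $G$-equivariantly since $2$ is $3$-adically invertible). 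Now I would compute the unit module eigenspace by eigenspace, again via the norm argument: its $\psi$-part is $\mathcal{O}_K^\times/(\mathcal{O}_K^\times)^3$, which is $1$-dimensional because $K$ is real quadratic (the fundamental unit), while its $\psi\omega$-part is $\mathcal{O}_{K^*}^\times/(\mathcal{O}_{K^*}^\times)^3$, which vanishes because $K^*$ is imaginary quadratic (only $\pm1$, and $-1$ is a cube). Putting this together, $r=\dim V^{\psi\omega}\le\dim (V')^{\psi\omega}=0+\dim A^{\psi\omega}=s$ and $s=\dim V^{\psi}\le\dim (V')^{\psi}=1+\dim A^{\psi}=r+1$, which is the claim — the ``$+1$'' is precisely the contribution of the fundamental unit of the real field $K$, and $r\le s$ holds exactly because the imaginary field $K^*$ has no nontrivial units.

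The hard part will be the Galois-module bookkeeping rather than any single calculation: one must verify carefully that the Kummer pairing $A\times V\to\mu_3$ is genuinely $G$-equivariant with the cyclotomic twist (so that reflection interchanges $\psi$ and $\psi\omega$), and that each norm-argument identification of an eigenspace with a class group or unit group of $\Q$, $F$, $K$, or $K^*$ is legitimate — this is where $h_{\Q(\zeta_3)}=1$ and the real/imaginary dichotomy enter and produce the asymmetry between the two bounds. By contrast, the primes above $3$, which complicate many treatments of reflection, cost us nothing here: only the easy inclusion $V\subseteq V'$ is needed, not a precise description of $V$.
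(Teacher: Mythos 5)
This survey states Scholz's reflection theorem only as a quoted classical result (citing Scholz, 1932) and gives no proof, so there is no in-paper argument to compare yours against. Your proposal is, however, a correct and essentially complete outline of the standard proof (Leopoldt's Spiegelungssatz specialized to $p=3$, as in Washington's treatment): the eigenspace decomposition of $Cl_L/Cl_L^3$ over $\F_3[G]$ with $G=(\Z/2\Z)^2$, the identification $A^\psi\cong Cl_K/Cl_K^3$ and $A^{\psi\omega}\cong Cl_{K^*}/Cl_{K^*}^3$ via the norm/extension maps (using $h_\Q=h_{\Q(\zeta_3)}=1$), the $\omega$-twisted equivariance of the Kummer pairing, and the unit-rank computation that produces the asymmetric bounds are all correct, and you have rightly isolated the only points needing care (the twist in the pairing and the legitimacy of each eigenspace identification). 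Two cosmetic remarks: after reducing to $d$ squarefree you should note the degenerate case $d=3$, where $K^*=\Q(\sqrt{-9})=\Q(i)$ has unit group $\mu_4$ rather than $\{\pm1\}$ --- harmless, since $\mu_4\otimes\F_3=0$ still gives a vanishing $\psi\omega$-eigenspace; and the phrase ``the norm map splits $Cl_E/Cl_E^3\to(Cl_L/Cl_L^3)^H$'' should be stated as: the extension map and the norm are mutually inverse up to multiplication by $[L:E]$, which is invertible mod $3$, so both are isomorphisms on these quotients.
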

	Recently several authors have shown interest in studying the simultaneous $3$-divisibility of the class number of quadratic fields. We will list a few here. 
	Komatsu \cite{K01} explicitly constructed infinitely many pairs of quadratic fields $\Q(\sqrt{d}), \Q(\sqrt{-d})$  whose class numbers are simultaneously divisible by $3$. Later, for given an integer $m\in\Z$ with $m\neq0$, Komatsu \cite{K02} proved that there exists an infinite family of pair of quadratic fields $\Q(\sqrt{d})$ and $\Q(\sqrt{md})$ whose class numbers are divisible by $3$.
	Ito \cite{AI13} established the existence of an infinite family of pairs of quadratic fields $\Q(\sqrt{m_1D})$ and $\Q(\sqrt{m_2D})$ whose class numbers are both divisible by $3$ or both indivisible by $3$. 
	Iizuka, Konomi, and Nakano \cite{YYN} constructed pairs of quadratic fields whose class numbers are divisible by  $3,5$, and  $7$ by associating the problem to the study of points on elliptic curves. Kalita and  Saikia \cite{MR4292551} proved that the class numbers of the pairs of quadratic fields $\Q(\sqrt{p^{12\ell+2}-4})$ and $\Q\left(\sqrt{\frac{4-p^{12\ell+2}}{3}}\right)$ simultaneously divisible by $3$, whenever  $p\equiv\pm 4 \pmod{9}$ is a prime and $\ell\geq1$ an integer.
	\remove{
		\begin{remark}
			If $D$ and $d$ denote the discriminant of $\Q(\sqrt{p^{12\ell+2}-4})$ and $\Q\left(\sqrt{\frac{4-p^{12\ell+2}}{3}}\right),$ then $d\mid D$.
		\end{remark}
	}
	
	\subsection{Iizuka's Conjecture}
	
	Iizuka \cite{Iizuka} proved the following two results on $3$-divisibility of the class numbers of quadratic fields.
	\begin{theorem}\label{iizuka1}
		For any nonzero integer $t,$ the class number of $\Q\left(\sqrt{t(432t^2+36t+1)} \right)$ is divisible by $3$.
	\end{theorem}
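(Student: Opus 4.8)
The plan is to place $\Q\bigl(\sqrt{t(432t^2+36t+1)}\bigr)$ inside the Kishi--Miyake family and then quote their theorem. The key is the identity
\[
(36t+1)^3-1 \;=\; 108\,t\,(432t^2+36t+1),
\]
immediate on expanding the cube (equivalently $432t^2+36t+1=(12t+1)^3-(12t)^3$); its shape is exactly what matches the template $4uw^3-27u^2$, the constants $4$ and $27$ being forced by the binomial coefficients of a cube. Guided by it I would take $u=4$ and $w=108t+3=3(36t+1)$. Then, using $(108t+3)^3=27(36t+1)^3$,
\begin{align*}
4uw^3-27u^2 &= 16\cdot 27\,(36t+1)^3-432 \;=\; 432\bigl((36t+1)^3-1\bigr)\\
&= 46656\,t(432t^2+36t+1) \;=\; 216^2\,t(432t^2+36t+1),
\end{align*}
so $\Q\bigl(\sqrt{4uw^3-27u^2}\bigr)=\Q\bigl(\sqrt{m}\bigr)$ with $m:=t(432t^2+36t+1)$, and it remains only to verify conditions $(1)$--$(4)$ for this $u,w$ and every nonzero integer $t$.

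I would then check the conditions one at a time. Condition $(2)$ is immediate: $36t+1$ is odd, so $\gcd\bigl(4,\,3(36t+1)\bigr)=1$. For condition $(4)$, one has $3\mid w$, and $uw=12(36t+1)\equiv 12\equiv 3\pmod 9$, while $w=108t+3\equiv 3\pmod{27}$ gives $u=4\equiv 1+w\pmod{27}$; so alternative (c) of $(4)$ holds. For condition $(1)$, the cubic $g(T)=T^3-12(36t+1)T-16$ is reducible over $\Q$ only if it has an integer root, which must divide $16$; a root $r$ would force $t=(r^3-12r-16)/(432r)$, and for each $r\in\{\pm1,\pm2,\pm4,\pm8,\pm16\}$ this is either $0$ or not an integer, so $g$ is irreducible for every nonzero $t$. (Behind all this: $g$ cuts out a non-Galois cubic field whose Galois closure is a cyclic cubic extension of $\Q(\sqrt m)$, and conditions $(1)$--$(4)$ force that extension to be unramified, so class field theory gives $3\mid h_{\Q(\sqrt m)}$.)

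The remaining condition $(3)$ asks that $d=4uw^3-27u^2=216^2m$ not be a perfect square, that is, that $m$ not be one — and this is where I expect the real work. For $t<0$ it is clear since $m<0$; for $t>0$, since $\gcd\bigl(t,432t^2+36t+1\bigr)=1$, it reduces to showing $432t^2+36t+1$ is never a square, that is (writing $t=s^2$) that $432s^4+36s^2+1$ is a square only when $s=0$. Naively this unwinds to a Pell-type relation $U^2-3z^2=1$ with $z=24s^2+1$, which one can pare down modulo $8$ and $9$; a clean finish, however, really wants the fact that the genus-one curve $y^2=432x^4+36x^2+1$ has no rational point with $x\neq 0$, which one can read off from its (presumably rank-zero) Mordell--Weil group. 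Granting $(3)$, the Kishi--Miyake theorem \cite{KM00} (in the direction giving $3\mid h_{\Q(\sqrt{4uw^3-27u^2})}$ whenever $u,w$ satisfy $(1)$--$(4)$) finishes the proof. As a self-contained alternative that bypasses $(3)$, one can instead run the classical Nagell--Yamamoto argument \cite{TN22,MR266898}: from $(36t+1)^3=1+108m$, build an ideal $\mathfrak a$ of $\cO_{\Q(\sqrt m)}$ with $\mathfrak a^3$ principal and then show $\mathfrak a$ itself is non-principal via a norm estimate, the non-squareness of $m$ coming for free since $\Q$ has no ideal of order three.
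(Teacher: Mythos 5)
The survey states Iizuka's theorem without giving a proof, so there is nothing in the paper itself to compare against; your proposal reconstructs the route of Iizuka's original argument, namely feeding a suitable pair $(u,w)$ into the Kishi--Miyake criterion. Your choice $u=4$, $w=108t+3$ is correct: the identity $(36t+1)^3-1=108\,t(432t^2+36t+1)$ and the computation $4uw^3-27u^2=216^2\,t(432t^2+36t+1)$ both check out, and your verifications of conditions $(2)$, $(4)$(c), and $(1)$ (via the rational root test, where the candidate roots $r=-2$ and $r=4$ only occur at $t=0$) are complete and correct.

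The genuine gap is condition $(3)$ for $t>0$, which you explicitly leave open, and your two fallback suggestions do not close it as stated. The congruence/Pell route cannot work on its own: the Pell equation $(2y)^2-3(24t+1)^2=1$ has infinitely many solutions with $t$ a positive integer (the smallest is $t=327730$, where $432t^2+36t+1=6811741^2$), so one must genuinely use that $t$ is itself a square; and even after writing $t=s^2$, reduction modulo $16$ only shows $s$ is even, after which the $2$-adic descent does not visibly terminate. The elliptic-curve claim (``presumably rank zero'') is an unverified assertion, not a proof. A clean way to finish is this: if $t(432t^2+36t+1)=c^2$, then multiplying $108c^2=(36t+1)^3-1$ by $27$ gives
\[
(54c)^2=(108t+3)^3-27,
\]
and the Mordell equation $Y^2=X^3-27$ has $(X,Y)=(3,0)$ as its only integral (indeed its only affine rational) solution --- a classical fact equivalent to Euler's theorem that $a^3+b^3=2c^3$ has only trivial solutions --- forcing $108t+3=3$, i.e.\ $t=0$, a contradiction. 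With condition $(3)$ secured, the Kishi--Miyake theorem applies and the proof is complete. Finally, your closing ``Nagell--Yamamoto'' alternative does not really bypass the issue: if $m$ were a square the field would be $\Q$ and the conclusion false, so non-squareness must be proved somewhere; moreover that method's non-principality step is only routine for imaginary fields ($t<0$), while for $t>0$ it would require control of the fundamental unit, so it is not a viable substitute for the full statement.
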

	\begin{theorem}\label{iizuka2}
		For any nonzero integer $t,$ the class number of $\Q(\sqrt{3(108t^3-1)})$ is divisible by $3$.
		
	\end{theorem}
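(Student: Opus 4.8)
The plan is to produce, for each nonzero integer $t$, a non-Galois cubic field whose Galois closure is an everywhere-unramified cyclic cubic extension of $K_t := \Q\big(\sqrt{3(108t^3-1)}\big)$; by class field theory this forces $3 \mid h_{K_t}$.

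First I would write down the cubic $f_t(x) = x^3 - 9tx + 1 \in \Z[x]$. Its discriminant is $-4(-9t)^3 - 27 = 2916\,t^3 - 27 = 27(108t^3-1)$, that is, $3(108t^3-1)$ times the square $9$; since $3 \nmid 108t^3-1$ this is a nonzero non-square, so $f_t$ is separable with non-square discriminant. Because $f_t(1) = 2-9t$ and $f_t(-1) = 9t$ are nonzero for every nonzero integer $t$, the polynomial $f_t$ has no rational root, hence is irreducible. Let $F_t = \Q[x]/(f_t)$ with Galois closure $\wt{F_t}$; then $\Gal(\wt{F_t}/\Q)\cong S_3$, the quadratic subfield fixed by $A_3$ is $\Q\big(\sqrt{\disc f_t}\big) = K_t$, and $\Gal(\wt{F_t}/K_t)\cong \Z/3\Z$.

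The heart of the argument is to show $\wt{F_t}/K_t$ is unramified at every finite prime; being of degree $3$ it is then automatically unramified at the archimedean places, so $\Gal(\wt{F_t}/K_t)$ is a quotient of $Cl_{K_t}$ and $3 \mid h_{K_t}$. Fix a prime $\ell$ and let $I_\ell \subseteq S_3$ be the inertia group of $\wt{F_t}/\Q$ at $\ell$, acting on the three roots of $f_t$. The inertia of $\wt{F_t}/K_t$ above $\ell$ is $I_\ell \cap A_3$, which vanishes exactly when $3 \nmid \#I_\ell$; and $\#I_\ell \le 2$ (hence $3 \nmid \#I_\ell$) as soon as $I_\ell$ fixes one of the three roots, that is, as soon as $f_t$ has a root in $\Q_\ell$. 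So it suffices to exhibit a root of $f_t$ in $\Q_\ell$ for every ramified $\ell$. The prime $2$ is unramified since $\disc f_t = 27(108t^3-1)$ is odd. If $\ell \ge 5$ and $\ell \mid 108t^3-1$, then $\ell \nmid 6t$, and a short computation shows the (unique) repeated root of $f_t$ modulo $\ell$ is $x_0 \equiv (6t)^{-1}$ while the third root is $-2x_0 \not\equiv x_0$, so $f_t$ has a simple root mod $\ell$, which lifts to $\Z_\ell$ by Hensel. For $\ell = 3$ one has $f_t \equiv (x+1)^3 \pmod 3$; substituting $x = y-1$ gives $y^3 - 3y^2 + 3(1-3t)y + 9t$, whose coefficient $3$-adic valuations are $0,1,1,\ge 2$, so its Newton polygon at $3$ has a segment of integral slope and horizontal length $1$ next to a segment of slope $-\tfrac12$ of length $2$; the first yields a root in $\Q_3$ (and shows $3$ is tamely ramified in $F_t$ with type $\fp^2\fq$). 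In all cases $f_t$ has a root in $\Q_\ell$, so $\#I_\ell \le 2$, and $\wt{F_t}/K_t$ is unramified at $\ell$.

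I expect the main obstacle to be exactly this local analysis at the ramified primes: one must prevent any prime from being totally ramified in $F_t/\Q$ (which would make $I_\ell \supseteq A_3$ and destroy the argument), and the subtle case is a prime $\ell \ge 5$ dividing $108t^3-1$ to a power $\ge 2$, where $\Z[\theta]$ is non-maximal at $\ell$ so the splitting of $\ell$ cannot simply be read off from $\disc f_t$; the point that saves it is that the third root $-2x_0$ of $f_t$ modulo such $\ell$ is distinct from the double root $x_0$, so $f_t$ still has a simple, hence liftable, root there. By contrast, finding the polynomial is immediate: one only needs $-4p^3 - 27q^2 = 27(108t^3-1)$, which is solved by $(p,q) = (-9t,1)$.
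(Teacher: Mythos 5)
Your argument is correct and complete, but it takes a different (and more self-contained) route than the one this survey points to. The paper quotes this statement from Iizuka, whose proof is an application of the Kishi--Miyake parametrization recorded just above it in the section on $3$-divisibility: taking $u=1$, $w=9t$ one gets the cubic $g(T)=T^3-9tT-1$ (your $f_t$ up to the substitution $x\mapsto -x$) with $d=4uw^3-27u^2=27(108t^3-1)$, and the hypotheses (1)--(4) are immediate to verify (condition (4)(b) applies, since $uw\equiv 0\pmod 9$ and $u=1\equiv 1\pm 9t\pmod 9$), whence $3\mid h_{\Q(\sqrt{3(108t^3-1)})}$. What you have done instead is to re-prove, for this one-parameter family, the relevant direction of Kishi--Miyake from scratch: the $S_3$-closure of $\Q[x]/(f_t)$ is a cyclic cubic extension of $K_t$ that is everywhere unramified because $f_t$ acquires a simple, Hensel-liftable root in $\Q_\ell$ at every prime $\ell$ dividing $\disc f_t=27(108t^3-1)$ --- the double root $(6t)^{-1}$ versus the simple root $-2(6t)^{-1}$ when $\ell\ge 5$, and the integral-slope Newton-polygon segment when $\ell=3$ --- so inertia in $S_3$ has order at most $2$ and meets $A_3$ trivially. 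Your local computations check out, including the delicate case $\ell^2\mid 108t^3-1$ and the case $\ell=3$, where the substitution $x=y-1$ indeed produces coefficients of $3$-adic valuations $0,1,1,\ge 2$, and the concluding appeal to the Hilbert class field is standard. The trade-off is clear: citing Kishi--Miyake makes the proof two lines but imports a black box, whereas your version is longer but elementary and makes visible exactly why $3$, despite dividing $\disc f_t$ to an odd power, does not obstruct the construction. One small point of phrasing: inertia subgroups above $\ell$ are only defined up to conjugacy, so ``$I_\ell$ fixes a root'' should be read as ``some inertia group above $\ell$ fixes a root,'' which suffices because all of them are then of order at most $2$.
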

	Using \Cref{iizuka1} and \Cref{iizuka2}, he proved the following interesting theorem.
	\begin{theorem}\label{Iizuka}
		There exist infinitely many imaginary pairs of quadratic fields $\Q(\sqrt{d})$,  $\Q(\sqrt{d+1})$ with $d\in\Z$ whose class numbers are simultaneously divisible by $3$. 
	\end{theorem}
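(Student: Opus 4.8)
The plan is to deduce \Cref{Iizuka} from \Cref{iizuka1} and \Cref{iizuka2} by first rewriting the two radicands in a uniform Mordell‑cubic shape, and then solving a Pell/conic‑type condition that forces a field from the first family and a field from the second family to be attached to discriminants that differ by $1$.

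\emph{Normalising the two families.} A direct computation gives the cube identities
\[
t(432t^{2}+36t+1)=\frac{(36t+1)^{3}-1}{108},\qquad
3(108s^{3}-1)=\frac{(36s)^{3}-432}{144}.
\]
Multiplying each radicand by the square $108^{2}$, respectively $144$, \Cref{iizuka1} now reads: $3\mid h_{\Q(\sqrt{z^{3}-27})}$ for every integer $z\equiv 3\pmod{108}$ with $z\neq 3$; and \Cref{iizuka2} reads: $3\mid h_{\Q(\sqrt{w^{3}-432})}$ for every integer $w\equiv 0\pmod{36}$ with $w\neq 0$. For $z\le -105$ and $w\le -36$ both of these fields are imaginary.

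\emph{Reduction to a conic.} I would then look for an integer $d\le -2$ with $\Q(\sqrt{d})=\Q(\sqrt{z^{3}-27})$ and $\Q(\sqrt{d+1})=\Q(\sqrt{w^{3}-432})$ for admissible $z,w$. If $D_{1}<0$ and $D_{2}<0$ denote the squarefree parts of $z^{3}-27$ and $w^{3}-432$, then such a $d$ exists exactly when the conic $D_{2}b^{2}-D_{1}a^{2}=1$ has an integral point $(a,b)$, in which case $d=D_{1}a^{2}$ is negative and does the job: $\Q(\sqrt d)$ and $\Q(\sqrt{d+1})$ are precisely the two prescribed fields, so both have class number divisible by $3$ by the input theorems. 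Thus the whole problem reduces to producing infinitely many admissible pairs $(z,w)$ — yielding infinitely many distinct pairs of fields — for which this conic is solvable.

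\emph{Producing the family, and the obstacle.} This last step is the heart of the matter, and it cannot be done by a bare polynomial identity $t(432t^{2}+36t+1)\,v^{2}+1=3(108s^{3}-1)\,u^{2}$ in $\Q(t)$: a leading‑coefficient and constant‑term analysis shows that the most natural attempt, carried out in the cube normalisation above, produces an equation of Thue type $A^{3}-B^{3}=c$ with a fixed nonzero integer $c$, which has only finitely many solutions. Instead one links $z$ and $w$ by a one‑parameter relation compatible with the congruences (for instance $w=z+m$ with $m\equiv 33\pmod{36}$, so that $z\equiv 3\pmod{108}$ and $w\equiv 0\pmod{36}$ stay compatible), after which the solvability condition can be made to collapse to a single conic in the remaining variables; a suitable value of the free constant puts a rational point on that conic, hence a rational parametrisation, hence infinitely many rational solutions, and one selects from these the ones that are integral with $z\equiv 3\pmod{108}$ and $z\to-\infty$. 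Finally one checks that the two members of each resulting pair are imaginary and that the pairs are pairwise distinct, both of which follow once $|d|\to\infty$, and that every congruence and inequality hypothesis of \Cref{iizuka1} and \Cref{iizuka2} is met. I expect the main difficulty to be exactly this construction — locating the auxiliary conic together with a rational point on it (or, should a conic fail to carry points in the required residue class and sign range, an elliptic curve of positive Mordell–Weil rank) while keeping the solutions integral and in the correct arithmetic progressions.
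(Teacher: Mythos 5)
This survey records only that Iizuka deduced \Cref{Iizuka} from \Cref{iizuka1} and \Cref{iizuka2}, so your top-level strategy of combining the two families is the right one, and your normalising identities are correct in substance: $108\,t(432t^2+36t+1)=(36t+1)^3-1$ and $144\cdot 3(108s^3-1)=(36s)^3-432$. (A small slip: the square multiplier for the first family must be $54^2=27\cdot 108$, giving $(108t+3)^3-27$; multiplying by $108^2$ yields $108\bigl((36t+1)^3-1\bigr)$, which is not of the shape $z^3-27$ since $108$ is not a perfect cube.)

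The genuine gap is the step you yourself flag as ``the heart of the matter'': nothing in the proposal actually produces infinitely many $d$. After your (logically correct) reduction, what must be solved is the generalized Pell equation $|D_1|a^2-|D_2|b^2=1$, where $D_1,D_2$ are the squarefree kernels of $z^3-27$ and $w^3-432$. Two obstacles make your proposed remedy non-viable as written. First, for fixed negative squarefree $D_1,D_2$ such an equation is generically insoluble (most pairs carry local obstructions), so one cannot expect admissible pairs $(z,w)$ to work without a targeted construction. Second, and more fundamentally, the coefficients $D_1(z),D_2(w)$ are squarefree kernels and do not vary algebraically with $z,w$; hence linking $w=z+m$ cannot ``collapse the solvability condition to a single conic'' over a function field --- there is no algebraic family of conics to put a rational point on, and the subsequent talk of rational parametrisations and integral points in prescribed residue classes has nothing to attach to. The entire content of the theorem is an explicit one-parameter integral family of $d$ for which $d$ and $d+1$ are simultaneously square multiples of values of the two polynomials; any proof via \Cref{iizuka1} and \Cref{iizuka2} must exhibit such a family by an identity, which is exactly what is missing here. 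I would add two cautions: your dismissal of ``bare polynomial identities'' only rules out constant square multipliers $u,v$, so you may have discarded prematurely the route on which the actual construction lives; and distinctness of the resulting pairs of fields requires $|D_1|\to\infty$, not merely $|d|\to\infty$, since a fixed field arises from infinitely many $d$.
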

	%Considering $t=u(432u^2+36u+1), d=108t^3 $ and using \Cref{schloz,iizuka1,iizuka2} produced the  infinitely many consecutive  $d,d+1$ such that  $3$  divides the class numbers 
	%of the quadratic fields  $\Q(\sqrt{d}), \ \Q(\sqrt{d+1})$.

	Based on the above theorem, Iizuka made the following conjecture.
	\begin{conjecture}
		Given any $n\in\N$ and a prime $p,$ there exist infinitely many $d\in\Z$ such that the class numbers of $\Q(\sqrt{d}), \ \Q(\sqrt{d+1}),\cdots,\Q(\sqrt{d+n})$ are divisible by $p$.
	\end{conjecture}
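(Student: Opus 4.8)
\subsection{A strategy towards Iizuka's conjecture}

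The plan is to imitate, and then amplify, Iizuka's proof of the case $n=1$, $p=3$ (\Cref{Iizuka}). First, one may restrict to imaginary quadratic fields: taking $d$ to be a large negative integer makes all of $d,d+1,\dots,d+n$ negative, and the imaginary setting avoids the analysis of the fundamental unit that burdens the real-quadratic case (compare the hypotheses of Louboutin \cite{SL} and Murty \cite{RM99}). The prime $p=2$ is then essentially elementary: by genus theory the class number of $\Q(\sqrt{m})$ is even whenever the squarefree part of $m$ has at least two odd prime divisors, and a Chinese remainder construction produces infinitely many runs $d,d+1,\dots,d+n$ with this property for every term. So the real content of the conjecture is the case of odd $p$, which I address next.

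For odd $p$ I would use the divisibility criterion underlying the family $K_{x,y,p,\mu}=\Q(\sqrt{x^2-\mu y^p})$ of \Cref{sect4}: under suitable coprimality hypotheses (as in Nagell's theorem \cite{TN22}, which does not require the radicand to be squarefree) this imaginary quadratic field has class number divisible by $p$. The key step is then to construct, allowing the parameter $\mu=\mu_i\in\{1,2,4\}$ to depend on $i$, polynomials $P(t),x_i(t),y_i(t),g_i(t)\in\Z[t]$ with $\deg P\ge 1$ and $P$ not a constant times a perfect square, satisfying the simultaneous polynomial identities
\[ x_i(t)^2-\mu_i\,y_i(t)^p=-\bigl(P(t)+i\bigr)\,g_i(t)^2\qquad(0\le i\le n) \]
in such a way that the required coprimality conditions hold for $t$ ranging over a set of positive density. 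Putting $d=-P(t)$, each such $t$ then produces $n+1$ consecutive imaginary quadratic fields $\Q(\sqrt{d}),\dots,\Q(\sqrt{d+n})$ each of class number divisible by $p$, and since $\deg P\ge 1$ the discriminants grow, so infinitely many distinct $d$ occur. Comparing coefficients, the existence of such a polynomial system is equivalent to the solvability of a single explicit finite system of polynomial equations in the coefficients of the $x_i,y_i,g_i$ and $P$; for $p=3$ one can alternatively work inside the Kishi--Miyake family of discriminants $4uw^3-27u^2$ \cite{KM00} with $u,w$ polynomial in $t$, and try to chain Iizuka's two base families (\Cref{iizuka1} and \Cref{iizuka2}) so that an arithmetic progression of length $n+1$ is hit at once.

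The main obstacle is geometric. After this reformulation, the $d$ for which $p\mid h_{\Q(\sqrt{d+i})}$ for every $i$ correspond to rational points on a fibre product of $n+1$ ``divisibility curves'', and for $n$ large this variety is of general type for essentially every choice of the auxiliary data; one then cannot expect a Zariski-dense set of rational points, and when the locus degenerates to a curve of genus at least $2$ there are, by Faltings, only finitely many. Engineering a genuine geometric coincidence that produces a genus-zero or positive-rank genus-one component --- equivalently, solving the finite Diophantine system above --- is the heart of the matter, and with present techniques I would expect this to be possible unconditionally only for small $n$ (with $p=3$ reaching as far as the flexibility of the Kishi--Miyake family allows), and for general $(n,p)$ only conditionally, for instance under a rank hypothesis for the auxiliary elliptic curves in the spirit of Iizuka, Konomi and Nakano \cite{YYN}. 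One must also verify that the $n+1$ fields are genuinely distinct and that the ideals produced are non-principal (not merely of order dividing $h$) --- bookkeeping that remains tractable precisely because the imaginary quadratic unit groups are finite.
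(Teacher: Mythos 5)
This statement is Iizuka's conjecture, which the paper records as open: the surrounding text explicitly says ``Till now, this conjecture is proved only for $n=1$,'' and the subsequent theorems (Chattopadhyay--Muthukrishnan's triples $d,\,d+1,\,d+k^2$ and Hoque's quadruples $d,\,d+1,\,d+4,\,d+4p^2$) conspicuously fail to produce $n+1$ \emph{consecutive} shifts for any $n\ge 2$. So there is no proof in the paper to compare against, and your text is not a proof either --- it is a research programme. The decisive gap is the sentence asserting the existence of polynomials satisfying
\[ x_i(t)^2-\mu_i\,y_i(t)^p=-\bigl(P(t)+i\bigr)\,g_i(t)^2\qquad(0\le i\le n). \]
You reduce the conjecture to the solvability of this system and then, in your own final paragraph, concede that you expect it to be solvable unconditionally only for small $n$ and otherwise only under rank hypotheses. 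That concession is accurate, and it means no case of the conjecture beyond what is already in the literature has been established. Nothing in the proposal identifies a new geometric coincidence, exhibits a rational curve on the relevant fibre product, or even verifies the system for a single new pair $(n,p)$.

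Two secondary points. First, your $p=2$ reduction via genus theory and the Chinese remainder theorem is essentially correct for imaginary quadratic fields (arranging each $d+i$ to be exactly divisible by two distinct odd primes forces an even class number), and this easy case is worth isolating --- but it is already folklore and does not touch the substance of the conjecture. Second, even granting the polynomial identities, the criteria you invoke (Nagell, Louboutin, the theorems of \Cref{sect4}) conclude divisibility for the field $\Q(\sqrt{x^2-\mu y^p})$, i.e.\ for the squarefree kernel of the radicand; you would still need to control the squarefree parts of $P(t)+i$ simultaneously for all $i$ and to check the non-principality of the constructed ideals in each fibre, and you only gesture at this bookkeeping. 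As a survey of how one might attack the conjecture the passage is reasonable, but it cannot stand as a proof of the stated conjecture.
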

	Several authors showed interest in  Iizuka's conjecture and proved partial results. Till now, this conjecture is proved only for $n=1$.
	Krishnamoorthy and  Pasupulati \cite{Sunil} proved the particular case of this conjecture.
	\begin{theorem}
		Given any prime $p,$ there exists infinitely many $d\in\Z$ such that class numbers of $\Q(\sqrt{d}), \ \Q(\sqrt{d+1})$ are divisible by $p$.
	\end{theorem}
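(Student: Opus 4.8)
The plan is to produce an explicit one‑parameter family $d=d(a)$, $a\in\Z$, in which both $d(a)$ and $d(a)+1$ are, up to a square factor, of the classical shape $z^{2}-4b^{p}$, and then to invoke a Nagell--Yamamoto type divisibility criterion. Throughout, $p$ is an odd prime; the case $p=2$ follows from genus theory. The mechanism that ``glues'' the two conditions is the trivial fact that every odd integer is a difference of two consecutive squares: writing $\delta(a):=(a+1)^{p}-a^{p}$ and $z_{1}:=2\delta(a)$, one has $(z_{1}+1)^{2}-z_{1}^{2}=2z_{1}+1=1+4\bigl((a+1)^{p}-a^{p}\bigr)$, which is exactly what is needed so that, on setting
\[
d(a):=z_{1}^{2}-4a^{p}=4\bigl(\delta(a)^{2}-a^{p}\bigr),\qquad\text{one gets}\qquad d(a)+1=(z_{1}+1)^{2}-4(a+1)^{p}.
\]
Hence $\Q\bigl(\sqrt{d(a)}\bigr)=\Q\bigl(\sqrt{\delta(a)^{2}-a^{p}}\bigr)$ and $\Q\bigl(\sqrt{d(a)+1}\bigr)=\Q\bigl(\sqrt{(2\delta(a)+1)^{2}-4(a+1)^{p}}\bigr)$; and since $d(a)$ is a non-constant polynomial in $a$ (of degree $2p-2$), infinitely many distinct values of $d$ will survive whatever admissibility restrictions we impose.

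I would then fix the parameter range: $a$ odd, $a>1$, $a\not\equiv2\pmod3$, and neither $a$ nor $a+1$ a perfect $p$-th power. The elementary congruences $\delta(a)\equiv1\pmod a$ and $\delta(a)\equiv1\pmod{a+1}$ give $\gcd(\delta(a),a)=1$ and $\gcd\bigl(2\delta(a)+1,a+1\bigr)=\gcd(3,a+1)=1$. Writing $\delta(a)^{2}-a^{p}=m_{1}^{2}D_{1}$ and $(2\delta(a)+1)^{2}-4(a+1)^{p}=m_{2}^{2}D_{2}$ with $D_{1},D_{2}$ square-free (both positive, since $\delta(a)^{2}>a^{p}$ for $p\ge3$; and $D_{2}\equiv1\pmod4$ because $a$ is odd), the elements $\alpha:=\delta(a)+m_{1}\sqrt{D_{1}}$ and $\beta:=\tfrac12\bigl(2\delta(a)+1+m_{2}\sqrt{D_{2}}\bigr)$ lie in the rings of integers of $\Q(\sqrt{D_{1}})$ and $\Q(\sqrt{D_{2}})$ and have norms $a^{p}$ and $(a+1)^{p}$. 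From the coprimality of $\delta(a)$ with $a$ (resp.\ of $2\delta(a)+1$ with $a+1$) one deduces that $(\alpha)$ and $(\bar\alpha)$ (resp.\ $(\beta)$ and $(\bar\beta)$) are coprime ideals; hence every prime dividing $a$ (resp.\ $a+1$) splits, and $(\alpha)=\mathfrak a^{p}$, $(\beta)=\mathfrak c^{p}$ for ideals with $N(\mathfrak a)=a$, $N(\mathfrak c)=a+1$. Therefore $[\mathfrak a]^{p}=[\mathfrak c]^{p}=1$ in the respective class groups.

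It remains to show that $[\mathfrak a]$ and $[\mathfrak c]$ are \emph{nontrivial} for infinitely many admissible $a$; as $p$ is prime, they then have order exactly $p$, whence $p\mid h\bigl(\Q(\sqrt{d(a)})\bigr)$ and $p\mid h\bigl(\Q(\sqrt{d(a)+1})\bigr)$. This is the heart of the matter and the expected main obstacle. Because both fields are \emph{real} quadratic, the easy imaginary-case estimate ``$b<|D|/4\Rightarrow\mathfrak b$ not principal'' is unavailable, and one must control the fundamental unit. The plan is to argue in the style of Yamamoto: if $\mathfrak a=(\gamma)$ were principal then $(\gamma^{p})=(\alpha)$, so $\gamma^{p}=\pm\varepsilon_{1}^{j}\alpha$ for a power of the fundamental unit $\varepsilon_{1}$ of $\Q(\sqrt{D_{1}})$; comparing the two archimedean conjugates of $\alpha$ (whose sizes are explicit, $\asymp\delta(a)$ and $\asymp a^{p}/\delta(a)$) against the constraint this places on $j$ and on the regulator forces a contradiction for all but finitely many $a$, and likewise for $\mathfrak c$. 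The remaining work is to verify that the square parts $m_{1},m_{2}$ do not spoil the coprimality hypotheses, to discard the sparse exceptional set (values of $a$ for which $\delta(a)^{2}-a^{p}$ or $(2\delta(a)+1)^{2}-4(a+1)^{p}$ is a perfect square, or $a$ or $a+1$ is a perfect power, or the regulator is anomalously small), and to check that $a\not\equiv2\pmod3$ genuinely suffices for $\gcd(2\delta(a)+1,a+1)=1$. The algebraic identities above are routine; carrying out the non-principality argument uniformly across the family is where the real effort goes.
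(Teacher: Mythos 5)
Your algebraic setup is correct and rather elegant: with $\delta(a)=(a+1)^p-a^p$ and $z_1=2\delta(a)$ one does get $d(a)=z_1^2-4a^p$ and $d(a)+1=(z_1+1)^2-4(a+1)^p$, the congruences $\delta(a)\equiv 1\pmod a$, $2\delta(a)+1\equiv 3\pmod{a+1}$ are right, and the ideal-theoretic step $(\alpha)=\mathfrak a^p$, $(\beta)=\mathfrak c^p$ with $N\mathfrak a=a$, $N\mathfrak c=a+1$ is sound. But the proof has a genuine gap exactly where you flag it: since $\delta(a)^2>a^p$, both of your fields are \emph{real} quadratic, and the assertion that $[\mathfrak a]$ and $[\mathfrak c]$ are nontrivial for infinitely many $a$ is not proved --- it is the entire content of the theorem. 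The size comparison you propose does not by itself yield a contradiction: if $\mathfrak a=(\gamma)$ with $N(\gamma)=\pm a$, the relation $\gamma^p=\pm\varepsilon^j\alpha$ is solvable for \emph{some} $j$ whenever $\mathfrak a$ is principal, so there is no obstruction coming from the archimedean absolute values alone. Worse, your $\alpha=\delta(a)+m_1\sqrt{D_1}$ satisfies $\alpha\bar\alpha=a^p$ with $D_1\asymp a^{2p-2}$, so $\alpha$ is within a bounded factor of $2\sqrt{D_1}$ --- precisely the regime where one must control the fundamental unit (Yamamoto's real-quadratic argument needs either a second independent ideal class or explicit control of the unit index to rule out $\varepsilon^j$ being absorbed into a $p$-th power). ``For all but finitely many $a$'' is therefore an unsubstantiated hope, not a corollary of the estimates you state.

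The route in the literature (Krishnamoorthy--Pasupulati, and the argument sketched in Section 6 of this survey) sidesteps this entirely by keeping both fields \emph{imaginary}: one sets $D=4m^p$ (or $2m^p$), so that $\Q(\sqrt{-D})=\Q(\sqrt{-m})$ and $\Q(\sqrt{-D+1})=\Q(\sqrt{1-4m^p})$. The first field is just $\Q(\sqrt{-m})$, chosen from any of the known infinite families with $p\mid h$; the second is handled by Nagell--Louboutin, where non-principality of the ideal of norm $m$ is a two-line norm inequality ($m^2<|1-4m^p|/4$, so no element of the ring of integers has norm $m$). If you want to salvage your construction, the honest options are (i) flip the sign so that both radicands are negative --- which your identity does not allow, since $\delta(a)^2-a^p>0$ --- or (ii) actually carry out a Yamamoto-type unit argument for this specific family, which is a substantial piece of work and cannot be waved through.
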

	Given any odd $n$, using \Cref{munees} Krishnamoorthy and Muneeswaran \cite{KM21} produced infinitely many $d$ with $\Q(\sqrt{d}), \ \Q(\sqrt{d+1})$ is divisible by $n$.
	Chattopadhyay and Muthukrishnan \cite{triple} proved that the class numbers of the following triples of quadratic fields are simultaneously divisible by 3. 
	\begin{theorem}
		Let $k\geq1$ be a cube free integer with $k\equiv1\pmod{9}$ and $\mathrm{gcd}(k, 7.571)=1$. Then there exists infinitely many triples of imaginary quadratic fields $\Q(\sqrt{d})$,$ \Q(\sqrt{d+1})$, 
		$ \Q(\sqrt{d+k^2})$ with $d\in\Z$ whose class numbers are simultaneously divisible by $3$.
	\end{theorem}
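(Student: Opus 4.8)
The strategy is the one underlying Iizuka's \Cref{Iizuka} and Kishi--Miyake's theorem: one attaches to $\Q(\sqrt{D})$ an ideal class of order $3$ by writing $D$, up to a rational square, as the discriminant $4uw^{3}-27u^{2}$ of an irreducible cubic $T^{3}-uwT-u^{2}$ --- equivalently, by producing an algebraic integer of norm a perfect cube prime to the conductor. So I would look for a polynomial $d=d_{k}(n)\in\Z[n]$ together with parameter pairs $(u_{j},w_{j})$ and polynomials $h_{j}$ for $j=0,1,2$ (with $u_{2},w_{2},h_{2}$ also depending on $k$) such that
\[
 4u_{j}w_{j}^{3}-27u_{j}^{2}=h_{j}(n)^{2}\,\bigl(d_{k}(n)+\epsilon_{j}\bigr),
 \qquad (\epsilon_{0},\epsilon_{1},\epsilon_{2})=(0,1,k^{2}),
\]
with $(u_{j},w_{j})$ satisfying conditions $(1)$--$(4)$ of Kishi--Miyake's theorem for all $n$ in a prescribed residue class. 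Granting this, the class numbers of all three of $\Q(\sqrt{d_{k}(n)})$, $\Q(\sqrt{d_{k}(n)+1})$ and $\Q(\sqrt{d_{k}(n)+k^{2}})$ are divisible by $3$, and what is left is to pick $n$ so that the three radicands are negative and to count distinct fields.

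For $j=0,1$ I would import Iizuka's pair construction: \Cref{iizuka1}, \Cref{iizuka2} and the proof of \Cref{Iizuka} already furnish a cubic family realizing the first two divisibilities at once, which I would rewrite in the form above. The new ingredient is the third relation. Writing $u_{2}=u_{1}+\eta$, $w_{2}=w_{1}+\delta$ and expanding, matching coefficients in $n$ reduces the third relation to a low-degree Diophantine identity in which $k$ appears, whose solvability is a factorization/Pell-type condition. I expect $k\equiv 1\pmod{9}$ to be precisely what makes Kishi--Miyake's condition $(4)$ --- the congruences on $u_{2}w_{2}$ and $u_{2}$ modulo $9$ and $27$ --- hold automatically along the family, while $k$ cube-free together with $\gcd(k,7\cdot 571)=1$ excludes the finitely many $k$ for which $T^{3}-u_{2}w_{2}T-u_{2}^{2}$ becomes reducible, or $4u_{2}w_{2}^{3}-27u_{2}^{2}$ becomes a perfect square (condition $(3)$), or $\gcd(u_{2},w_{2})\neq 1$ (condition $(2)$); the numbers $7$ and $571$ presumably being the primes dividing the relevant resultant or discriminant.

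Once the identity is found, the rest is routine and modelled on the cited work. Since $d_{k}$ is a nonconstant polynomial of odd degree (as in Iizuka's construction), it is not a constant times the square of a polynomial, so the squarefree part of $d_{k}(n)$ is unbounded as $n$ runs through the chosen residue class; hence $\Q(\sqrt{d_{k}(n)})$ alone takes infinitely many values. Choosing the branch of $n$ along which the cubic tends to $-\infty$, all three radicands $d_{k}(n)$, $d_{k}(n)+1$, $d_{k}(n)+k^{2}$ are negative for all large such $n$, so one obtains infinitely many triples of imaginary quadratic fields, pairwise distinct because the first coordinates already are. Thus $k\equiv 1\pmod 9$, $k$ cube-free and $\gcd(k,7\cdot 571)=1$ serve only as an admissibility condition ensuring the family never degenerates.

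The hard part is the construction in the second paragraph: upgrading Iizuka's two-field parametrization to a three-field one carrying the extra shift by $k^{2}$, i.e.\ solving $4u_{2}w_{2}^{3}-27u_{2}^{2}=h_{2}(n)^{2}\bigl(d_{k}(n)+k^{2}\bigr)$ over $\Z[n]$ while keeping conditions $(1)$--$(4)$ intact. This is an elementary but intricate search, and it is what pins down the exact hypotheses on $k$; applying Kishi--Miyake and Iizuka, checking signs, and counting fields are all standard afterwards.
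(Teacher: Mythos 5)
This theorem is not proved in the survey at all; it is quoted verbatim from Chattopadhyay--Muthukrishnan \cite{triple}, so there is no in-paper argument to compare against, and your proposal must stand on its own. It does not: what you have written is a research plan, not a proof. You correctly identify the expected machinery --- the Kishi--Miyake parametrization \cite{KM00} of quadratic fields with $3$-divisible class number, deployed in the style of Iizuka's pair construction \cite{Iizuka} --- and this is indeed the circle of ideas the cited proof lives in. But the entire mathematical content of the theorem is the explicit construction you defer in your second paragraph: producing the polynomial $d_k(n)$ and the three admissible pairs $(u_j,w_j)$ (equivalently, three unramified cubic extensions, one for each of the radicands $d$, $d+1$, $d+k^2$), and verifying conditions $(1)$--$(4)$ of Kishi--Miyake along the whole family. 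You acknowledge this is ``the hard part'' and an ``intricate search'' without carrying it out, so nothing is actually established. In particular, your assignment of roles to the hypotheses --- that $k\equiv 1\pmod 9$ forces condition $(4)$, that cube-freeness and $\gcd(k,7\cdot 571)=1$ handle irreducibility, non-squareness and coprimality, with $7$ and $571$ coming from ``the relevant resultant or discriminant'' --- is reverse-engineered guesswork; without the explicit family one cannot check any of it, and these conditions are exactly where such constructions typically fail.

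Two further cautions on the parts you do sketch. First, ``$n$ divides the class number because the radicand is of Kishi--Miyake form up to a square'' requires the pairs $(u_j,w_j)$ themselves to vary with $n$ and to satisfy the irreducibility and congruence conditions for \emph{every} $n$ in the chosen residue class; this is a nontrivial uniform verification, not a one-time check. Second, your argument that the triples are pairwise distinct infinitely often (unboundedness of the squarefree part of a non-square polynomial) is standard but still presupposes knowing $\deg d_k$ and its leading coefficient, i.e.\ the missing construction. As it stands the proposal identifies the right toolbox but proves nothing; the gap is the central identity, not the routine surroundings.
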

	Hoque \cite{hoque} proved a weaker version of Iizuka's conjecture.
	\begin{theorem}
		For any odd positive integer $n$ and an odd prime $p$ there are infinitely many quadruples of imaginary quadratic fields $\Q(\sqrt{d}), \ \Q(\sqrt{d+1}), \ \Q(\sqrt{d+4}), \ \Q(\sqrt{d+4p^2})$ with $d\in\Z$ whose class numbers are divisible by $n$.
	\end{theorem}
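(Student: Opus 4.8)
The plan is to realize $d$ as a one-parameter polynomial family $d=d(t)$, $t\in\Z$, so that each of the four quadratic fields $\Q(\sqrt d),\Q(\sqrt{d+1}),\Q(\sqrt{d+4}),\Q(\sqrt{d+4p^{2}})$ belongs to the family $K_{x,y,n,\mu}$ studied in \Cref{sect4}, for which divisibility of the class number by $n$ is already known. We may assume $n\ge 3$ (the case $n=1$ is vacuous). The observation that makes this possible is that the four offsets are perfect squares, $0=0^{2},\ 1=1^{2},\ 4=2^{2},\ 4p^{2}=(2p)^{2}$; so, setting
\[
d=d(t):=-(2t^{n}-1)^{n},
\]
we get $d(t)+k^{2}=k^{2}-(2t^{n}-1)^{n}$ for each $k\in\{1,2,2p\}$, i.e.\ $\Q(\sqrt{d(t)+k^{2}})=K_{k,\,2t^{n}-1,\,n,\,1}$; and, because $n$ is odd so that $(2t^{n}-1)^{n}$ is $2t^{n}-1$ times a perfect square,
\[
\Q\bigl(\sqrt{d(t)}\bigr)=\Q\bigl(\sqrt{-(2t^{n}-1)^{n}}\bigr)=\Q\bigl(\sqrt{1-2t^{n}}\bigr)=K_{1,\,t,\,n,\,2}.
\]

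I would first dispatch the shift by $0$: by \Cref{munees}, the class number of $K_{1,t,n,2}$ is divisible by $n$ as soon as $t$ is odd and $t>\max_{i}2^{(p_{i}-2)/p_{i}^{k_{i}-1}}$ (where $n=\prod_{i}p_{i}^{k_{i}}$), so we restrict $t$ to odd integers above this bound. For $t$ large in terms of $p$ we also have $(2p)^{2}<(2t^{n}-1)^{n}$, so the three fields $K_{k,2t^{n}-1,n,1}$ with $k\in\{1,2,2p\}$ are imaginary quadratic. For $k=1$ the divisibility $n\mid h(K_{1,2t^{n}-1,n,1})$ is Louboutin's unconditional result for $K_{1,y,n,1}$ with $y=2t^{n}-1>2$. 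For $k=2$ and $k=2p$ I would appeal to the theorem of Chakraborty, Hoque, Kishi and Pandey on $K_{x,y,n,1}$; its mild hypotheses demand, besides $x^{2}<y^{n}$, the coprimality $\gcd(x,y)=1$ — automatic for $x=2$ since $y=2t^{n}-1$ is odd, and for $x=2p$ equivalent to $p\nmid 2t^{n}-1$, which excludes $t$ from the at most $\gcd(n,p-1)$ residues mod $p$ with $t^{n}\equiv 2^{-1}\pmod p$ — together with a congruence on $y$ forcing the ideal class arising from the factorisation $\bigl(k+\sqrt{k^{2}-y^{n}}\bigr)\bigl(k-\sqrt{k^{2}-y^{n}}\bigr)=y^{n}=\mathfrak a^{n}$ to have order exactly $n$, and the exclusion of finitely many sporadic triples. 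All of these are congruences on $t$ or exclude finitely many $t$, and, compatibly with $t$ odd by the Chinese Remainder Theorem, they hold along a suitable arithmetic progression of $t$.

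Consequently, for every sufficiently large $t$ in that progression, all four of $\Q(\sqrt{d(t)})$, $\Q(\sqrt{d(t)+1})$, $\Q(\sqrt{d(t)+4})$, $\Q(\sqrt{d(t)+4p^{2}})$ are imaginary quadratic with class number divisible by $n$; and since $t\mapsto d(t)$ is injective, the infinitely many admissible $t$ yield infinitely many $d$. Moreover these four fields are pairwise distinct once $t$ is large: the three pairs involving $\Q(\sqrt{d(t)})$ because $\gcd\bigl(2t^{n}-1,(2t^{n}-1)^{n}-c\bigr)=\gcd(2t^{n}-1,c)=1$ for $c\in\{1,4,4p^{2}\}$ (using $p\nmid 2t^{n}-1$), and the other three because $\bigl((2t^{n}-1)^{n}-c\bigr)\bigl((2t^{n}-1)^{n}-c'\bigr)$ fails to be a perfect square for $(2t^{n}-1)^{n}$ large.

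The step I expect to be the main obstacle is the verification of the mild hypotheses of the Chakraborty–Hoque–Kishi–Pandey theorem — above all the ``order exactly $n$'' congruence — simultaneously for $x=2$ and $x=2p$ along one progression of $t$ that is also compatible with the parity required in \Cref{munees}. If the precise form of those hypotheses is inconvenient, the fallback is the direct ideal-theoretic argument, uniform in $t$: the conjugate factors $k\pm\sqrt{k^{2}-(2t^{n}-1)^{n}}$ are coprime (since $\gcd(2k,2t^{n}-1)=1$) with product $(2t^{n}-1)^{n}$, whence $\bigl(k+\sqrt{k^{2}-(2t^{n}-1)^{n}}\bigr)=\mathfrak a_{k}^{\,n}$ with $N(\mathfrak a_{k})=2t^{n}-1$; that $[\mathfrak a_{k}]$ has order exactly $n$ then follows from $(2p)^{2}<(2t^{n}-1)^{n}$, which rules out an algebraic integer of the small norm $(2t^{n}-1)^{e}$ for a proper divisor $e\mid n$ once a square-factor condition on $(2t^{n}-1)^{n}-k^{2}$ keeps its squarefree part large, together with the oddness of $n$, which rules out a collapse to an even order. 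What remains is only the uniform bookkeeping of these conditions in the parameter $t$.
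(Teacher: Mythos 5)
The paper itself gives no proof of this statement --- it is quoted verbatim from Hoque \cite{hoque} --- so there is nothing internal to compare against; I am judging your argument on its own terms. Your architecture is the right one and is of the same kind as Hoque's: exploit that the offsets $0,1,4,4p^2$ are the squares of $0,1,2,2p$, choose $d(t)=-(2t^n-1)^n$ so that $\Q(\sqrt{d})=K_{1,t,n,2}$ (handled by \Cref{munees}) and $\Q(\sqrt{d+k^2})=K_{k,\,2t^n-1,\,n,\,1}$ for $k\in\{1,2,2p\}$, and then invoke divisibility results for the family $K_{x,y,n,\mu}$. The reduction, the imaginarity and distinctness checks, and the passage from infinitely many $t$ to infinitely many $d$ are all fine.

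The genuine gap is the step you yourself flag as ``the main obstacle'': showing that the ideal class $[\mathfrak a_k]$ with $\mathfrak a_k^n=(k+\sqrt{k^2-y^n})$, $y=2t^n-1$, has order exactly $n$ for all three values $k=1,2,2p$ and for infinitely many $t$. None of the results you cite actually covers these cases as stated: Louboutin's theorem \cite{SL} is about $\Q(\sqrt{1-4m^n})=K_{1,m,n,4}$, not $K_{1,y,n,1}$ (Section 4 of this survey misstates it; the proof in Section 6 uses the correct form), so your $k=1$ field is not covered by it; the theorem of Hoque quoted here as Theorem 5.7 requires $a$ odd and $\mu=4$; and the hypotheses of Chakraborty--Hoque--Kishi--Pandey \cite{MR3734353} are emphatically \emph{not} ``congruences on $t$ or the exclusion of finitely many $t$.'' The first alternative hypothesis, $x\not\equiv\pm b\pmod{\ell}$ for proper divisors $b$ of $x$ and primes $\ell\mid n$, fails outright for $x=2$ whenever $3\mid n$ (since $2\equiv-1\pmod 3$) and for $x=2p$ whenever $3\mid n$ or $p\mid n$; one is then forced into the second alternative, a congruence modulo the squarefree part $|d_k|$ of $k^2-y^n$. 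That quantity is not a function of $t$ modulo any fixed integer, and the condition fails exactly when $y^n-k^2$ acquires a square factor of size comparable to $y^{n-n/\ell}$ --- which is also precisely what breaks your fallback norm argument ($v^2|d_k|=4y^e-u^2$ only yields a contradiction if $|d_k|$ is not too small). Controlling the square part of $(2t^n-1)^n-k^2$ for infinitely many $t$, simultaneously for $k=1,2,2p$, \emph{is} the theorem; deferring it to ``uniform bookkeeping'' leaves the proof incomplete. To close it you would need either the finer coefficient-comparison argument (comparing both the rational and irrational parts of $(u+v\sqrt{d_k})^{n/e}=2^{n/e}(k+w\sqrt{d_k})$ and showing the resulting Diophantine constraints force $y^e$ into an explicit finite set, as Chakraborty--Hoque do for $K_{p,q,n,2}$), or an explicit choice of $t$-family for which the squarefree parts are provably large.
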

	Chakraborty and Hoque \cite{arxivhoque}  produced an infinite family of certain tuples of imaginary quadratic fields whose class numbers are divisible by $n$. This result is to appear in the Ramanujan Journal. 
	\begin{theorem}
		There exists an infinite family of certain tuples of imaginary quadratic fields of the form $$\Q\left(\sqrt{d}\right), \ \Q\left(\sqrt{d+1}\right), \ \Q\left(\sqrt{4d+1}\right), \ \Q\left(\sqrt{2d+4}\right), \ \Q\left(\sqrt{2d+16}\right),..., \Q\left(\sqrt{2d+4^t} \right)$$ with $d\in\Z, \  1\leq4^t\leq 2|d|$  whose class numbers are all divisible by $n$.
	\end{theorem}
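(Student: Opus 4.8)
The plan is to prove this along the lines of Iizuka's parametrisation (\Cref{iizuka1}, \Cref{iizuka2}) combined with the Lebesgue--Nagell technique behind \Cref{sunil} and \Cref{munees}. First I would isolate the relevant divisibility criterion, which is essentially already present in the works of Louboutin, Murty and Krishnamoorthy--Muneeswaran cited above: if $D\geq 1$ is a squarefree integer and there exist integers $u,v,c$ with
\[
u^{2}+Dv^{2}=\mu c^{n},\qquad \mu\in\{1,2,4\},
\]
such that $c\geq 2$, $v\neq 0$, $\gcd(u,v)=1$, the usual $2$-adic coprimality side conditions hold, and the archimedean inequality $c^{\,n/\ell}<D$ holds for the least prime $\ell\mid n$ (up to an absolute constant), then the ideal $\mathfrak a$ with $(u+v\sqrt{-D})=\mathfrak a^{\,n}$ --- up to a factor supported above $2$ --- has order exactly $n$ in the class group of $\Q(\sqrt{-D})$, whence $n\mid h_{\Q(\sqrt{-D})}$. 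Since every field in the tuple is imaginary quadratic, this single criterion is all that is needed.

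Next I would choose $d=d(a)$, depending on an auxiliary parameter $a$, so that each of $d$, $d+1$, $4d+1$ and, for $1\leq j\leq t$, $2d+4^{\,j}$, becomes $\pm(\text{a square})\cdot\bigl(u_{i}(a)^{2}-\mu_{i}a^{n}\bigr)$ for suitable polynomials $u_{i}(a)$ and constants $\mu_{i}\in\{1,2,4\}$; equivalently, each associated field is one of the families $K_{u_{i}(a),\,a,\,n,\,\mu_{i}}$. The required factorisations are polynomial identities in $a$, hence hold identically, exactly as in Iizuka's proof of \Cref{Iizuka}. The tail $2d+4^{\,j}=2\bigl(d+2\cdot 4^{\,j-1}\bigr)$ is a polynomial in $a$ differing from the $j=1$ case only in its constant term, so one identity, with the constant shifted, handles all $t$ of the fields $\Q(\sqrt{2d+4^{\,j}})$ at once.

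The gcd, $2$-adic and archimedean hypotheses of the criterion then amount to finitely many nonvacuous congruence conditions on $a$, so by the Chinese Remainder Theorem there is an arithmetic progression --- indeed a set of positive density --- of admissible $a$ for which the criterion applies to all $t+3$ fields simultaneously; these conditions also pin down the squarefree kernels $D_{i}(a)$, keeping each of them away from the exceptional values $1$ and $3$ and from being a perfect power. Since the discriminant of $\Q(\sqrt{d(a)})$ tends to infinity with $a$ along this progression, distinct admissible $a$ produce infinitely many distinct tuples; and the constraint $1\leq 4^{t}\leq 2\abs{d}$ is automatic once $a$ is large, because $\abs{d(a)}\to\infty$ while $t$ is fixed.

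The step I expect to be the crux is producing a \emph{single} residue class of $a$ that forces the relevant ideal class to have order precisely $n$ --- not a proper divisor of $n$ --- in every one of the $t+3$ fields at the same time. The archimedean conditions $c^{\,n/\ell}<D_{i}$ pull against one another, since $d$, $d+1$ and $4d+1$ are of comparable size and one cannot enlarge all the kernels $D_{i}(a)$ by brute force; and the sub-cases $D_{i}\equiv 1,2,3\pmod 4$, as well as the small kernels $D_{i}\in\{1,3\}$, have to be separated. The resolution is to choose the auxiliary polynomials $u_{i}(a)$ and the leading term of $d(a)$ so that the congruences already imposed for coprimality simultaneously force every $D_{i}(a)$ to be large and different from $1$ and $3$; granting this, the order-exactly-$n$ argument and the counting of distinct fields are routine.
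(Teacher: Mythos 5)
This theorem is quoted in the survey from Chakraborty--Hoque \cite{arxivhoque} without proof, so there is no in-paper argument to measure you against; judged on its own terms, your proposal has a genuine gap exactly where the content of the theorem lies. Your first step --- the criterion that a representation $u^{2}+Dv^{2}=\mu c^{n}$ with the usual coprimality and archimedean side conditions forces $n\mid h_{\Q(\sqrt{-D})}$ --- is the standard lemma and is fine in spirit. But the second step is never carried out: you assert that one can choose $d=d(a)$ so that $d$, $d+1$, $4d+1$ and all the $2d+4^{j}$ simultaneously land in known families $K_{u_{i}(a),a,n,\mu_{i}}$, ``exactly as in Iizuka's proof,'' without exhibiting any candidate $d(a)$ or $u_{i}(a)$. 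Iizuka's construction handles two fields; producing one $d$ compatible with these particular $t+3$ shifts \emph{is} the theorem, and it is not automatic. In particular, your claim that the tail is handled by ``one identity, with the constant shifted'' is incorrect: if $u(a)^{2}-\mu a^{n}$ equals $2d+4$, then adding $2\cdot4^{j-1}-2$ to the constant term does not keep the expression in the form $u'^{2}-\mu' a^{n}$; each $j$ yields a genuinely different quadratic field and needs its own divisibility input.

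For what it is worth, the shifts in the statement are visibly designed for the choice $d=-y^{n}$: then $d+1=1-y^{n}$ (the family $K_{1,y,n,1}$ of Murty \cite{RM99}), $4d+1=1-4y^{n}$ (Louboutin's family $K_{1,y,n,4}$ \cite{SL}), and $2d+4^{j}=(2^{j})^{2}-2y^{n}$, a member of $K_{x,y,n,2}$ for each $j\le t$ --- the hypothesis $4^{t}\le 2\abs{d}=2y^{n}$ is exactly what keeps these last fields imaginary. What is then needed, and what the Lehmer-sequence machinery of \cite{arxivhoque} supplies, is divisibility for these families uniformly in the parameter $2^{j}$ and without squarefreeness assumptions. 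The remaining subtlety, which your sketch does not touch at all, is the first field: for odd $n$ and squarefree $y$ one has $\Q(\sqrt{d})=\Q(\sqrt{-y})$, whose class number has no reason to be divisible by $n$, so $y$ must itself be taken of a special shape (for instance $y=4w^{n}-1$, making $\Q(\sqrt{-y})$ again a Louboutin field). Your proposal would need to supply both the explicit parametrization and this extra idea before the Chinese-Remainder and counting steps can even begin; those final steps are indeed routine once the construction is in hand.
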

	
	Many authors proved results similar to \Cref{Iizuka}. For example,  Xie and  Fai \cite{xie} proved the following result using parametrization of quadratic fields whose class numbers are divisible by $3$.
	\begin{theorem}
		For arbitrary positive integer $n,$ there exists infinitely many pairs of quadratic fields $\Q\left(\sqrt{d}\right), \ \Q\left(\sqrt{d+n}\right)$ with $d\in\Z$ their class number simultaneously divisible by $3$.
	\end{theorem}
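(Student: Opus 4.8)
The plan is to combine the Kishi--Miyake parametrization with an explicit rational curve of parameters, generalizing the argument behind \Cref{Iizuka} from a shift of $1$ to a shift of $n$. It is enough to produce infinitely many \emph{imaginary} quadratic fields, so I will aim for $d\in\Z$ with $d$ and $d+n$ both negative (once the parameter is large) and both of Kishi--Miyake type. The convenient reformulation of that parametrization is this: setting $u=d$ in the Kishi--Miyake theorem stated above gives $4uw^3-27u^2=d\,(4w^3-27d)$, so if $4w^3-27d$ is a perfect square $s^2$ --- equivalently if $27d=4w^3-s^2$ --- and the mild conditions $(1)$--$(4)$ hold (in particular $3\nmid w$), then $3\mid h_{\Q(\sqrt d)}$; taking instead $u=4d$ yields $4uw^3-27u^2=16d\,(w^3-27d)$, whence the variant: if $27d=w^3-s^2$ and the analogous conditions hold, then again $3\mid h_{\Q(\sqrt d)}$. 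No squarefreeness of $d$ is needed, since the criterion delivers $3\mid h_{\Q(\sqrt m)}$ for the field $\Q(\sqrt m)$ directly.

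First I would reduce the statement to a system of Diophantine equations: I want integers $w_1,s_1,w_2,s_2$ (in the first shape) with
\[
27 d=4w_1^3-s_1^2,\qquad 27(d+n)=4w_2^3-s_2^2,
\]
equivalently $27n=4(w_2^3-w_1^3)-(s_2^2-s_1^2)$ together with $27\mid 4w_i^3-s_i^2$, after which $d:=\tfrac{1}{27}(4w_1^3-s_1^2)$ does the job. The heart of the proof is to exhibit, for every $n$, a rational curve of solutions carrying infinitely many integral points with the side conditions. When $n\not\equiv 2\pmod 4$ one can take $w_1=w_2=w$, so that the relation becomes $s_1^2-s_2^2=27n$; choosing an appropriate factorization of $27n$ into two factors of equal parity, in residue classes compatible with the constraint $4w^3\equiv s_1^2\pmod{27}$, fixes $s_1,s_2$ and leaves $w$ --- hence the cubic polynomial $d=d(w)$ --- as the free parameter, ranging over an arithmetic progression. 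The class $n\equiv 2\pmod 4$ is genuinely obstructed in this shape (a difference of two squares is never $\equiv 2\pmod 4$, while $27n$ then is), and I would handle it by switching to the second shape $27d=w^3-s^2$ with $w_1,w_2$ both odd and $w_2-w_1\equiv 2\pmod 4$: the relation $27n=(w_2^3-w_1^3)-(s_2^2-s_1^2)$ is then solvable modulo $4$, and solving it for one of the variables while keeping the others in fixed residue classes mod $27$ again produces a one-parameter polynomial family $d=d(t)$ along a progression.

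With such a family at hand, the verification is routine. For all but finitely many $t$ in the progression the coprimality conditions, $3\nmid w_i(t)$, non-squareness of $4u_iw_i^3-27u_i^2$, and irreducibility of $T^3-u_iw_iT-u_i^2$ all hold, since each failure carves out a proper Zariski-closed --- hence finite --- subset of the parameter line; so $3\mid h_{\Q(\sqrt{d(t)})}$ and $3\mid h_{\Q(\sqrt{d(t)+n})}$ for all large admissible $t$. Finally, $d(t)$ is a non-constant polynomial that is not a constant multiple of the square of a polynomial (it has an irreducible factor of odd multiplicity), so its squarefree part is unbounded and, by Siegel's theorem applied to the curves $d(x)d(y)=z^2$, the fields $\Q(\sqrt{d(t)})$ are pairwise distinct apart from finitely many coincidences; this yields infinitely many distinct pairs with both class numbers divisible by $3$.

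The step I expect to be the main obstacle is the middle one: constructing the explicit rational parametrization that threads the shift $n$ through two copies of the Kishi--Miyake criterion while clearing the parity and $3$-adic congruence constraints uniformly in $n$ --- most sharply in the class $n\equiv 2\pmod 4$, which is obstructed for the first shape of the criterion and forces the passage to the second. Everything else runs parallel to Iizuka's treatment of the case $n=1$.
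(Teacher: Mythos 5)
The paper itself contains no proof of this statement; it is quoted from Xie and Chao \cite{xie}, with only the remark that their argument uses the parametrization of Kishi and Miyake \cite{KM00}. Your overall strategy is therefore the right one in spirit, but the central construction --- the step you yourself flag as the main obstacle --- is not merely unfinished: in the form you propose it is obstructed. Under the Kishi--Miyake condition $3\nmid w$ (case 4(a), to which you restrict), integrality of $d=(4w^3-s^2)/27$ forces $4w^3\equiv s^2\pmod{27}$; since the cubes of units modulo $9$ are $\pm1$ while the squares modulo $9$ are $\{0,1,4,7\}$, this already forces $s^2\equiv 4\pmod 9$, i.e.\ $s\equiv\pm2\pmod 9$. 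If you take $w_1=w_2$, so that $s_1^2-s_2^2=27n$ with $s_1,s_2\equiv\pm2\pmod 9$, then exactly one of $s_1\pm s_2$ is divisible by $9$ and the other is $\equiv\pm4\pmod 9$ (hence prime to $3$); writing $27n$ as their product shows that $n$ must possess a divisor $\equiv\pm4\pmod 9$. This fails already for $n=1$ (and for $n=3,9,25,\dots$), so the case you present as the easy one, $n\not\equiv 2\pmod 4$, does not go through: no ``appropriate factorization of $27n$'' compatible with the mod-$27$ constraint exists for such $n$. Any correct version must allow $w_1\neq w_2$ (or pass to the branches of condition (4) with $3\mid w$), which reinstates precisely the two-variable Diophantine problem you were trying to avoid; that is where the real content of the cited proof lies, and it is absent here.

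Two smaller points. The assertion that failure of irreducibility of $T^3-uwT-u^2$ along your one-parameter family ``carves out a Zariski-closed, hence finite, subset'' is not correct: reducibility of integer specializations is not a Zariski-closed condition. It is controlled either by Hilbert irreducibility (the bad set is thin, hence of density zero in any arithmetic progression) or by a direct bound on putative rational roots, which must divide $u^2$; either fix is routine but should be stated as such. Likewise, distinctness of the resulting fields should be argued one squarefree class at a time: for each squarefree $m$, the curve $d(t)=my^2$ has finitely many integral points by Siegel once $d$ is a separable cubic --- this, rather than the surface $d(x)d(y)=z^2$, is the correct form of the argument you gesture at.
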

	
	\section{Quantitative aspects of Iizuka conjecture}

	For given $n$,   $\cM_n^{-}(X)$ to be the number of
	$d\leq X$ such that the class numbers of both the imaginary quadratic field  ${\Q(\sqrt{-d})}, \ {\Q(\sqrt{-d +1})}$ are divisible by $n$.
	\begin{theorem}
		For large $X$,
		\[\cM_n^{-}(X)\gg (X/4)^{\frac{1}{2n}+\frac{2}{n^2}- \frac{\epsilon}{n}}.
		\]
	\end{theorem}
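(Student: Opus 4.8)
The plan is to produce an explicit parametric family of $d$ for which both $\Q(\sqrt{-d})$ and $\Q(\sqrt{-d+1})$ have class number divisible by $n$, and then to run a Soundararajan-type count to see that the family contains many pairwise distinct $d$. For the construction I would use the general divisibility criterion behind \Cref{munees} and the related results quoted above: under mild hypotheses ($\gcd(a,m)=1$, $a^2<4m^n$, the relevant radicand squarefree up to a square part coprime to $m$, and $m$ past an explicit bound), the class number of $\Q(\sqrt{a^2-4m^n})$ is divisible by $n$. The point is to impose $-d=a^2-4m^n$ and simultaneously force $-d+1=(a')^2-4(m')^n$ for a second admissible pair $(a',m')$. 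Subtracting, this is the single Diophantine coupling
\[
(a')^2-a^2 \;=\; 1-4\bigl((m')^n-m^n\bigr),
\]
so, writing $m'=m+k$, one needs the odd integer $4\bigl((m+k)^n-m^n\bigr)-1$ to factor as $(a-a')(a+a')$ with the two factors in the admissible size ranges; this, together with $4m^n\le X$, is exactly what confines $m$ to the scale $(X/4)^{1/n}$ and is the source of the harmless $X/4$. Every parameter choice solving the coupling and meeting the sign, coprimality and squarefreeness conditions contributes a $d\le X$ counted by $\cM_n^-(X)$.

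The second half is the counting. Two parameter choices give the same pair of fields only when the corresponding values of $-d$ differ by a rational square, so the number of coincidences is controlled by a squarefree sieve applied to $a^2-4m^n$ (equivalently by bounding integral points on the auxiliary curves), costing only an $X^{\epsilon/n}$ factor. To lower-bound the number of admissible parameters and to push the exponent past the naive $\tfrac1{2n}$, I would not fix $a$ rigidly but keep the squarefree-part map flexible and count distinct squarefree values of $a^2-4m^n$ (subject to the shift by one being admissible as well) by the harmonic-analytic machinery of \cite{KS00} run at the scale $M=(X/4)^{1/n}$; this is precisely what improves Murty's $M^{1/2+1/n}$ to Soundararajan's $M^{1/2+2/n-\epsilon}$, and it delivers
\[
\cM_n^-(X)\ \gg\ M^{\,1/2+2/n-\epsilon}\ =\ (X/4)^{\frac1{2n}+\frac2{n^2}-\frac{\epsilon}{n}}.
\]

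The main obstacle is the coupling in the first step: requiring $-d+1$ to fall back inside the divisibility family is a genuine constraint --- the analogue, for general $n$, of Iizuka's bespoke $p=3$ identities \Cref{iizuka1} and \Cref{iizuka2} and of the Diophantine input behind \Cref{sunil} --- and one must show that $4\bigl((m+k)^n-m^n\bigr)-1$ admits a factorization of the required shape for enough values of the parameters to sustain the Soundararajan count. Guaranteeing this either needs a distribution-of-divisors estimate (integers with a divisor in a prescribed dyadic window) or, more robustly, an extra square-factor degree of freedom that decouples the radicand from the field it defines; I expect the rest to be bookkeeping, namely propagating the non-degeneracy hypotheses of \Cref{munees} through the count and feeding in the squarefree sieve that produces the $\epsilon$.
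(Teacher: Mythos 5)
Your construction requires both $-d$ and $-d+1$ to land in the divisibility family $a^2-4m^n$, which forces the Diophantine coupling $(a')^2-a^2=1-4\bigl((m')^n-m^n\bigr)$; you correctly flag this as the main obstacle and then leave it unresolved, but it is not bookkeeping --- it is the entire content of the theorem, and there is no reason to expect $4\bigl((m+k)^n-m^n\bigr)-1$ to admit a factorization of the required shape for enough parameters to sustain an exponent like $M^{1/2+2/n-\epsilon}$. The paper sidesteps the coupling entirely with one observation you are missing: since $n$ is odd, $\Q(\sqrt{-4d^n})=\Q(\sqrt{-d})$. So one sets $D=4d^n$ for any $d$ with $n\mid h_{\Q(\sqrt{-d})}$; then $\Q(\sqrt{-D})$ \emph{is} the field $\Q(\sqrt{-d})$, whose class number is divisible by $n$ by hypothesis (no second application of a divisibility criterion is needed), while $\Q(\sqrt{-D+1})=\Q(\sqrt{1-4d^n})$ lies in Louboutin's family $K_{1,d,n,1}$ and hence has class number divisible by $n$ automatically. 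Only one of the two fields needs to be ``caught'' by a divisibility theorem; the other is caught by assumption.

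With that observation the count is immediate and needs no rerun of Soundararajan's harmonic-analytic machinery, no squarefree sieve, and no control of coincidences: distinct $d$ give distinct integers $D=4d^n\le X$, so $\cM_n^{-}(X)\ge \cN_n^{-}\bigl((X/4)^{1/n}\bigr)$, and Soundararajan's theorem applied as a black box at the scale $M=(X/4)^{1/n}$ yields the stated exponent. Your proposal instead treats Soundararajan's result as something to be re-proved for the values $a^2-4m^n$ subject to the coupling constraint, which is a substantially harder (and here unaddressed) analytic problem. As written, the argument does not go through.
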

	
	\begin{proof}
		Let $n$ be an odd integer that divides the class number of the imaginary quadratic field ${\Q(\sqrt{-d})}$. By \cite[Theorem 1]{SL}, $n$ divides the the class number of the imaginary quadratic field $\Q(\sqrt{1-4d^n})$. If an odd integer  $n$  divides  $h_{\Q(\sqrt{-d})}$ then
		$n$ divides  the class number of consecutive  imaginary quadratic fields   ${\Q(\sqrt{-D})} $ and $ {\Q(\sqrt{-D +1})}$, where $D=4d^n$. Therefore 
		\begin{align*}
			\cM_n^{-}(X)= &\left\{  D\leq X : 3\ \text{ divide both } \  h_{\Q(\sqrt{-D})}, h_{\Q(\sqrt{-D +1})} \right\}. \\
			&\geq \cN_n\left((X/4)^{\frac{1}{n}}\right)\\
			&\gg (X/4)^{\frac{1}{n}* \left(\frac{1}{2}+\frac{2}{n}-\varepsilon \right) }\\
			&=(X/4)^{\frac{1}{2n}+\frac{2}{n^2}-\frac{\varepsilon}{n}}.
		\end{align*} 
	\end{proof}
	
	\section{Concluding Remarks}
	Cornell \cite{MR564924} proved that any finite abelian group is a subgroup of the class group of some cyclotomic fields. But he has not said anything about the number of cyclotomic fields. Mishra et al. \cite{MR4270784} proved the following result which is similar to Cornell's result. But in real cyclotomic fields, which is also told about the number of real cyclotomic fields. 
	\begin{theorem}
		Given any finite abelian group $G$ there exists infinitely many real cyclotomic fields $\Q(\zeta_n)^+$ whose class group has a subgroup isomorphic to $G$. 
	\end{theorem}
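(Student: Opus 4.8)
The strategy I would pursue has two layers: first reduce, via class field theory, to exhibiting everywhere-unramified abelian extensions, and then produce them by genus theory. Write $G\cong\Z/m_{1}\Z\times\cdots\times\Z/m_{r}\Z$. If $L/\Q(\zeta_{n})^{+}$ is a finite abelian extension unramified at all finite and infinite places with $\Gal(L/\Q(\zeta_{n})^{+})\cong G$, then $L$ lies inside the Hilbert class field of $\Q(\zeta_{n})^{+}$ and the Artin map gives a surjection $Cl_{\Q(\zeta_{n})^{+}}\twoheadrightarrow G$; since a finite abelian group is isomorphic to its own Pontryagin dual, a quotient isomorphic to $G$ yields a subgroup isomorphic to $G$, so it suffices to build such an $L$. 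Moreover it is enough to do this for a single (large) modulus $n=n_{0}$: for any prime $\ell\mid n_{0}$ the prime $\ell$ is totally ramified at every step of the tower $\Q(\zeta_{n_{0}})^{+}\subset\Q(\zeta_{n_{0}\ell})^{+}\subset\Q(\zeta_{n_{0}\ell^{2}})^{+}\subset\cdots$, and a totally ramified prime forces the natural maps $Cl_{\Q(\zeta_{n_{0}})^{+}}\hookrightarrow Cl_{\Q(\zeta_{n_{0}\ell^{k}})^{+}}$ to be injective; hence $G$ embeds into the class group of each of these infinitely many fields.

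For the base case, the naive idea — take $L=\Q(\zeta_{n_{0}})^{+}F$ for an auxiliary real abelian $G$-extension $F/\Q$ and force $L/\Q(\zeta_{n_{0}})^{+}$ to be unramified by Abhyankar's lemma — must fail, because $\Q(\zeta_{n_{0}})$ contains every abelian extension of $\Q$ whose conductor divides $n_{0}$: if $\Q(\zeta_{n_{0}})^{+}$ is ramified wherever the (tamely ramified) $F$ is, then $F\subset\Q(\zeta_{n_{0}})^{+}$ and the compositum collapses. So the unramified $G$-extension we want is necessarily non-abelian over $\Q$, and the right tool is relative genus theory. Fix a prime $p\ge5$ (say with $h^{+}(\Q(\zeta_{p}))=1$ to simplify), so $g:=(p-1)/2\ge2$ primes of $\Q(\zeta_{p})^{+}$ lie over any prime $q\equiv1\pmod p$. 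Choose many auxiliary primes $q_{1},\dots,q_{s}$ with $q_{i}\equiv1\pmod p$ and with prescribed residues modulo $m_{1},\dots,m_{r}$ (Dirichlet), and put $n_{0}=p\,q_{1}\cdots q_{s}$. Then $\Q(\zeta_{n_{0}})^{+}/\Q(\zeta_{p})^{+}$ is abelian and tamely ramified essentially only at the $sg$ primes above $q_{1},\dots,q_{s}$, and its genus field $H$ — the maximal extension of $\Q(\zeta_{n_{0}})^{+}$ inside $\Q^{\mathrm{ab}}$ that is unramified over $\Q(\zeta_{n_{0}})^{+}$ — is, by construction, unramified and abelian over $\Q(\zeta_{n_{0}})^{+}$, so $\Gal(H/\Q(\zeta_{n_{0}})^{+})$ is a quotient of $Cl_{\Q(\zeta_{n_{0}})^{+}}$. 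Chevalley's ambiguous class number formula presents this genus group as a large product of inertia groups, essentially $\prod_{i}(\Z/(q_{i}-1)\Z)^{g-1}$ modulo the image of $\cO_{\Q(\zeta_{p})^{+}}^{\times}$, whose rank $(p-3)/2$ is fixed once $p$ is fixed. Taking $s$ large and distributing the congruence conditions appropriately, this genus group surjects onto $\Z/m_{j}\Z$ for every $j$ and onto $G$ jointly, so $Cl_{\Q(\zeta_{n_{0}})^{+}}\twoheadrightarrow G$, and we conclude by duality.

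The step I expect to be the real obstacle is the last one: arguing that the genus group has $G$ itself — not merely some proper quotient — as a quotient. Concretely one must choose the residues of the $q_{i}$ modulo the $m_{j}$ and run a rank (or linear-algebra mod each $m_{j}$) count showing that the bounded family of relations coming from the units of $\Q(\zeta_{p})^{+}$ cannot swallow the targeted cyclic quotients once the number of ramified primes is large; a convenient way to organize this is to enlarge $n_{0}$ one auxiliary prime at a time, realizing one factor $\Z/m_{j}\Z$ at each stage and preserving it across a totally ramified step before adding the next prime, which also keeps the ambiguous class number formula in its simplest (cyclic) form. The remaining points are routine: keep $n_{0}$ odd so that $2$ and the archimedean places never ramify (all fields in play are totally real, so there is no archimedean obstruction), check tameness of the $q_{i}$, absorb the mild extra ramification of $\Q(\zeta_{n_{0}})^{+}/\Q(\zeta_{p})^{+}$ above $p$, and — if one does not assume $h^{+}(\Q(\zeta_{p}))=1$ — carry the $Cl_{\Q(\zeta_{p})^{+}}$-term through the formula. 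As an alternative to the direct construction one could instead invoke Cornell's theorem that $G$ embeds into the class group of some cyclotomic field and then try to transfer this into the maximal real subfield and propagate it up a cyclotomic tower, but isolating the "plus part" is exactly the delicate point, so I would favour the genus-theoretic route sketched above.
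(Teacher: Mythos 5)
This survey states the theorem without proof --- it is quoted from Mishra--Schoof--Washington \cite{MR4270784} --- so I am judging your proposal on its own terms. Your architecture (realize $G$ as a quotient of the class group via an everywhere--unramified abelian extension, pass from quotients to subgroups by duality of finite abelian groups, propagate up the cyclotomic tower through totally ramified steps, and, crucially, observe that the sought unramified extension cannot be abelian over $\Q$, so that one must work with genus theory of $\Q(\zeta_{n_0})^+$ \emph{relative} to a small totally real base) is sound and is in the spirit of the cited source. Two of your assertions are nevertheless wrong as written. First, a totally ramified prime does \emph{not} force the capitulation map $Cl_{K}\to Cl_{L}$ to be injective: for $K=\Q(\sqrt{-5})\subset L=K(\sqrt{2})$ the prime above $2$ is totally ramified, generates $Cl_K\cong\Z/2\Z$, and becomes the principal ideal $(\sqrt2)$ in $L$. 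What you should use instead is the tool you already have in your first sentence: total ramification gives $H_K\cap L=K$ for the Hilbert class field $H_K$, so $\Gal(H_KL/L)\cong Cl_K$ is a quotient of $Cl_L$, and duality turns $G\le Cl_K$ into $G\le Cl_L$. (Alternatively, choose the tower prime $\ell$ coprime to $|G|$ and use that the norm composed with extension of ideals is multiplication by $[L:K]=\ell^k$.) Second, your genus field $H$ cannot be ``the maximal extension of $\Q(\zeta_{n_0})^+$ inside $\Q^{\mathrm{ab}}$ unramified over it'': by exactly the collapse you diagnose one sentence earlier, every abelian-over-$\Q$ extension unramified over $\Q(\zeta_{n_0})^+$ is essentially $\Q(\zeta_{n_0})^+$ itself. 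You must take the genus field of $\Q(\zeta_{n_0})^+/\Q(\zeta_p)^+$, i.e.\ the maximal unramified extension that is abelian over $\Q(\zeta_p)^+$ and genuinely non-abelian over $\Q$.

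The substantive gap is the step you yourself flag as the obstacle, because the scheme you propose for it does not work. If you adjoin the auxiliary primes one at a time so as to ``keep the ambiguous class number formula in its cyclic form,'' then at stage $i$ the base is $k_{i-1}=\Q(\zeta_{pq_1\cdots q_{i-1}})^+$ of degree $d$, the new prime $q_i$ (split completely) contributes $d$ totally ramified primes, but the unit group of $k_{i-1}$ has rank $d-1$, and Chevalley's formula only guarantees
\[
\#\{\text{ambiguous classes}\}\;=\;\frac{h_{k_{i-1}}\,(q_i-1)^{d}}{(q_i-1)\cdot[E_{k_{i-1}}:E_{k_{i-1}}\cap N]}\;\ge\;\frac{h_{k_{i-1}}}{2},
\]
since $[E_{k_{i-1}}:E_{k_{i-1}}\cap N]$ may be as large as $2(q_i-1)^{d-1}$: the growing unit rank of the base exactly cancels the new ramification, and you are not guaranteed to gain anything at any stage. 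The count has to be done in one shot over the \emph{fixed} base $k=\Q(\zeta_p)^+$, where the $sg$ ramified primes ($g=(p-1)/2\ge2$, each with inertia of order divisible by $m:=\mathrm{lcm}(m_j)$ once $q_i\equiv1\pmod{pm}$) face only $s$ relations from generating $\Gal(K/k)$, $g-1$ from the units, and a bounded number from $Cl_k$, the wild ramification at $p$, and the archimedean places; the genus number formula for abelian (not necessarily cyclic) extensions, e.g.\ Furuta's, then produces a quotient of $Cl_{\Q(\zeta_{n_0})^+}$ surjecting onto $(\Z/m\Z)^{s(g-1)-g-O_p(1)}$ and hence onto $G$ for $s$ large. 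With that replacement --- and no need for prescribed residues of the $q_i$ modulo the individual $m_j$, nor for $h^+(\Q(\zeta_p))=1$ --- your argument can be completed.
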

	\begin{remark}
		Analogous to \cref{Iizuka}, we can ask for a given finite abelian group $G$, whether it is possible to find $n$ such that $G$ is a subgroup of both the class groups of real cyclotomic fields $\Q(\zeta_n)^+$ and $\Q(\zeta_{n+1})^+$.
	\end{remark}
	
	\subsection{Function fields}
	Let $k=\F_q(t)$ be the rational function field over the field $\F_q$, where $q$ is a power of some rational prime $p$. Consider the polynomial $f(X)=X^3-uwX-u^2, u,w\in \F_q[t]$, where $u$ and $w$ are relatively prime and $\deg u<3\deg w $ or  $3\mid \deg u$.  Assume that $f(X)$ is irreducible over $k$. Let $d=4u^3w^3-27u^4$ be the discriminant of $f(X)$ and $g(X)=X^2+u^2X+u^3w^3+u^4$. For each global function field $K$, we denote by $h(K)$ the number of divisor classes of degree $0$ and call it the divisor class number of $K$ for brevity. 
	
	Li and Zhang \cite{zhang}  proved the following, 
	\begin{enumerate}
		\item  For  $p>3$,  if $d$ is not a square in $k$, then $3$ divides the divisor class number of  $ k(\sqrt{d})$.  Conversely, every quadratic function field whose divisor class number is divisible by $3$ is given in this way by some $u$ and $w$. 
		\item When $p=2$, if $g(X)$ is irreducible over $k$, let $K$ be the quadratic function field generated by the roots of $g(X)=0$ .Then $3$ divides the divisor class number of  $ h(K)$. Conversely, every quadratic function field whose divisor class number is divisible by $3$ is given in this way by suitable $u$ and $w$.  
		\end {enumerate}
		
		\begin{remark}
			Iizuka used the characterization of the quadratic fields whose class numbers are divisible by $3$ to construct infinitely many $d$ such that the class numbers of $\Q(\sqrt{d}), \  \Q(\sqrt{d+1})$ are divisible by $3$. Similarly, one can use the characterization of function fields to prove analogous results on  $3$-divisibility  of divisor class numbers of the quadratic extensions of functional fields. 
			%That is given any $n$ is it possible to find infinitely many $d\in\mathbb{F}_q[t]$ such that divisor class numbers of $\mathbb{F}_q(t)(\sqrt{d}), \ \mathbb{F}_q(t)(\sqrt{d+1})$ are divisible by $3$. After proving this, one can proceed further to the analogous of Iizuka's conjecture in function fields for the divisibility of divisor class numbers.
		\end{remark}

		\bibliographystyle{amsplain}

	\end{document}